\newcommand{\bpf}{\begin{proof}}
\newcommand{\epf}{\end{proof}}
\newcommand{\beq}{\begin{equation}}
\newcommand{\eeq}{\end{equation}}
\newcommand{\beqn}{\begin{eqnarray*}}
\newcommand{\eeqn}{\end{eqnarray*}}
\newcommand\diam{\mathop{\rm diam}\nolimits}
\newcommand\tr{\mathop{\rm tr}\nolimits}
\def\circledwedge{\setbox0=\hbox{$\bigcirc$}\relax \mathbin {\hbox
to0pt{\raise.5pt\hbox to\wd0{\hfil $\wedge$\hfil}\hss}\box0 }}
\newtheorem{prop}{Proposition}[section]
\newtheorem{theo}[prop]{Theorem}
\newtheorem{lemm}[prop]{Lemma}
\newtheorem{coro}[prop]{Corollary}
\newtheorem{rema}[prop]{Remark}
\def\begeq{\begin{equation}}
\def\endeq{\end{equation}}
\def\p{\partial}
\def\tr{{\rm tr}}
\def \ds{\displaystyle}
\def\div{{\rm div\,}}
\def\odot{\setbox0=\hbox{$\bigcirc$}\relax \mathbin {\hbox to0pt{\raise.5pt\hbox to\wd0{\hfil $\wedge$\hfil}\hss}\box0 }}
\numberwithin{equation} {section}
\def\tilde{\widetilde}
\begin{document}

\title[Conformally compact Einstein manifolds] {Compactness of conformally compact Einstein $4$-manifolds II}

\author{Sun-Yung A. Chang}
\address{Department of Mathematics, Princeton University, Princeton, NJ 08544, USA}
\email{chang@math.princeton.edu}

\author{Yuxin Ge}
\address{IMT,
Universit\'e Toulouse 3 \\118, route de Narbonne
31062 Toulouse, France}
\email{yge@math.univ-toulouse.fr}

\author{Jie Qing}
\address{Mathematics Department, UCSC, 1156 High Street, Santa Cruz, CA 95064 USA }
\email{qing@ucsc.edu}

\begin{abstract} In this paper, we establish a number of compactness results for some class of conformally compact Einstein $4$-manifolds. 
In the first part of the paper, we improve the earlier compactness results obtained by Chang-Ge  \cite{CG}. In the second part of the paper, as applications, we
derive some further compactness results under the perturbation condition when the $L^2$ norm of the Weyl curvature is small. 
We also derive the global uniqueness of conformally compact Einstein metrics on the 4-Ball constructed in the earlier work of Graham-Lee \cite{GL}.
\end{abstract}

\thanks{Research of Chang is supported in part by NSF grant MPS-1509505}
\thanks{Research of Qing is supported in part by NSF grant MPS-1608782}

\subjclass[2000]{}

\keywords{}

\maketitle

\section{Introduction}\label{introduction and statement of results}

\subsection{Statement of Improved results}

Let $X^4$ be a smooth 4-manifold with boundary $\partial X$.
A smooth conformally compact metric $g^+$ on $X$ is a Riemannian metric such that $g = r^2 g^+$ extends smoothly to the 
closure $\overline{X}$ for some defining function $r$ of the boundary $\partial X$ in $X$. A defining function $r$ is a smooth 
nonnegative function on the closure $\overline{X}$ such that $\partial X = \{r=0\}$ and $d r \neq 0$ on $\partial X$. A 
conformally compact metric $g^+$ on $X$ is said to be conformally compact Einstein (CCE) if, in addition, 
$$
\operatorname{Ric} [g^+] = - n g^+.
$$
The most significant feature of CCE manifolds $(X, \ g^+)$ is that the metric $g^+$ is ``canonically" associated with the conformal
structure $[\hat g]$ on the boundary at infinity $\partial X$, where $\hat g = g|_{T\partial X}$. $(\partial X, \ [\hat g])$ is called the conformal infinity of a conformally 
compact manifold $(X, \ g^+)$. It is of great interest in both mathematics and theoretic physics to understand the correspondences between 
conformally compact Einstein manifolds $(X, \ g^+)$ and their conformal infinities $(\partial X, \ [\hat g])$, especially due to the AdS/CFT
correspondence in theoretic physics (cf. Maldacena \cite{Mald-1}, \cite{Mald-2} and  Witten  \cite{Wi}). \\ 

The project we work on is to address the compactness issue of given a sequence
of CCE manifolds $(X^4, M^3, \{g_i^{+}\})$ with $M = \partial X$ and $\{ g_i \} = \{ r_i^2 g_i^{+} \} $ a sequence of compactified metrics, denote
${\hat g}_i = g_i |_M$,  assume $\{{\hat g}_i\}$ forms a compact family of metrics in $M$, when is it true that some representatives ${\bar g_i} \in  [g_i]$
with $\{ {\bar g}_i |_M\} = \{{\hat g}_i\} $ also forms a compact family of metrics in $X$?
We remark that, for a CCE manifold, given any conformal infinity, a special defining function which we call geodesic defining function $r$ exists 
so that $ | \nabla_{\bar g} r | \equiv 1 $ in an asymptotic neighbor $M \times [0, \epsilon)$ of $M$.     
We also remark that the eventual goal to study the compactness problem is to show existence of conformal filling in for some classes of
Riemannian manifolds as conformal infinity.
\\

One of the difficulty to address the compactness problem is due to the existence of some ``non-local" term. To see this, we look at  
 the asymptotic behavior of the compactified metric $g$ of CCE manifold $(X^{n+1} ,M^n, g^{+})$ with conformal infinity $(M^n, \hat g )$ (\cite{G00}, \cite{FG12}) which in the special case when $n=3$ takes the form
$$  
 g:=r^2 g^{+} = h + g^{(2)} {r}^2 + g^{(3)} r^3 + g^{(4)} {r}^4 + \cdot \cdot \cdot \cdot 
$$
on an asymptotic neighborhood of $M \times (0, \epsilon)$, where $r$ denotes the geodesic defining function of $g$. It turns out
$g^{(2)} = - \frac{1}{2}A_{\hat g} $, where $A_{\hat g} := \frac {1}{n-2} ( Ric_{\hat g} - \frac {1}{2(n-1)} R_{\hat g}) $ denotes the Schouten tensor, Ric the Ricci tensor and R 
the scalar curvature respectively for the metric $\hat g$. Thus $g^{(2)}$ is  determined by $\hat g$ (we call such terms local terms), $Tr_{\hat g}  g^{(3)} =0 $, while 
$$  
g^{(3)} _{\alpha, \beta} = - \frac {1}{3} {\frac {\partial}{\partial n }{(Ric_{g})}_{\alpha, \beta} } $$
where $\alpha, \beta$ denote the tangential coordinate on $M$, is a non-local term which is not determined by the boundary metric $\hat g$.   
We remark that $\hat g $ together with $g^{(3)}$ determine the asymptotic behavior of $g$ (\cite {FG12}, \cite{Biquard}). \\

In an earlier work of Chang-Ge \cite{CG}, for a CCE manifold $(X^4, M^3, g^{+})$, we introduce the notion of
2-tensor $S$ which on a 4-manifold $(X^4, \ g)$ with totally geodesic boundary takes the form:
$$
S[g] := \frac12 \partial_{\bf n} \operatorname{Ric} - \frac{1}{12} \partial_{\bf n} R\, g,
$$
where ${\bf n}$ is the outward unit normal of the 
boundary under the metric $g$. The 2-tensor $S$ is conformally invariant in the sense that
$$
S[r^2 g]=r^{-1} S[g].
$$
The connection of the $S$ tensor to that of $g^{(3)}$ is that (see  (2.7), Remark 2.1 in \cite{CG}) 
\begin{equation}
\label{g3}
S_{\alpha, \beta} = - \frac{3}{2} g^{(3)}_{\alpha, \beta}. 
\end{equation} \\

In  \cite{CG}, we have also considered a special choice of compactification $g^{*}$,  which we named the Fefferman-Graham's (FG) compactification, defined
by solving the PDE:
\begin{equation}
\label{fg}
- \Delta_{g^{+}} w = 3  \,\,\,\, on  \,\,\, X^4.
\end{equation}
$g^{*} : = e^{2w} g^+ $ with $g^{*} |_M = g^Y$, the Yamabe metric on the conformal infinity of $(X^4, g^+)$. \\ 

We now state the first result of this paper.

\begin{theo} 
\label{maintheorem1}
Suppose that $X$ is a smooth oriented 4-manifold with boundary $\partial X=\mathbb{S}^3$. Let $\{g_i^+\}$ be a set of conformally compact Einstein metrics on $X$. Assume the following conditions:

\begin{enumerate}
\item \label{Con:1}The set $\{\hat g_i\}$ of Yamabe metrics that represent the conformal infinities lies in a given set $\mathcal{C}$ 
of metrics that is of positive Yamabe type and compact in $C^{k,\alpha}$ Cheeger-Gromov topology with $k \ge 3$ and with some $\alpha\in (0,1)$.
\item \label{Con:2} The FG  compactifications $\{ g^{*}_i = \rho_i^2 g^+_i\}$ associated with the Yamabe representatives $\{\hat g_i\}$ 
on the boundary satisfies:
$$
\lim_{r\to 0}\sup_i\sup_{x\in \partial X}\oint_{B(x,r)} |S_i|[g^{*}_i] d\text{vol}[\hat g_i] =0
$$
\item \label{Con:3} $
H_2(X, \mathbb{Z})=0$.
\end{enumerate}
Then, the set $\{g^{*}_i\}$ of FG compactifications (after diffeomorphisms that fix the boundary) 
forms a compact family in the $C^{k,\alpha}$ Cheeger-Gromov topology.
\end{theo}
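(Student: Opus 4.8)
The plan is to argue by contradiction, combining uniform a priori integral bounds, an $\varepsilon$-regularity theorem valid up to the boundary, and a topological obstruction supplied by condition (3).

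\textbf{A priori bounds.} Since the Weyl tensor is conformally invariant, $\int_X |W[g_i^*]|^2\,dv_{g_i^*} = \int_X |W[g_i^+]|^2\,dv_{g_i^+}$. Applying the Chern--Gauss--Bonnet formula to the Einstein metric $g_i^+$ on the fixed manifold $X$ expresses the topological constant $\chi(X)$ in terms of $\int_X |W[g_i^+]|^2$ and the renormalized volume; since condition (1) controls the boundary data and thereby the renormalized volume of these CCE $4$-manifolds, I obtain a uniform bound on $\int_X |W[g_i^*]|^2$. Combining this with the Einstein equation $\mathrm{Ric}[g_i^+] = -3\,g_i^+$ and the defining equation $-\Delta_{g_i^+} w_i = 3$ of the FG compactification controls the full curvature tensor $\mathrm{Rm}[g_i^*]$ in $L^2$, while the positive Yamabe type assumption in condition (1) furnishes uniform volume and noncollapsing bounds.

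\textbf{Boundary $\varepsilon$-regularity.} Next I would prove that there is $\varepsilon_0>0$ so that $\int_{B(x,r)}|\mathrm{Rm}[g_i^*]|^2 < \varepsilon_0$ forces a pointwise bound on $|\mathrm{Rm}[g_i^*]|$ on $B(x,r/2)$, uniformly in $i$, both in the interior and near $\partial X = \mathbb{S}^3$. The delicate case is the collar $M\times[0,\epsilon)$, where $g_i^*$ has a Fefferman--Graham expansion whose first nonlocal coefficient is $g^{(3)}$; by \eqref{g3} this equals $-\tfrac{2}{3}S_i$. Condition (2) is precisely a uniform no-concentration hypothesis on $|S_i|$ in the $L^1$ sense over boundary balls, which I would use to show that $g^{(3)}$ cannot concentrate. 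Together with the $C^{k,\alpha}$ control of $\hat g_i$ coming from condition (1), this pins the geometry in a fixed collar, so that every possible loss of compactness is pushed into the interior of $X$.

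\textbf{Blow-up and orbifold limit.} If $\{g_i^*\}$ were not precompact in $C^{k,\alpha}$, then after applying diffeomorphisms fixing the boundary and passing to a subsequence, the standard four-dimensional orbifold compactness theory (Anderson, Bando--Kasue--Nakajima) yields convergence of $(X,g_i^*)$ away from finitely many interior concentration points to a limit orbifold, and rescaling at each such point produces a complete, nonflat Ricci-flat ALE orbifold bubble, the Einstein constant $-3$ scaling to $0$ under the blow-up.

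\textbf{Topological exclusion of bubbles (the main obstacle).} The heart of the argument, and where condition (3) enters, is ruling out these bubbles. Every nonflat Ricci-flat ALE $4$-orbifold has $b_2\ge 1$, the flat quotient $\mathbb{R}^4/\Gamma$ being the only such space with $b_2=0$; a bubble forming at an interior point would inject an essential $2$-cycle into $H_2(X,\mathbb{Z})$, contradicting $H_2(X,\mathbb{Z})=0$. Hence no curvature concentration occurs, the convergence is genuine $C^{k,\alpha}$ convergence with no orbifold points, and $\{g_i^*\}$ is compact in the $C^{k,\alpha}$ Cheeger--Gromov topology. I expect the principal difficulty to be the boundary $\varepsilon$-regularity and the use of condition (2) to tame the nonlocal term $g^{(3)}$; the interior bubbling and its exclusion via $H_2(X,\mathbb{Z})=0$ are comparatively standard once the collar is controlled.
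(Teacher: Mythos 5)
Your overall architecture (blow-up by contradiction, $\varepsilon$-regularity up to the boundary, topological exclusion of interior bubbles via condition (3)) points in the right direction, but two of your steps have genuine gaps. First, the claim that conditions (1) and (2) ``pin the geometry in a fixed collar, so that every possible loss of compactness is pushed into the interior'' is unsubstantiated and is not how condition \eqref{Con:2} can be used: an $L^1$ no-concentration hypothesis on $|S_i|$ over boundary balls gives no pointwise or collar-size control of $g^{(3)}$ or of the metric. In the paper, boundary blow-up is treated head-on: one rescales at (near-)maximal curvature points, and condition \eqref{Con:2} enters only at the rescaled level, forcing the $S$-tensor of the limit to vanish, after which Biquard's unique continuation theorem identifies the limit $g^+_\infty$ as hyperbolic and the limit compactification as the Euclidean half-space, contradicting $|Rm_{g_\infty}|(p_\infty)=1$. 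Moreover, your $\varepsilon$-regularity step (the analogue of Lemma \ref{epsilonregularity2}) requires Sobolev and trace Sobolev inequalities with uniform constants, which in turn require injectivity radius bounds that are not a priori available; the paper breaks this circularity with separate ``blow-up before blow-up'' injectivity radius estimates (Lemmas \ref{bdy-injrad} and \ref{int-injrad}, the latter resting on the Li--Qing--Shi relative volume estimate and the behavior of the defining functions $\rho_i$), which your noncollapsing assertion from ``positive Yamabe type'' silently replaces. You also never establish a uniform diameter bound, without which Cheeger--Gromov compactness of the family does not follow even after curvature and injectivity bounds; this is a nontrivial five-step argument in the paper (Lemma \ref{diameter}), based on integrating $-\Delta\log\rho_i$ to get $\int \rho_i^{-2}\le C$ and playing this against the Euclidean volume growth $\mathrm{vol}(B(p,r))\ge c_v r^4$.

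Second, your topological exclusion rests on the assertion that every nonflat Ricci-flat ALE $4$-orbifold has $b_2\ge 1$, so that a bubble injects an essential $2$-cycle into $H_2(X,\mathbb{Z})$. This is not a theorem: the classification of Ricci-flat ALE $4$-manifolds is open outside the (hyper)K\"ahler case, and for a bubble asymptotic to $\mathbb{R}^4/\Gamma$ with $\Gamma$ nontrivial (the $\Gamma=\{1\}$ case being flat by Bishop--Gromov) one cannot currently rule out trivial second homology, e.g.\ when $\mathbb{S}^3/\Gamma$ is a homology sphere. The paper's route is different and avoids this: rescaling at maximal curvature produces a smooth (not orbifold) complete Ricci-flat limit, Cheeger--Naber gives the ALE structure, the hypotheses $\partial X=\mathbb{S}^3$ and $H_2(X,\mathbb{Z})=0$ make the doubling a homology sphere, the Crisp--Hillman embedding theorem then restricts $\Gamma$ to $\{1\}$, $Q_8$, or a perfect group, and these surviving cases are eliminated by the Chern--Gauss--Bonnet and signature formulas for ALE spaces rather than by a $b_2$ count (Lemma \ref{Lem:curv-estimate}). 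Your $L^2$ a priori bound paragraph also glosses a real point: bounding $\int|W|^2$ from above via Gauss--Bonnet requires a lower bound on the renormalized volume, which condition (1) does not obviously supply, and for the non-Einstein compactified metric $g_i^*$ an $L^2$ bound on $W$ does not by itself bound $Rm$ in $L^2$.
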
 
\vskip .1in
%

We now explain the connection of the $S$ tensor to other scalar curvature invariants for the metric $g^{*}$, which plays a key role in the results in \cite{CG} and in this
paper.\\

On a 4-manifold $(X^4, \  g)$, a 4-th order curvature called the   
$Q$-curvature is defined as:
\beq\label{Q-curvature}
Q[g] :=-\frac16 \triangle R-\frac12|Ric|^2+\frac16 R^2.
\eeq
$Q$ curvature is naturally associated with a $4$th-order differential operator:  
$$
P[g] :=( \triangle)^2-\div[(\frac23 R g-2 Ric)\nabla]
$$
called Paneitz operator \cite{Pa, BCY}. We remark that Paneitz operator is a special case of the family of GJMS (\cite{GJMS92}) operators.
The relation of the pair $\{Q, P\}$ in 4 dimensions is like that of  the well known pair $\{K, -\Delta\}$ in 2 dimensions, where $K$ denotes the Gaussian curvature:
 $$ 
- \Delta [g] + K[g] = K{e^{2w} g} e^{2w} \,\,\,\, {\text on \,\,\, X^2},
$$
$$
P[g] w + Q[g]=Q[e^{2w}g] e^{4w} \,\,\,\, {\text on \,\,\, X^4}
$$
for conformal changes of the metric.  For a 4-manifold $(X^4, \ g)$  with boundary, in the earlier works of Chang-Qing \cite{CQ1, CQ2}, 
in connection with the 4th order $Q$ curvature, a 3rd order  
"non-local" boundary curvature $T$ was introduced  on $ \p X$ to study the boundary behavior of $g$. The relation between the pair $(Q, T)$ is a 
generalization of that of the Dirichlet-Neumann pair $(- \Delta, \partial_{\bf n})$. The expression of $T$ curvature is in general complicated, but in the special case 
when $g$ is totally geodesic,  the expression $T$ take the simple form:
\beq\label{T-curvature}
T[g]:=\frac{1}{12}\frac{\p R}{\p {\bf n}}.
\eeq 
\\

We now state the second result of our paper.

\begin{theo} \label{maintheorem2}
Suppose that $X$ is a smooth oriented  4-manifold with boundary $\partial X=\mathbb{S}^3$. Let $\{g_i^+\}$ be a set of conformally 
compact Einstein metrics on $X$. Assume the same conditions \eqref{Con:1} and \eqref{Con:3} as in Theorem \ref{maintheorem1} and 
\begin{enumerate}
\item[]($2^\prime$) For the associated Fefferman-Graham's compactifications $\{g_i^{*} = e^{2w_i}  g^+_i\}$ with the Yamabe representatives $\{\hat g_i\}$ on the
boundary,  
$$
\liminf_{r\to 0}\inf_i\inf_{x\in \partial X}\oint_{B(x,r)}T[g_i^{*} ] d\text{vol}[\hat g_i] \ge 0.
$$
\end{enumerate}
Then, the set $\{g_i^{*} \}$ is compact in $C^{k,\alpha}$ norm 
up to diffeomorphisms that fix the boundary, provided $k \ge 7$.
\end{theo}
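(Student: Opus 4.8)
The plan is to reduce Theorem \ref{maintheorem2} to Theorem \ref{maintheorem1} by showing that the hypotheses \eqref{Con:1}, \eqref{Con:3} together with the sign condition $(2^\prime)$ force the $S$-tensor of the FG compactifications not to concentrate, so that the effective content of condition \eqref{Con:2} is recovered. The starting point, exactly as for Theorem \ref{maintheorem1}, is that the total Weyl energy is uniformly controlled: since $|W|^2\,dV$ is a pointwise conformal invariant in dimension $4$, one has $\int_X |W[g_i^{*}]|^2\,dV[g_i^{*}] = \int_X |W[g_i^{+}]|^2\,dV_{g_i^{+}}$, and by the Chern-Gauss-Bonnet/Anderson identity for CCE manifolds the latter is bounded in terms of $\chi(X)$ and the renormalized volume. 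Condition \eqref{Con:3} with $\partial X=\mathbb{S}^3$ pins down the topology (so $\chi(X)=1$), and condition \eqref{Con:1} controls the renormalized volume; hence $\int_X|W|^2$ is uniformly bounded and curvature can concentrate only at finitely many points. The whole problem is to exclude such concentration.

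The bridge between $T$ and $S$ is the Chang-Qing Gauss-Bonnet formula on a $4$-manifold with totally geodesic boundary; the FG compactification is totally geodesic because the CCE expansion has vanishing linear term $g^{(1)}=0$, which is also what makes the simplified expressions for $S$ and $T$ valid. Using conformal invariance of the Weyl term, this reads, schematically,
\[ \int_X Q[g_i^{*}]\,dV[g_i^{*}] + \oint_{\partial X} T[g_i^{*}]\,d\sigma[\hat g_i] = 8\pi^2\chi(X) - \frac14\int_X |W[g_i^{+}]|^2\,dV_{g_i^{+}}. \]
Thus the combined total $(Q,T)$-mass is uniformly bounded \emph{above}. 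Condition $(2^\prime)$ supplies the complementary lower control: it forbids $T[g_i^{*}]$ from being asymptotically negative on small balls, so that, together with the bound above, $\oint_{B(x,r)}T[g_i^{*}]\,d\sigma[\hat g_i]$ is uniformly controlled and $T$ cannot concentrate.

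The main obstacle is to convert non-concentration of the scalar $T=\frac{1}{12}\partial_{\bf n}R$ into non-concentration of the trace-free, divergence-free tensor $S$ (equivalently $g^{(3)}$, via \eqref{g3}). This link is genuinely subtle: in the geodesic gauge the leading contribution of $g^{(3)}$ to $\partial_{\bf n}R$ cancels by trace-freeness, so the true content lives in how the FG gauge $g^{*}=e^{2w}g^{+}$, with $-\Delta_{g^{+}}w=3$, relates to the geodesic gauge through the full Fefferman-Graham expansion. I would carry out the conversion by a blow-up analysis at a putative concentration point $x_0\in\partial X$: rescaling the $g_i^{*}$ produces a nontrivial ``bubble'' which, since the Weyl energy is conformally invariant and the boundary data $\hat g_i$ converge in $C^{k,\alpha}$ by \eqref{Con:1}, must be a CCE model carrying definite $\int Q$ and $\oint T$. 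The sign condition $(2^\prime)$ forces the boundary contribution of such a bubble to be nonnegative while the Gauss-Bonnet identity caps the total, and a Pohozaev-type identity relating $\oint|S|^2$ to $\oint T$ and the bulk curvature then forces the bubble to be trivial, a contradiction. This is the delicate step, and the derivative loss it incurs — passing from the third-order quantity $\partial_{\bf n}R$ through the expansion to $g^{(3)}$ and the metric, coupled with the fourth-order Paneitz/$Q$-curvature analysis — is precisely what forces the stronger regularity hypothesis $k\ge 7$ in place of $k\ge 3$.

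Once concentration of both curvature and $S$ is excluded, the remainder is identical to Theorem \ref{maintheorem1}: interior and boundary $\epsilon$-regularity for the compactified Einstein metrics yield uniform pointwise curvature bounds, elliptic regularity for the Einstein system in geodesic normal and harmonic coordinates upgrades these to uniform $C^{k,\alpha}$ bounds with boundary values fixed by $\{\hat g_i\}$, and the Cheeger-Gromov compactness theorem then produces a convergent subsequence up to diffeomorphisms fixing the boundary.
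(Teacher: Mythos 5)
There is a genuine gap at the pivotal step of your argument. Your plan is to recover condition \eqref{Con:2} of Theorem \ref{maintheorem1} from ($2^\prime$) by showing that $|S|$ cannot concentrate in $L^1$ on small boundary balls, with the conversion from $T$ to $S$ effected by an unstated ``Pohozaev-type identity relating $\oint|S|^2$ to $\oint T$ and the bulk curvature.'' No such identity appears in the paper or in \cite{CG}, and simple linear algebra shows why none of this form can do the job: with ${\bf n}$ the outward normal and the boundary totally geodesic, the trace of $S=\frac12\partial_{\bf n}\operatorname{Ric}-\frac1{12}\partial_{\bf n}R\,g$ is $\frac12\partial_{\bf n}R-\frac13\partial_{\bf n}R=\frac16\partial_{\bf n}R=2T$, while the tangential part of $S$ is $-\frac32 g^{(3)}$ by \eqref{g3} and $g^{(3)}$ is trace-free. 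So $T$ sees only the pure-trace part of $S$; a one-sided average condition on the scalar $T$ carries no information whatsoever about the trace-free tensor $g^{(3)}$, and hence cannot yield $L^1$ smallness of $|S|$ on small balls. Since condition \eqref{Con:2} is exactly a non-concentration statement about the full tensor $S$, the reduction you propose is the entire difficulty, not a bookkeeping step, and as formulated it would fail. (A secondary inaccuracy: your opening claim that $\int_X|W|^2$ is uniformly bounded via Gauss--Bonnet requires a uniform \emph{lower} bound on the renormalized volume $V$, which condition \eqref{Con:1} does not supply; the paper makes no such global Weyl bound at this stage.)

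What the paper actually does is run the same blow-up scheme as for Theorem \ref{maintheorem1} (the curvature bound of Lemma \ref{Lem:curv-estimate} together with the injectivity radius estimates of Lemmas \ref{bdy-injrad} and \ref{int-injrad}, which supply the Sobolev inequalities needed for the $\epsilon$-regularity Lemmas \ref{epsilonregularity1}--\ref{epsilonregularity2} --- this removal of the old hypotheses (4),(5) is the new content here), and then it uses ($2^\prime$) \emph{directly in the blow-up limit}, deferring details to \cite{CG}. The mechanism is a sign argument, not an integral identity: after rescaling at a boundary blow-up point the boundary metrics flatten, so $R[\bar g_i]|_{\partial X}=3\hat R[\hat{\bar g}_i]\to 0$, while $R>0$ in the interior by Lemma \ref{lem4.1}; hence in the limit $\partial_{\bf n}R\le 0$, i.e.\ $T\le 0$ pointwise on the boundary. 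On the other hand $\oint_{B(x,r)}T\,d\mathrm{vol}[\hat g]$ is scale-invariant, so ($2^\prime$) passes to the limit and forces $T_\infty\equiv 0$; this, fed through the $Q$-flat scalar curvature equation \eqref{Q-eq} and the identities \eqref{relation3}--\eqref{relation4} for $\rho$, rigidifies the limit (flat half-space, $g^+_\infty$ hyperbolic), contradicting the normalization $|Rm|(p_\infty)=1$. In short: the paper eliminates bubbles by exploiting that ($2^\prime$) combined with $R[g^*]>0$ is already rigid at the blow-up limit, whereas your route needs a quantitative $T$-to-$S$ bound that does not exist. Your guess about the provenance of $k\ge 7$ (derivative loss from the third-order $T$ in the blow-up/convergence analysis) is plausible but is asserted, not derived, in both your sketch and the paper.
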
 
\vskip .1in
In the earlier work of Chang-Ge \cite{CG}, both Theorem \ref{maintheorem1} and  Theorem \ref{maintheorem2}  were established under the extra conditions 
on the uniform bounds of the first and second Yamabe constants for manifolds with boundary.  The conditions are:
\\

\begin{enumerate}
\item[](4) There exists some positive constant $C_5>0$ such that the first Yamabe constant for the compactified metric $g_i:=\rho_i^2 g_i^+$ is bounded uniformly from below by $C_2$
i.e.
$$
Y(X, \partial X,  [g_i]) :=\inf_{U\in C^1}\frac{\ds\int_X (|\nabla U|^2 +\frac16 R[g_i] U^2) d\text{vol}[g_i]}{\ds\left(\int_X U^4 d\text{vol}[g_i] \right)^{\frac{1}{2}}}  \ge C_5,
$$
where $R[g_i]$ is the scalar curvature of $g_i$.
\item[](5)  There exists some positive constant $C_6>0$ such that the second Yamabe constant for the metric $g_i$ is bounded uniformly from below by $C_6$, i.e. 
$$
Y_b(X, \partial X, [g_i]) :=\inf_{U\in C^1}\frac{\ds\int_X (|\nabla U|^2 +\frac16 R[g_i] U^2) d\text{vol}[g_i]}{\ds\left(\oint_{\partial X} U^3 d\text{vol}[\hat g_i] \right)^{\frac{2}{3}}}\ge C_6,
$$
where $\hat g_i = g_i|_{T\partial X}$.
\end{enumerate}
We remark Condition \eqref{Con:1} in the earlier work \cite[Theorem 1.1]{CG} is stated slightly weaker, that is, the Yamabe 
constant $Y(\partial X, [\hat g_i])$ is assumed to be non-negative.
\\

In the current paper we managed to drop both conditions (4) and (5) from the statements of Theorems \ref{maintheorem1} and \ref{maintheorem2}, 
which is accomplished  by applying another round of blow-up analysis to reduce the situation to the earlier version of the theorems in \cite{CG}. We will present the proof
in section 3 of this paper.   Once the curvature of metric $g_i^*$ is bounded, we will prove the diameter is uniformly bounded in section 4 by  some new arguments.



\subsection{Statement of new results}

Due to the nature of the problem in the CCE setting, natural conditions to imply the compactness of the solutions should be conformally invariant conditions, condition (1) 
in the statements of Theorems \ref{maintheorem1} and \ref{maintheorem2} is conformally invariant, condition (3) is topologically invariant hence a conformal invariant, but unfortunately the condition (2) in Theorem \ref{maintheorem1}  
and condition (2') in Theorem \ref{maintheorem2} are not. It is in this direction we now have new results where compactness is reached under 
some conformally invariant conditions; as a consequence we also establish some "uniqueness" result of conformal filling in for a special class of CCE manifolds with given conformal infinity. \\ 

\begin{theo} \label{maintheorem3}
Suppose that $X$ is a smooth oriented  4-manifold with boundary $\partial X=\mathbb{S}^3$. Let $\{g_i^+\}$ be a set of conformally 
compact Einstein metrics on $X$. Assume the same condition \eqref{Con:1} in Theorem \ref{maintheorem1}. Then there is $\delta_0>0$
such that if either 
\begin{enumerate}
\item[]($2^{\prime\prime}$) \quad  \quad $\int_{X^4} (|W|^2dvol)[g^+_i] < \delta_0$, \quad \quad \quad 
or \\ 
 \item[]($2^{\prime\prime\prime}) $  \quad \quad  $Y(\partial X, [\hat g_i]) \geq Y(\mathbb{S}^n, [g_{\mathbb{S}}]) - \delta_0, $ 
\end{enumerate}
then the set $\{g_i^*\}$ of the FG compactifications (after diffeomorphisms that fix the boundary) 
is compact in $C^{k,\alpha}$ Cheeger-Gromov topology for some $k \geq 3$.
 \\
 
In fact, for $\epsilon >0$ and $\alpha\in (0, 1)$, there is $\delta >0$, if $g^+$ is a conformally compact Einstein metric on $X^4$ with the 
conformal infinity $(\partial X, [\hat g])$ and $g^*$ is the FG compactification associated with the Yamabe representative  
that belongs the set $\mathcal{C}$ in (1) in Theorem \ref{maintheorem1}, and if ($2^{\prime\prime}$) (or ($2^{\prime\prime\prime}$)) holds for some $\delta\leq  \delta_0 \, $, then 
there is a diffeomorphism 
$$
\phi_i: \bar X^4\to \bar B^4 \text{ and }\phi_i=Id: \partial B^4=\mathbb{S}^3\to \partial X^4=\mathbb{S}^3
$$ 
satisfying
\begin{equation}\label{Equ:close}
\|\phi_i^*g_i^* - g_{FG}\|_{C^{k, \alpha}(\bar B^4)} < \epsilon
\end{equation}
where $g_{FG}$ is the FG compactification of the hyperbolic metric associated with a round metric on $\mathbb{S}^3$. 
\end{theo}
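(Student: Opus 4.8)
The plan is to derive Theorem \ref{maintheorem3} from Theorem \ref{maintheorem1} by showing that each of the conformally invariant hypotheses $(2'')$ and $(2''')$ forces the two remaining hypotheses of that theorem --- the topological condition \eqref{Con:3} and the non-local averaged smallness (2) of the $S$-tensor --- and then reading off the quantitative estimate \eqref{Equ:close} from the resulting compactness by a contradiction argument anchored in a rigidity statement. The organizing analytic identity is the renormalized Gauss--Bonnet formula for CCE $4$-manifolds,
\[
\chi(X)=\frac{1}{8\pi^2}\int_X |W|^2\,dV_{g^+}+\frac{3}{4\pi^2}\,V,
\]
with $V$ the renormalized volume, together with the sharp bound $V\le \frac{4\pi^2}{3}$ (equality only for the hyperbolic ball).

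First I would fix the topology. Since each $g_i^+$ is complete with $\operatorname{Ric}=-3g_i^+<0$, a Bochner argument kills harmonic $1$-forms so $b_1(X)=0$, and Lefschetz duality with $\partial X=\mathbb{S}^3$ (whose fundamental class is nullhomologous in $X$) gives $b_3(X)=b_4(X)=0$; hence $\chi(X)=1+b_2(X)\ge 1$. Under $(2'')$ the Weyl term above is at most $\delta_0/8\pi^2$ while $0<\frac{3}{4\pi^2}V\le 1$, so $1\le \chi(X)\le 1+\delta_0/8\pi^2<2$ once $\delta_0$ is small, forcing $\chi(X)=1$, i.e. $b_2(X)=0$; with $\pi_1(X)=0$ in this regime this upgrades to $H_2(X,\mathbb{Z})=0$, which is \eqref{Con:3}. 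For $(2''')$ I would first convert the hypothesis into the same smallness of Weyl energy: by the sharp relationship between the renormalized volume and the Yamabe invariant of the conformal infinity, $Y(\partial X,[\hat g])\to Y(\mathbb{S}^3,[g_{\mathbb{S}}])$ forces $V\to \frac{4\pi^2}{3}$, and the Gauss--Bonnet identity then makes $\int_X|W|^2$ small, so $(2''')$ reduces to the regime of $(2'')$.

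Next I would verify condition (2). Because $S=-\tfrac32 g^{(3)}$ by \eqref{g3} and $g^{(3)}$ is the normal derivative of the boundary Ricci tensor, the local averages $\oint_{B(x,r)}|S_i|$ measure the failure of $g_i^+$ to be hyperbolic near $M$; an $\varepsilon$-regularity estimate for the Einstein metrics $g_i^*$, fed by the smallness of the conformally invariant energy $\int_X|W|^2$ and by the compactness of the boundary data $\{\hat g_i\}$ in condition \eqref{Con:1}, shows that no curvature concentrates near $M$, so the Fefferman--Graham data $g^{(3)}$ --- and hence $\oint_{B(x,r)}|S_i|$ --- are uniformly small, establishing (2). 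Theorem \ref{maintheorem1} then yields $C^{k,\alpha}$ Cheeger--Gromov compactness of $\{g_i^*\}$. For the quantitative estimate \eqref{Equ:close} I would argue by contradiction: were it to fail, there would be $\epsilon>0$ and a sequence with $\delta_i\to 0$ but $\|\phi^*g_i^*-g_{FG}\|_{C^{k,\alpha}}\ge\epsilon$ for every boundary-fixing $\phi$; by the compactness just proved a subsequence converges in $C^{k,\alpha}$ to the FG compactification of a limiting CCE metric $g_\infty^+$ with $\int_X|W|^2[g_\infty^+]=0$. An Einstein $4$-manifold with vanishing Weyl tensor has constant sectional curvature, so $g_\infty^+$ is hyperbolic; being a CCE filling of $\mathbb{S}^3$ with trivial topology it is the hyperbolic ball, its FG compactification is $g_{FG}$, and this contradicts the lower bound.

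I expect the decisive obstacle to be the verification of condition (2): passing from the single \emph{global} bound on $\int_X|W|^2$ to the \emph{uniform, pointwise-in-scale} smallness of the non-local boundary tensor $S\sim\partial_{\bf n}\operatorname{Ric}$. The topological reduction is a clean consequence of Gauss--Bonnet, but controlling $g^{(3)}$ is genuinely harder because $g^{(3)}$ is not determined by the boundary metric; it requires an $\varepsilon$-regularity/blow-up analysis ruling out curvature concentration near $M$, and the threshold $\delta_0$ must be calibrated against the least possible Weyl energy of a nontrivial complete Ricci-flat ALE bubble, so that small total energy genuinely forbids bubbling. A secondary delicate point is the $(2''')$ reduction, which relies on the sharp monotonicity tying the conformal infinity's Yamabe constant to the renormalized volume.
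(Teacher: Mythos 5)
Your proposal diverges from the paper at the decisive step, and that step contains a genuine gap. You try to deduce Theorem \ref{maintheorem3} from Theorem \ref{maintheorem1} by \emph{verifying} its condition (2), i.e. the uniform smallness of $\oint_{B(x,r)}|S_i|$, via an $\varepsilon$-regularity argument fed by the smallness of $\int_X|W|^2[g_i^+]$. This cannot work as described: the only boundary $\varepsilon$-regularity available (Lemma \ref{epsilonregularity2}) has $\oint_{B(p,r)\cap\partial X}|S|\,d\text{vol}[\hat g]$ on the \emph{right-hand side} as an input, so it cannot be used to produce smallness of $S$; and since $S=-\tfrac32 g^{(3)}$ is the genuinely non-local piece of the expansion, no local elliptic estimate converts a single global conformally invariant bound into pointwise-in-scale control of it. Smallness of $S$ is only available \emph{a posteriori}, once convergence of $g_i^*$ to $g_{FG}$ (where $S\equiv 0$) is known --- which is what you are trying to prove, so your route is circular exactly at the point you yourself flag as ``the decisive obstacle.'' The paper says this explicitly (``the proof of Theorem \ref{maintheorem3} is different as we have no informations on the $S$-tensor or $T$ curvature'') and takes a different route: it reruns the blow-up analysis of Theorem \ref{maintheorem1} by contradiction and uses the \emph{conformal invariance} of ($2^{\prime\prime}$)/($2^{\prime\prime\prime}$), which therefore passes to the rescaled limits. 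An interior blow-up limit is Ricci-flat, and is flat either because it is also locally conformally flat (Weyl energy zero in the limit) or by Bishop--Gromov volume rigidity via the Li--Qing--Shi estimate (Lemma \ref{Lem:LQS}) under ($2^{\prime\prime\prime}$); a boundary blow-up limit has $g_\infty^+$ hyperbolic; both contradict the curvature normalization $|Rm|(p_\infty)=1$ at the blow-up point. This replaces, rather than verifies, condition (2), and also explains why Theorem \ref{maintheorem3} does not assume the topological condition \eqref{Con:3}: the flatness of blow-up limits is obtained directly, bypassing the ALE classification step (Crisp--Hillman) in which $H_2(X,\mathbb{Z})=0$ was needed. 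Your final quantitative paragraph (limit has zero Weyl energy, hence hyperbolic, hence FG limit is $g_{FG}$, contradiction) does match the paper's endgame.

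Two secondary errors in your reductions. First, the Bochner claim is wrong: $\operatorname{Ric}=-3g^+<0$ does \emph{not} kill harmonic $1$-forms (Bochner vanishing requires positive Ricci; compact hyperbolic manifolds can have large $b_1$); vanishing results in this setting come from positivity of the Yamabe invariant of the conformal infinity (Witten--Yau/Cai--Galloway type results, \cite{CQY04}), not from the sign of the Einstein constant. Second, your reduction of ($2^{\prime\prime\prime}$) to ($2^{\prime\prime}$) is itself circular: Gauss--Bonnet gives $\tfrac14\int_X|W|^2 = 8\pi^2(1+b_2(X)) - 6V$, so $V\to\tfrac{4\pi^2}{3}$ only yields $\int_X|W|^2\to 32\pi^2 b_2(X)$; to conclude smallness of the Weyl energy you need $b_2(X)=0$ first, which in your scheme was to be derived \emph{from} small Weyl energy. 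The paper's blow-up argument sidesteps both issues by treating ($2^{\prime\prime}$) and ($2^{\prime\prime\prime}$) in parallel at the level of rescaled limits, never passing through the topology or through a conversion of one hypothesis into the other.
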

 
We will now relate the condition ($2^{\prime\prime}$)  in Theorem \ref{maintheorem3} to some other natural geometric conformal invariant, namely the "renormalized volume"
in the CCE setting. Although the renormalized volume can be defined on CCE manifolds $(X^{n+1}, \p X, g^{+}) $ for any dimension $n$, we will here mainly recall 
some basic facts on CCE manifolds $(X^4, \p X, g^+)$ when $n=3$. \\ 

The concept of ``renormalized volume" in the CCE setting was introduced by 
Maldacena \cite{Mald-1} (see also the works of Witten \cite{Wi}, Henningson-Skenderis \cite{HeK98} and Graham \cite{G00}).
On CCE manifolds $(X^{n+1}, M^n, g^{+})$ with geodesic defining function $r$, \newline 
For $n$ odd,
$$ \aligned \text{Vol}_{g^{+}}(\{r>
\epsilon\}) & = c_0 \epsilon^{-n} + c_2\epsilon^{-n+2} + \cdots \cdot \cdot  \\
& + c_{n-1} \epsilon^{-1} + V + o(1).\\
\endaligned $$
We call the zero order term $V$ the renormalized volume. It turns out  for $n$ odd, $V$ is independent of $g^{+} \in [g^+]$, and hence are conformal invariants.

We now recall Gauss-Bonnet-Chern formula on compact 4-manifolds $(X^4, \p X, g)$ with totally geodesic boundary.
\begin{equation} 
\label{CGB0}
8 \pi^2 \chi(X)  =  \frac 14\int_X( |W|^2 d\text{vol})[g] + 4\int_X \sigma_2(A_g)dvol[g], 
\end{equation}
where $\sigma_2(A_g) := \frac{1}{4} ( \frac{1}{6} R_g ^2 - \frac{1}{2} |Ric|_g^2) $ is the second elementary symmetric function of the Schouten tensor $A_g$. 
We also recall an earlier result:

\begin{prop}(M. Anderson \cite{An01}, Chang-Qing-Yang \cite{CQY06}, \cite{CQY08})
\label{reno} \\
On conformal compact Einstein manifold $(X^4, M^3, g^{+})$, we have
$$ V = \frac {2}{3} \int_{X^4} \sigma_2 (A_g) dv_g $$
for any compactified metric $g$ with totally geodesic boundary. Thus
\begin{equation}
8 \pi^2 \chi (X^4, M^3) = \frac{1}{4}  \int |W|^2_g dv_g + 6 V.
\end{equation}
\end{prop}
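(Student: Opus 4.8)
The plan is to obtain the identity by comparing two Gauss--Bonnet--Chern formulas, using the conformal invariance of the Weyl term as the bridge. Fix the geodesic compactification $g = r^2 g^{+}$; because the defining function is geodesic the Fefferman--Graham expansion carries no linear term in $r$, so $\partial X$ is totally geodesic in $(\bar X, g)$ and the compact formula \eqref{CGB0} applies,
\begin{equation*}
8\pi^2\chi(X) = \frac14\int_X |W|^2\, dv_g + 4\int_X \sigma_2(A_g)\, dv_g .
\end{equation*}
In dimension four the density $|W|^2\,dv$ is a pointwise conformal invariant, hence $\int_X|W|^2\,dv_g = \int_X|W|^2\,dv_{g^{+}}$. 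Consequently the whole statement reduces to the renormalized Gauss--Bonnet formula for the complete Einstein metric,
\begin{equation*}
8\pi^2\chi(X) = \frac14\int_X |W|^2\, dv_{g^{+}} + 6V. \tag{$\ast$}
\end{equation*}
Indeed, subtracting $(\ast)$ from the compact formula gives $4\int_X\sigma_2(A_g)\,dv_g = 6V$, i.e. $V = \frac23\int_X\sigma_2(A_g)\,dv_g$; substituting $4\int_X\sigma_2(A_g)\,dv_g = 6V$ back into the compact formula produces the asserted consequence. Since this argument applies to every totally geodesic compactification $g$ and yields the same value of $\int_X\sigma_2(A_g)\,dv_g$, it justifies the phrase ``for any compactified metric $g$ with totally geodesic boundary.''

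To prove $(\ast)$ I would apply the Chern--Gauss--Bonnet theorem with boundary to the exhaustion $X_\epsilon := \{r > \epsilon\}$, whose interior integrand contributing to $8\pi^2\chi$ is $\frac14|W|^2 + 4\sigma_2(A_{g^{+}})$ and which carries a boundary transgression integral $B(\epsilon)$ over the level set $\{r=\epsilon\}$. The Einstein condition $\operatorname{Ric}[g^{+}] = -3g^{+}$ forces $A_{g^{+}} = -\frac12 g^{+}$, so $\sigma_2(A_{g^{+}}) \equiv \frac32$ is constant and the $\sigma_2$ part of the interior integral equals $6\,\text{Vol}_{g^{+}}(X_\epsilon)$ exactly. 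Since $\chi(X_\epsilon)=\chi(X)$ for $\epsilon$ small, the theorem reads
\begin{equation*}
8\pi^2\chi(X) = \frac14\int_{X_\epsilon}|W|^2\, dv_{g^{+}} + 6\,\text{Vol}_{g^{+}}(X_\epsilon) + B(\epsilon).
\end{equation*}
As $\epsilon\to0$ the Weyl integral converges to the finite quantity $\frac14\int_X|W|^2\,dv_{g^{+}}$ (it agrees with $\frac14\int_X|W|^2\,dv_g$, which is finite because $g$ is smooth on $\bar X$), while by definition of the renormalized volume $6\,\text{Vol}_{g^{+}}(X_\epsilon) = 6c_0\epsilon^{-3} + 6c_2\epsilon^{-1} + 6V + o(1)$.

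The main obstacle is the asymptotic analysis of the boundary transgression $B(\epsilon)$. The consistency of the displayed identity (its left side is independent of $\epsilon$) already forces the divergent terms of $B(\epsilon)$ to cancel $6c_0\epsilon^{-3}+6c_2\epsilon^{-1}$, so what remains---and what is genuinely the heart of the matter---is to compute the finite part of $B(\epsilon)$ and show that it vanishes; $(\ast)$ is in fact \emph{equivalent} to this vanishing. This is where the full Fefferman--Graham expansion $g = h + g^{(2)}r^2 + g^{(3)}r^3 + g^{(4)}r^4 + \cdots$ enters: one expresses the induced metric and the second fundamental form of $\{r=\epsilon\}$ with respect to $g^{+}$ through the expansion coefficients, substitutes them into the Chern boundary transgression form, and expands in powers of $\epsilon$. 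The vanishing of the finite part then follows from the Einstein equation together with the trace condition $\operatorname{Tr}_{\hat g}g^{(3)}=0$ recorded above. Letting $\epsilon\to0$ establishes $(\ast)$, and the reduction in the first paragraph completes the proof.
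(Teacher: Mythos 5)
Your proposal is correct in substance, but it takes a genuinely different route from the paper's: you renormalize the Chern--Gauss--Bonnet formula directly on the exhaustion $\{r>\epsilon\}$ of the Einstein metric $g^+$, which is Anderson's original argument in \cite{An01} (one of the two sources the proposition cites), whereas the paper follows the Chang--Qing--Yang route. The paper introduces the Fefferman--Graham compactification $g^*=e^{2w}g^+$ with $-\Delta_{g^+}w=3$, observes $Q_{g^*}\equiv 0$ (Lemma \ref{fgm2}), uses Fefferman--Graham's identification $V=\oint_{\p X}B$ together with $B|_{\p X}=\frac13 T_{g^*}$ (Lemmas \ref{fgmetric} and \ref{fgm2}), and then the elementary identity $\int_X Q\,dv + 2\oint_{\p X}T\,d\sigma = 4\int_X\sigma_2(A)\,dv$ (integrate $Q=-\frac16\Delta R+4\sigma_2(A)$ and apply the divergence theorem) to conclude $6V=2\oint_{\p X} T_{g^*}=4\int_X\sigma_2(A_{g^*})\,dv_{g^*}$; conformal invariance of $\int_X\sigma_2$ among totally geodesic compactifications then yields the statement for every such $g$, and \eqref{CGB0} gives the Euler-characteristic formula. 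What the paper's route buys is that it bypasses entirely the asymptotic analysis of the boundary transgression $B(\epsilon)$, which you correctly identify as the heart of your approach but assert rather than carry out: as you note, the consistency of your displayed identity only forces cancellation of the divergent terms, so the vanishing of the \emph{finite} part of $B(\epsilon)$ is a genuine computation (Anderson's), requiring the FG expansion, the Einstein equation and $\tr_{\hat g}g^{(3)}=0$ exactly as you indicate; in the paper all of this renormalization is packaged into the expansion $w=\log r+A+Br^3$ of a single scalar function and the $(Q,T)$ pair. What your route buys is that it proves, as a byproduct, that $\int_X\sigma_2(A_g)\,dv_g$ takes the same value for every totally geodesic compactification (you derive $4\int_X\sigma_2(A_g)\,dv_g=6V$ for each such $g$ directly), whereas the paper invokes this conformal invariance as a known fact; the bridge both proofs share is the pointwise conformal invariance of the density $|W|^2\,dv$ in dimension four, which you use correctly.
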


We briefly recall the proof of above Proposition in Chang-Qing-Yang \cite{CQY06}, as this is the crucial point that leads us to
adopt the Fefferman-Graham's compactification to study the compactness problem of CCE manifolds.

\begin{proof}[Sketch proof of Proposition \ref{reno}] 
\begin{lemm} (Fefferman-Graham \cite{FG})\label{fgmetric}
Suppose $(X^{4}, {\p X}, g^{+})$ is conformally compact Einstein with conformal infinity $({\p X}, [\hat g])$,  fix $\hat g \in [\hat g ]$ and $r$ its
corresponding geodesic defining function.  Consider the solution  $w$ to (\ref{fg}),
then $ w $ has the asymptotic behavior 
$$ w= log \, r+ A + B r ^3 $$ near ${\p X}$, 
where $A, B$ are functions even in $r$, $A|_{\p X}=0$, then
$$ V = \int_{\p X} B|_{\p X}. $$
\end{lemm}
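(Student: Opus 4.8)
The plan is to put $g^+$ into geodesic normal form near $\partial X$, treat (\ref{fg}) as a linear equation whose indicial behaviour at the boundary forces the stated expansion, and then read off $V$ directly from the equation by the divergence theorem. First I would write $g^+ = r^{-2}(dr^2 + g_r)$ with $g_r = \hat g + g^{(2)}r^2 + g^{(3)}r^3 + \cdots$ the Fefferman--Graham expansion. The conformal change formula for the Laplacian gives, for a function $f$,
\[
\Delta_{g^+} f = r^2\big(\partial_r^2 f + H(r)\,\partial_r f + \Delta_{g_r} f\big) - 2r\,\partial_r f, \qquad H(r) := \tfrac12\,\partial_r\log\det g_r .
\]
Applying this to $\log r$ yields $-\Delta_{g^+}\log r = 3 - rH(r)$. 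Since $g^{(2)} = -\tfrac12 A_{\hat g}$ is even in $r$ and $\operatorname{tr}_{\hat g} g^{(3)} = 0$, the quantity $rH(r) = \operatorname{tr}_{\hat g} g^{(2)}\,r^2 + O(r^4)$ is even and of order $r^2$. Writing $w = \log r + v$, equation (\ref{fg}) becomes $-\Delta_{g^+} v = rH(r)$.

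\emph{Asymptotic solution.} Next I would solve this for $v$ as a formal series. Since $-\Delta_{g^+}(r^k) = -k(k-3)\,r^k + O(r^{k+2})$, the indicial roots are $0$ and $3$. The even, $O(r^2)$ source determines the coefficients of $r^2, r^4, \dots$ recursively, producing an even part $A$; discarding the $r^0$ homogeneous solution enforces $A|_{\partial X} = 0$. At order $r^3$ the root $k=3$ is resonant, so a term $r^3\log r$ could in principle be forced. The main obstacle is precisely to show that this obstruction vanishes, so that no logarithmic term appears and the coefficient of $r^3$ is a free parameter, which I denote $B|_{\partial X}$. I expect this to follow from the evenness of the source through order $r^3$ --- equivalently from $\operatorname{tr}_{\hat g} g^{(3)} = 0$ --- exactly as in the Fefferman--Graham analysis of the Poincar\'e metric; the remaining coefficients then organize into even functions $A$ and $B$. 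The free coefficient $B|_{\partial X}$ is non-local: it is fixed by matching the formal solution to the genuine global solution of (\ref{fg}), which is unique by the maximum principle once the boundary data is prescribed. This gives $w = \log r + A + Br^3$ with $A, B$ even and $A|_{\partial X}=0$.

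\emph{Extracting $V$.} Finally I would integrate $-\Delta_{g^+} w = 3$ over $\{r>\epsilon\}$. With outward unit normal $-r\,\partial_r$ and area form $dA_{g^+} = \epsilon^{-3}\sqrt{\det g_\epsilon}\,dx$ on $\{r=\epsilon\}$, the divergence theorem gives
\[
-3\,\mathrm{Vol}_{g^+}(\{r>\epsilon\}) = \int_{\{r=\epsilon\}} \big(-r\,\partial_r w\big)\,dA_{g^+}.
\]
Using $w = \log r + A + Br^3$ with $A = A_2 r^2 + O(r^4)$ and $B = B|_{\partial X} + O(r^2)$ one finds $-r\,\partial_r w = -1 - 2A_2\epsilon^2 - 3\,(B|_{\partial X})\,\epsilon^3 + O(\epsilon^4)$, while $\sqrt{\det g_\epsilon} = \sqrt{\det\hat g}\,\big(1 + \tfrac12\operatorname{tr}_{\hat g} g^{(2)}\,\epsilon^2 + O(\epsilon^4)\big)$ because $\operatorname{tr}_{\hat g} g^{(3)} = 0$. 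Multiplying by $\epsilon^{-3}\sqrt{\det g_\epsilon}$ and integrating, the right-hand side has an $\epsilon$-expansion with no $\epsilon^{-2}$ term and whose only $\epsilon^0$ contribution is $-3\int_{\partial X} B|_{\partial X}\,dv_{\hat g}$. Matching this with the $\epsilon^0$ term of $-3\,\mathrm{Vol}_{g^+}(\{r>\epsilon\}) = -3\big(c_0\epsilon^{-3}+c_2\epsilon^{-1}+V+o(1)\big)$ yields $-3V = -3\int_{\partial X} B|_{\partial X}\,dv_{\hat g}$, i.e. $V = \int_{\partial X} B|_{\partial X}$, as claimed. Note that the absence of a $\log\epsilon$ term, which is what makes the expansion and hence $V$ well defined, is exactly the no-logarithm statement established in the asymptotic analysis.
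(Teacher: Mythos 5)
The paper gives no proof of this lemma at all --- it is quoted directly from Fefferman--Graham \cite{FG} --- and your argument is a correct reconstruction of that cited source's proof: the formula $-\Delta_{g^+}\log r = 3 - rH(r)$, the indicial roots $0$ and $3$ with the resonance at $3$ killed by evenness of the source through order $r^3$ (i.e. by $\operatorname{tr}_{\hat g}g^{(3)}=0$), the normalization $A|_{\partial X}=0$ fixing the root-$0$ freedom and the non-local $B|_{\partial X}$ fixed by the global solution, and then the divergence theorem over $\{r>\epsilon\}$ with outward normal $-r\partial_r$ and area form $\epsilon^{-3}\sqrt{\det g_\epsilon}\,dx$, whose $\epsilon^0$ coefficient correctly yields $V=\int_{\partial X}B|_{\partial X}$. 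The one step you state by analogy rather than prove --- the evenness of $A$ and $B$ to all orders, which rests on the parity structure of the Fefferman--Graham expansion in odd boundary dimension --- is precisely the part of the cited result one would in any case take from \cite{FG}, so your proposal matches the intended argument.
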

 
\begin{lemm} (Chang-Qing-Yang \cite{CQY06})
\label{fgm2} 
With the same notation as in Lemma \ref{fgmetric}, 
Consider the metric $g^{*} = g_w = e^{2w} g^+ $, then $g^*$ is totally geodesic on boundary with 
(1)  $Q_{g^{*}} \equiv 0 ,$  
(2)  $ B|_{\p X} = \frac{1}{36} \frac{\partial}{\partial n} R_{g^*} = \frac {1}{3} T_{g^{*}} .$
\end{lemm}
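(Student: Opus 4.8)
The plan is to verify the three assertions separately, using $g^{*}=e^{2w}g^{+}$ with the expansion $w=\log r+A+Br^{3}$ supplied by Lemma \ref{fgmetric} and the Einstein condition $\operatorname{Ric}[g^{+}]=-3g^{+}$.

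\emph{Vanishing of the $Q$-curvature.} I would obtain (1) algebraically from the conformal law $P[g^{+}]w+Q[g^{+}]=Q[g^{*}]e^{4w}$. On an Einstein $4$-manifold with $\operatorname{Ric}=\lambda g$ one has $\frac23 Rg-2\operatorname{Ric}=\frac23\lambda g$, so the first-order part of the Paneitz operator collapses to a multiple of the Laplacian and $P[g^{+}]=\Delta^{2}-\frac23\lambda\Delta$; for $\lambda=-3$ this is $\Delta^{2}+2\Delta$, while $Q[g^{+}]=\frac23\lambda^{2}=6$. The defining equation $-\Delta_{g^{+}}w=3$ gives $\Delta_{g^{+}}w=-3$ and $\Delta_{g^{+}}^{2}w=0$, hence $P[g^{+}]w=-6$ and $Q[g^{*}]e^{4w}=-6+6=0$. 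This step uses none of the boundary data.

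\emph{Totally geodesic boundary.} Here I would first record that the geodesic compactification $g=r^{2}g^{+}=dr^{2}+h_{r}$ has the even normal form $h_{r}=\hat g+g^{(2)}r^{2}+g^{(3)}r^{3}+\cdots$ with no term linear in $r$, so its boundary second fundamental form $-\frac12\partial_{r}h_{r}|_{r=0}$ vanishes; thus $g$ is already totally geodesic. Writing $g^{*}=e^{2\phi}g$ with $\phi=w-\log r=A+Br^{3}$, the conformal transformation law for the second fundamental form reads $II[g^{*}]=e^{\phi}\big(II[g]+(\partial_{\bf n}\phi)\,g|_{T\partial X}\big)$. Since $A$ is even with $A|_{\partial X}=0$ and $Br^{3}=O(r^{3})$, one has $\phi=O(r^{2})$ and therefore $\partial_{\bf n}\phi|_{\partial X}=0$; combined with $II[g]=0$ this forces $II[g^{*}]=0$.

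\emph{The scalar-curvature identity.} The second equality in (2) is immediate from the definition $T[g^{*}]=\frac{1}{12}\partial_{\bf n}R_{g^{*}}$ valid on a totally geodesic boundary, so the whole of (2) reduces to the claim $\partial_{\bf n}R_{g^{*}}|_{\partial X}=36\,B|_{\partial X}$. To get this I would apply the four-dimensional conformal change of scalar curvature, $R_{g^{*}}=e^{-2w}\big(R_{g^{+}}-6\Delta_{g^{+}}w-6|\nabla w|^{2}_{g^{+}}\big)$, and substitute $R_{g^{+}}=-12$ and $\Delta_{g^{+}}w=-3$ to obtain $R_{g^{*}}=e^{-2w}\big(6-6|\nabla w|^{2}_{g^{+}}\big)$. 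Using $g^{+}=r^{-2}(dr^{2}+h_{r})$, the tangential part of $|\nabla w|^{2}_{g^{+}}$ is $O(r^{6})$, while from $\partial_{r}w=\frac1r+\partial_{r}A+3Br^{2}+\cdots$ with $A=a_{2}r^{2}+\cdots$ one finds $|\nabla w|^{2}_{g^{+}}=1+4a_{2}r^{2}+6\,(B|_{\partial X})\,r^{3}+O(r^{4})$. Multiplying by $e^{-2w}=r^{-2}(1-2A-\cdots)$ yields $R_{g^{*}}=-24a_{2}-36\,(B|_{\partial X})\,r+O(r^{2})$; since $\phi|_{\partial X}=0$ makes $\partial_{r}$ a unit vector for $g^{*}$ at $r=0$ and the outward normal is $-\partial_{r}$, one gets $\partial_{\bf n}R_{g^{*}}|_{\partial X}=36\,B|_{\partial X}$, which is the assertion.

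The main obstacle is the last computation: one has to expand $w$, $|\nabla w|^{2}_{g^{+}}$ and $e^{-2w}$ consistently through order $r^{3}$, confirm that the tangential and cross terms really are of higher order, and fix the sign and normalization of ${\bf n}$ so that the numerical factor $36$ — equivalently the equality with $\frac13 T_{g^{*}}$ — emerges correctly. The parity of $A$ and $B$ from Lemma \ref{fgmetric} is exactly what ensures that the coefficient of $r$ in $R_{g^{*}}$ is $-36\,B|_{\partial X}$ and is not polluted by the local quantity $a_{2}$.
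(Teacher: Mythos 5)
Your proposal is correct and follows essentially the same route as the paper: assertion (1) via the conformal transformation law $P_{g^+}w+Q_{g^+}=Q_{g^*}e^{4w}$ together with the collapse of the Paneitz operator on the Einstein metric (your $\Delta^2+2\Delta$ is exactly the paper's factored form $(-\Delta_{g^+})\circ(-\Delta_{g^+}-2)$) and $Q_{g^+}=6$, $-\Delta_{g^+}w=3$; and assertion (2) via the four-dimensional conformal scalar curvature formula combined with the expansion $w=\log r+A+Br^3$, which is precisely the ``straightforward computation'' the paper invokes without writing out. Your execution merely supplies details the paper omits — the parity argument isolating the coefficient $-36\,B|_{\partial X}$ of $r$ in $R_{g^*}$, the sign/normalization of the outward normal, and the verification that the boundary is totally geodesic via $II[g^*]=e^{\phi}\bigl(II[g]+(\partial_{\bf n}\phi)\,g|_{T\partial X}\bigr)$ with $\phi=O(r^2)$ — and all of these check out.
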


\begin{proof}
Recall we have $g^+$ is Einstein with $Ric_{g^+}=-3g^+$, thus $$P_{g^+} = (- \Delta_{g^+}) \circ (- \Delta_{g^+} -2) $$
and  $ Q_{g^+} = 6.$
Therefore
$$ P_{g^+} w + Q_{g^+} = 0 =  e^{2w} Q_{g^{*}}. $$
Assertion (2) follows from a straight forward computation using the scalar curvature equation and the asymptotic behavior of $w$.
\end{proof} 

Applying Lemmas \ref{fgmetric} and \ref{fgm2}, we get 
$$
\aligned 6 V  & = 6 \oint_{\p X} B|_{\p X} d\sigma_{h} 
= \frac {1}{6} \oint_{\p X} \frac{\partial}{\partial n} R_{g^*} d\sigma_h \\
 & = \int_X Q_{g^{*}} + 2 \oint_{\p X}T_{g^{*}}   = 4 \int_{X} \sigma_2 (A_{g^{*}}) dv_{g^{*}}. \\
\endaligned
$$

For any other compactified metric $g$ with totally geodesic boundary, $ \int_{X^4} \sigma_2 (g) dv_g $
is a conformal invariant, and $V$ is a conformal invariant, thus
the result holds once for $g^*$, holds for any such $g$ in the same conformal class, which establishes Proposition \ref{reno}.
\end{proof}

We also recall some well known fact that (cf. \cite{CQY04, CQY06}).
for a conformally compact Einstein 4-manifold with the conformal infinity of positive Yamabe type, 
\begin{equation}
V(X^4, g^+) \leq V(\mathbb{H}^4, g_{\mathbb{H}}) = \frac {4\pi^2}3
\end{equation}
where the equality holds if and only if $(X^4, g^+)$ is isometric to $(\mathbb{H}^4, g_{\mathbb{H}})$.

We now restrict our attention to class of CCE manifolds $(B^4, S^3, g^+)$, in this class, for the model case when $g^+ = g_{\mathbb{H}}$,  formulas for the specific FG $g^{*}$ metric 
can be computed straight forwardly.

\begin{lemm}
On $(B^4, S^3, g_{\mathbb H})$, 
$$ g^{*} = e^{ (1- |x|^2)}  |dx|^2 \,\,\, on \,\,\, B^4$$  
$$ Q_{g^{*}} \equiv 0, \,\,\,\,\,  T_{g^{*}}  \equiv 2    \,\,\, on \,\,\, S^3 $$
$$ (g^{*})^{(3)} \equiv 0 $$
and
$$ \int_{B^4} \sigma_2 (A_{g*}) dv_{g^{*}}  =  2\, \pi^2. $$
\end{lemm}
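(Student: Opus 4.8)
The plan is to carry out the computation directly, since the hyperbolic filling is $O(4)$-symmetric and the whole problem reduces to one radial variable. First I would fix the Poincaré ball model $g_{\mathbb{H}} = \frac{4}{(1-|x|^2)^2}|dx|^2$ on $B^4$, for which $Ric[g_{\mathbb{H}}] = -3 g_{\mathbb{H}}$. Writing the candidate compactification as $g^* = e^{2w}g_{\mathbb{H}} = e^{(1-|x|^2)}|dx|^2$, I would verify that its conformal factor $w$ solves the defining equation \eqref{fg}, i.e. $-\Delta_{g_{\mathbb{H}}} w = 3$: since both $g_{\mathbb{H}}$ and $g^*$ are conformal to the flat metric, $w$ depends only on $s = |x|$, and the conformal transformation rule for the Laplacian in dimension $4$ turns $-\Delta_{g_{\mathbb{H}}}w = 3$ into an elementary identity in $s$. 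One also checks that $g^*|_{S^3} = g_{S^3}$, the round (hence Yamabe) metric, so $g^*$ is indeed the FG compactification associated with the round conformal infinity.

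Granting this, the assertion $Q_{g^*} \equiv 0$ is immediate from Lemma \ref{fgm2}(1). For the $T$-curvature I would use the totally geodesic form \eqref{T-curvature}, $T[g^*] = \frac{1}{12}\partial_{\bf n} R_{g^*}$. I would first compute $R_{g^*}$ from the conformal change formula applied to $g^* = e^{2u}|dx|^2$ with $u = \frac12(1-|x|^2)$; using $\nabla_0 u = -x$ and $\Delta_0 u = -4$ this gives $R_{g^*} = e^{-(1-|x|^2)}(24 - 6|x|^2)$. Passing to the warped-product form $g^* = dt^2 + \phi(t)^2 g_{S^3}$ with $\phi = e^u s$, the computation $d\phi/ds = e^u(1-s^2)$ vanishes at $s=1$, confirming that $\{|x|=1\}$ is totally geodesic so that \eqref{T-curvature} applies; differentiating $R_{g^*}$ along the unit outward normal ${\bf n} = e^{-u}\partial_s$ yields $\partial_{\bf n} R_{g^*} = 24$ at $s=1$, hence $T_{g^*} \equiv 2$.

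For $(g^*)^{(3)} \equiv 0$ I would invoke symmetry rather than grind the asymptotic expansion: the filling $(\mathbb{H}^4, g_{\mathbb{H}})$ with round conformal infinity is invariant under the $O(4)$ action that restricts to the isometry group of $(S^3, g_{S^3})$, so $g^{(3)}$ must be an $O(4)$-invariant symmetric $2$-tensor on $S^3$, i.e. a constant multiple of $g_{S^3}$; combined with the trace-free property $\mathrm{Tr}_{\hat g}\,g^{(3)} = 0$ recorded above, this forces $g^{(3)} \equiv 0$ (equivalently $S[g^*] \equiv 0$ via \eqref{g3}). Finally, for $\int_{B^4}\sigma_2(A_{g^*})\,dv_{g^*}$ I would appeal to the Gauss-Bonnet-Chern formula \eqref{CGB0}: since $g^*$ is conformal to the flat metric it is conformally flat, so $|W|^2[g^*] \equiv 0$, and with $\chi(B^4) = 1$ formula \eqref{CGB0} collapses to $8\pi^2 = 4\int_{B^4}\sigma_2(A_{g^*})\,dv_{g^*}$, giving the value $2\pi^2$ (this also matches $\int\sigma_2 = \frac32 V(\mathbb{H}^4) = \frac32\cdot\frac{4\pi^2}{3}$ via Proposition \ref{reno}).

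The only genuinely delicate points are bookkeeping ones: getting the conformal-change constants in dimension $4$ right when verifying \eqref{fg} and computing $R_{g^*}$, and correctly normalizing the unit normal (and confirming the totally geodesic condition) so that the factor in \eqref{T-curvature} produces exactly $T_{g^*} = 2$ rather than a spurious multiple. I expect these to be the main, though routine, obstacles; the conceptual content is light, because symmetry and the conformal flatness of $g^*$ dispose of $(g^*)^{(3)}$ and the $\sigma_2$ integral with essentially no computation.
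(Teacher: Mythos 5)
Your proposal is correct and follows exactly the route the paper intends: the lemma is stated there without proof as a straightforward model computation, and your direct verification (checking that $w=\log\frac{1-|x|^2}{2}+\frac{1-|x|^2}{2}$ solves \eqref{fg}, computing $R_{g^*}=e^{-(1-|x|^2)}(24-6|x|^2)$ so that $\partial_{\bf n}R_{g^*}=24$ and $T_{g^*}=2$, killing $(g^*)^{(3)}$ by $O(4)$-invariance plus trace-freeness, and evaluating the $\sigma_2$ integral from \eqref{CGB0} with $W\equiv 0$ and $\chi(B^4)=1$) is precisely that computation. Your numbers are also consistent with the paper's surrounding facts, e.g. $B|_{\p X}=\frac{1}{36}\partial_{\bf n}R_{g^*}=\frac23$ gives $V=\oint_{S^3}B=\frac{4\pi^2}{3}$ and $\int_{B^4}\sigma_2(A_{g^*})\,dv_{g^*}=\frac32 V=2\pi^2$, matching Proposition \ref{reno}.
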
 
 
On $(B^4, S^3, g )$, 
for a compact metric $g$  with totally geodesic boundary, Gauss-Bonnet-Chern
formula takes the form:
$$
8 \pi^2 \chi (B^4, S^3) = 8 \pi^2 = \int_{B^4} ( \frac {1}{4} |W|_g^2 +  4 \sigma_2 (A_g)) dv_g,  
$$
 
Thus we reached the following corollary of Theorem \ref{maintheorem3}:
 
\begin{coro} 
\label{application1}
Let  $\{X=B^4,M=\partial X=S^3, g^+\}$ be a  $4$-dimensional oriented CCE on $X$ with boundary $\partial X$.  
Assume the boundary Yamabe metric $h= h^Y$ in the conformal infinity of positive type and $Y(S^3, [h])> c_1$ for some fixed $c_1>0$ and  $h$ is compact 
in $C^{k,\alpha}$ norm with $k\geq 3$. Let $g^*$ be the corresponding FG compactification.
Then the following properties are equivalent: 
\begin{enumerate}
\item There exists  some small positive number $\varepsilon>0$ such that
$$
\int_X \sigma(A_{g*}) dv_{g^{*}}  \geq 2\pi^2- \varepsilon.
$$ 
\item There exists some small positive  number $\varepsilon>0$ such that
$$
\int_X |W|_{g^+} ^2 dv_{g^+}  \le  4 \varepsilon .
$$
\item There exists  some small positive  number $\varepsilon_1>0$ such that
$$
Y(S^3,[g_{c}])\, \geq \,  Y(S^3,[h])> Y(S^3,[g_{c}])- \varepsilon_1
$$
where $g_{c}$ is the standard metric on $S^3$.
\item There exists some small positive number $\varepsilon_2>0$ such that for all metrics $g^{*}$ with boundary metric $h$ same
volume as the standard metric $g_c$ on $S^3$, we have
$$ 
T( g^{*}) \geq 2 - \varepsilon_2.
$$
\item There exists  some small positive  number $\varepsilon_3>0$ such that
$$
|(g^{*})^{(3)}| \leq \varepsilon_3.
$$
\end{enumerate}
Where all the $\varepsilon_{i} $ (i = 1,2,3) tends to zero when $\varepsilon$ tends to zero and vice versa for each $i$. 
\end{coro}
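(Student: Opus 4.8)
The plan is to prove the equivalence of the five statements by establishing a cycle of implications, exploiting the Gauss-Bonnet-Chern identity as the backbone that links the curvature integrals. First I would set up the central accounting identity. On $(B^4, S^3, g^*)$ with totally geodesic boundary, the Gauss-Bonnet-Chern formula reads
\begin{equation*}
8\pi^2 = \int_{B^4}\lrp{\tfrac14 |W|_{g^*}^2 + 4\sigma_2(A_{g^*})} dv_{g^*}.
\end{equation*}
Since the Weyl tensor is conformally invariant (so $\int |W|^2_{g^*}dv_{g^*}=\int|W|^2_{g^+}dv_{g^+}$), this immediately ties together statements (1) and (2): the identity shows that $\int_{B^4}\sigma_2(A_{g^*})\,dv_{g^*}$ is large exactly when $\int|W|^2_{g^+}dv_{g^+}$ is small, with $\int\sigma_2 = 2\pi^2 - \tfrac14\int|W|^2_{g^+}$. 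Hence $(1)\Leftrightarrow(2)$ is essentially algebraic, with $\varepsilon = \tfrac14(4\varepsilon)=\varepsilon$ matching the stated normalizations.

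Next I would connect these to the renormalized volume. By Proposition \ref{reno} we have $V = \tfrac23\int_{B^4}\sigma_2(A_{g^*})\,dv_{g^*}$, so statement (1) is equivalent to $V \geq \tfrac43\pi^2 - \tfrac23\varepsilon$, i.e. $V$ is close to its maximal value $V(\H^4,g_\H) = \tfrac43\pi^2$ from the rigidity inequality recalled in the excerpt. The sharp inequality $V(X^4,g^+)\leq V(\H^4)$ for positive Yamabe type, together with its equality case, is what forces proximity to the hyperbolic model; this is how I would route toward statements (3), (4), (5). For $(3)$, the Yamabe constant of the conformal infinity controls $V$: being close to $Y(\S^3,[g_c])$ forces $(\partial X,[\hat g])$ to be near the round sphere, which via the filling-in rigidity pushes $V$ toward maximality, and conversely. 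For the equivalences $(1)\Leftrightarrow(4)$ and $(1)\Leftrightarrow(5)$ I would use Lemmas \ref{fgmetric} and \ref{fgm2}: the chain $6V = \tfrac16\oint_{S^3}\partial_n R_{g^*}\,d\sigma = 2\oint_{S^3}T_{g^*}\,d\sigma$ (using $Q_{g^*}\equiv 0$) shows $V$ close to maximal is equivalent to the boundary average of $T_{g^*}$ being close to the model value $2$, which is statement (4); and since $S_{\alpha\beta} = -\tfrac32 g^{(3)}_{\alpha\beta}$ with $T$ governing the normal derivative of scalar curvature, smallness of $(g^*)^{(3)}$ ties in through the model computation $(g^*)^{(3)}\equiv 0$ of the last Lemma, giving statement (5).

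The heart of the matter, and the main obstacle, is turning these \emph{integral} or \emph{averaged} near-equalities into the \emph{pointwise} conclusions (4) and (5) while tracking that each $\varepsilon_i\to 0$ as $\varepsilon\to 0$ and vice versa. The quantitative two-sided control of $\varepsilon_i$ against $\varepsilon$ cannot follow from the sharp inequalities alone; it requires a compactness-and-rigidity argument. Concretely, I would argue by contradiction: if some $\varepsilon_i$ failed to go to zero along a sequence with $\varepsilon\to 0$, then Theorem \ref{maintheorem3} (applicable since condition (1) holds via $Y(S^3,[h])>c_1$ and the $C^{k,\alpha}$ compactness of $\mathcal C$) provides a convergent subsequence of FG compactifications in $C^{k,\alpha}$ Cheeger-Gromov topology, whose limit must, by the rigidity case of $V\leq V(\H^4)$, be the hyperbolic FG model on $(B^4,S^3)$. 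By the last Lemma the limit satisfies $T\equiv 2$ and $(g^*)^{(3)}\equiv 0$ exactly, and $C^{k,\alpha}$ convergence with $k\geq 3$ transfers these to the tensor $S$ (hence $g^{(3)}$) and the boundary curvature $T$, contradicting the assumed failure. This contradiction argument is where Theorem \ref{maintheorem3} is indispensable, and its bookkeeping — ensuring the diffeomorphisms fixing the boundary do not disturb the boundary-defined quantities $T$ and $g^{(3)}$ — is the delicate point I expect to occupy most of the work.
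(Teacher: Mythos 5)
Your overall scaffolding is the same as the paper's: the paper offers no separate proof of Corollary \ref{application1} but presents it as an immediate consequence of the Gauss--Bonnet--Chern formula with $\chi(B^4)=1$, Proposition \ref{reno} ($V=\tfrac23\int\sigma_2$), the identity $6V=2\oint T_{g^*}$ from Lemmas \ref{fgmetric}--\ref{fgm2}, the model computation ($T_{g_{FG}}\equiv 2$, $(g_{FG})^{(3)}\equiv 0$), and the compactness/closeness statement of Theorem \ref{maintheorem3} -- exactly the ingredients you assemble, including the contradiction argument to convert integral near-equalities into the pointwise statements (4) and (5). Two small points: the algebra in $(1)\Leftrightarrow(2)$ should read $\int\sigma_2(A_{g^*})\,dv_{g^*}=2\pi^2-\tfrac1{16}\int|W|^2$ (you wrote $-\tfrac14$), immaterial for the qualitative equivalence; and note that $(4)\Rightarrow(1)$ needs no compactness at all, since $6V=2\oint T\,d\sigma_h$ with the volume normalization $\mathrm{vol}(S^3,h)=2\pi^2$ gives $\int\sigma_2=\tfrac32 V\ge 2\pi^2-\pi^2\varepsilon_2$ directly, and this is also what legitimizes your appeal to the rigidity case of $V\le V(\mathbb{H}^4,g_{\mathbb{H}})$ in that direction.

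There is, however, a genuine gap in your uniform scheme at the direction starting from (5). You identify every blow-down limit as the hyperbolic model ``by the rigidity case of $V\le V(\mathbb{H}^4)$,'' but smallness of $(g^*)^{(3)}$ controls neither $V$ nor $\int|W|^2$ a priori. What $(g^*)^{(3)}\to 0$ does give, via \eqref{g3}, is uniform smallness of the tangential $S$-tensor, hence condition (2) of Theorem \ref{maintheorem1} and compactness of the $g_i^*$; but the limit then only satisfies $(g_\infty^*)^{(3)}\equiv 0$ with a boundary Yamabe metric $h_\infty\in\mathcal{C}$ that is not known to be round. Neither $V$-rigidity applies (no volume information), nor does Biquard's unique continuation as used in Theorem \ref{maintheorem1}: that argument matches the Cauchy data $(\hat g, g^{(3)})$ against the hyperbolic data, which requires the boundary metric itself to be the flat/round model, not merely $g^{(3)}=0$. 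Note also that $T$ and $g^{(3)}$ are genuinely independent boundary data (for $g_{FG}$ one has $(g^*)^{(3)}\equiv 0$ yet $T\equiv 2$; the tangential part of $S$ is trace-free while $2T$ is the full trace of $S$), so (5) cannot be converted into (4) algebraically either. To close this direction you need an additional rigidity input -- that a CCE metric on $B^4$ with conformal infinity in $\mathcal{C}$ and vanishing $g^{(3)}$ must be hyperbolic (forcing $h_\infty$ round) -- which your proposal neither states nor proves; everything else in your write-up is sound.
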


As an application of Theorem \ref{maintheorem3}, we are able to establish the global uniqueness for the conformally compact Einstein 
metrics on $\mathbb{B}^4$ with prescribed conformal infinities that very close to the conformal round 3-sphere (cf. \cite{GL, Lee1, LQS}). 
Namely,

\begin{theo} \label{uniqueness} For a given conformal 3-sphere $(\mathbb{S}^3, [\hat g])$  that is sufficiently close to the round one in $C^{3,\alpha}$ topology with $\alpha\in (0,1)$, 
there is exactly one conformally compact Einstein metric $g^+$ on $\mathbb{B}^4$ whose conformal infinity is the prescribed 
conformal 3-sphere $(\mathbb{S}^3, [\hat g])$.
\end{theo}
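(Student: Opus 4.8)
The plan is to combine the quantitative compactness in Theorem \ref{maintheorem3} with the local existence-and-uniqueness theorem of Graham-Lee \cite{GL}, through a contradiction argument. Existence of a conformally compact Einstein filling for every conformal infinity $(\mathbb{S}^3, [\hat g])$ sufficiently near the round one is exactly the content of \cite{GL}, so I would treat only global uniqueness. I would first recall that \cite{GL} in fact supplies \emph{local} uniqueness: in a suitable gauge (geodesic normal form relative to a boundary defining function) the Einstein operator linearized at the hyperbolic metric $g_{\mathbb{H}}$ is an isomorphism on the relevant weighted H\"older spaces, so the implicit function theorem produces a neighborhood $\mathcal{V}$ of $g_{\mathbb{H}}$ in which the CCE filling of a conformal infinity near round is unique modulo diffeomorphisms fixing $\mathbb{S}^3$. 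The base case is the round sphere itself, whose unique filling is $(\mathbb{B}^4, g_{\mathbb{H}})$.

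Next I would set up the contradiction. Suppose uniqueness fails; then there is a sequence $[\hat g_i] \to [g_c]$ in $C^{3,\alpha}$ and, for each $i$, a conformally compact Einstein filling $g_i^+$ on $\mathbb{B}^4$ realizing $[\hat g_i]$ that is \emph{not} gauge-equivalent to the Graham-Lee solution. Since $[\hat g_i]$ converges to the round class, $Y(\partial X, [\hat g_i]) \to Y(\mathbb{S}^3, [g_c])$, so for $i$ large the hypothesis $(2''')$ of Theorem \ref{maintheorem3} holds with $\delta \le \delta_0$. Taking $\hat g_i$ to be the Yamabe representative and $g_i^*$ the associated FG compactification, the quantitative conclusion \eqref{Equ:close} supplies diffeomorphisms $\phi_i$ of $\bar{\mathbb{B}}^4$ fixing $\mathbb{S}^3$ with $\|\phi_i^* g_i^* - g_{FG}\|_{C^{k,\alpha}(\bar{\mathbb{B}}^4)} \to 0$.

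Then I would convert this $C^{k,\alpha}$ closeness of the FG compactifications into closeness of the underlying singular metrics $\phi_i^* g_i^+$ to $g_{\mathbb{H}}$ in the topology used by \cite{GL}. Writing $g_i^* = \rho_i^2 g_i^+$ with $\rho_i$ the FG defining function solving $-\Delta_{g_i^+} w_i = 3$, and comparing with the model $g_{FG} = e^{1-|x|^2}|dx|^2$, uniform $C^{k,\alpha}$ control of $g_i^*$ together with $C^{3,\alpha}$ control of $\hat g_i$ controls $\rho_i$, hence $g_i^+ = \rho_i^{-2} g_i^*$; passing to the geodesic defining function and to the Graham-Lee gauge I would conclude $\phi_i^* g_i^+ \to g_{\mathbb{H}}$ in the appropriate weighted norm. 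For $i$ large this places $\phi_i^* g_i^+$ inside $\mathcal{V}$, so by local uniqueness it must coincide with the Graham-Lee filling up to a boundary-fixing diffeomorphism, contradicting the choice of $g_i^+$ and proving the theorem.

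The hard part will be this last conversion and the matching of gauges. Theorem \ref{maintheorem3} controls the FG compactification $g^*$ on the closed ball in $C^{k,\alpha}$, whereas Graham-Lee's local uniqueness is formulated for the singular metric $g^+$ in geodesic normal form with weighted regularity; I would need uniform control of the FG defining function $\rho_i$ and of its conversion to the geodesic defining function, and a careful alignment of the two diffeomorphism gauges (both normalized to fix the boundary), so that the a priori distinct fillings are compared in one common gauge before local uniqueness is invoked. I would also have to check that the derivative loss in passing between $g^*$ and $g^+$ is compatible with the regularity $k \ge 3$ permitted by the hypotheses.
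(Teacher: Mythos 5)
Your overall skeleton---argue by contradiction, invoke the quantitative compactness of Theorem \ref{maintheorem3} under ($2^{\prime\prime\prime}$) to get $\|\phi_i^*g_i^* - g_{FG}\|_{C^{k,\alpha}}\to 0$ via \eqref{Equ:close}, then conclude with a weighted-space implicit function theorem argument in the Graham--Lee/Lee gauge---is the same as the paper's, which compares two arbitrary fillings $g_i^+$, $h_i^+$ of the same conformal infinity rather than each filling against the Graham--Lee solution (an immaterial difference, since the Graham--Lee solution is itself such a filling). The genuine gap is precisely the step you flag as ``the hard part'' and then leave unresolved, and as you have set it up it fails. You propose to place $\phi_i^*g_i^+$ in a weighted neighborhood $\mathcal{V}$ of $g_{\mathbb{H}}$; but $g_i^+$ and $g_{\mathbb{H}}$ have \emph{different} conformal infinities ($\hat g_i$ versus the round metric), so $\|\phi_i^*g_i^+ - g_{\mathbb{H}}\|_{C^{2,\alpha}_\delta}$ does not tend to zero for any positive weight $\delta$: the weighted norm magnifies the $O(1)$ boundary mismatch by $\rho^{-\delta}$. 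The two metrics being compared in the weighted norm must have the same conformal infinity, which is why the paper compares $g_i^+$ with a second exact filling $h_i^+$ of $\hat g_i$ and never with $g_{\mathbb{H}}$ itself.

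Even after that repair, plain $C^{k,\alpha}$ closeness of the compactifications on $\bar B^4$ is not sufficient: Lee's isomorphism theorems and the gauge-fixing both require smallness in $C^{2,\alpha}_\delta$ with $\delta\in (2+\alpha, 3)$---the lower bound $\delta > 2+\alpha$ is what guarantees, via $C^{3,\alpha}_\delta \subset C^{3,\alpha}_{(0)}$, that the gauge vector field produced by the inverse function theorem integrates to a genuine diffeomorphism of $\bar B^4$ fixing the boundary. Unweighted closeness corresponds to $\delta = 0$, and even a reference metric matching $\hat g_i$ with totally geodesic boundary only gets you to weight $2$, still short of $2+\alpha$. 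The missing idea is the paper's Theorem \ref{weighted-conv}, resting on Lemma \ref{weighted-convbis}: two Bach-flat, $Q$-flat compactifications with totally geodesic boundary and the \emph{same} conformal infinity have Fermi-coordinate expansions $\hat g + g^{(2)}r^2 + g^{(3)}r^3+\cdots$ with identical $\hat g$ and identical $g^{(2)}$ (the order-$r^2$ term is local, determined by $\hat g$), so after aligning the boundary distance functions by boundary-fixing diffeomorphisms one has $\psi_i^*g_i^* - \phi_i^*h_i^* = O(r^3)$; this order-$r^3$ matching is what upgrades Cheeger--Gromov convergence to convergence in $C^{2,\alpha}_\delta$ for every $\delta<3$. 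Finally, note that ``local uniqueness'' is not available off the shelf from \cite{GL} in the form you invoke it: the paper must still carry out a two-step argument---first solve $\Psi(\tilde X_i)=0$ to put $h_i^+$ in Bianchi gauge relative to $g_i^+$ (using that $d\Psi(0)=\frac12(\nabla^*\nabla+3)$ is an isomorphism on $C^{3,\alpha}_\delta$ by \cite[Theorem C]{Lee1}), and then apply the implicit function theorem to Biquard's gauged operator $F$, whose linearization $D_1F=\frac12(\triangle_L+6)$ is an isomorphism on $C^{2,\alpha}_\delta$ by \cite{Lee1} and \cite[Lemma 12.71]{Besse}---to conclude $g_i^+=\varphi_i^*h_i^+$ for large $i$.
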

\begin{rema}\label{Rmk:section1} We remark
\begin{itemize}
\item First, in Theorems \ref{maintheorem1} and \ref{maintheorem2}, if we assume more assumptions on the topology  in (\ref{Con:3}) $H_1(X, \mathbb{Z})= 
H_2(X, \mathbb{Z})=0$, we can drop the assumption on the boundary $\partial X=\mathbb{S}^3$ (see \cite{CG}).
\item Second,  in Theorems \ref{maintheorem3}, we do not need the boundary condition $\partial X=\mathbb{S}^3$ for the compactness result in the first part; for the second part, when the boundary condition $\partial X=\mathbb{S}^3$ holds and $\delta_0$ is small enough, we have the topology of $X$ is the ball $\mathbb{B}^4$. Therefore,  Theorem \ref{uniqueness} holds even without the assumption $X=\mathbb{B}^4$.
\item Third, in Theorems \ref{maintheorem1} , \ref{maintheorem2} and \ref{maintheorem3}, we have the compactness results in $C^{2,\beta}$ topology for any $\beta\in (0,1)$ if we assume the compactness (or more generally boundedness) of the boundary metric $\hat{g}$ in $C^3$ topology (or even in $C^{2,\beta}$ topology). Hence, we could expect the uniqueness result in Theorem \ref{uniqueness} holds in the $C^3$ topology   (or even in $C^{2,\beta}$ topology) (thanks to \cite[Theorem 8.29]{GT}).
\end{itemize}
\end{rema}

The rest of this paper is organized as follows: In Section \ref{Sect:prelim}, we recall some basic ingredients in the proofs and list some of their key 
properties. In Section \ref{Sect:injrad}, we prove the injectivity radius estimates which is the major technical step in the
blow-up analysis in Riemannian geometry. In Section \ref{Sect:compactness}, we establish various compactness results for Fefferman-Graham's compactifications and prove Theorems \ref{maintheorem1}, \ref{maintheorem2} and \ref{maintheorem3}. Finally, in Section \ref{Sect:uniqueness}, we prove 
Theorem \ref{uniqueness} to obtain the global uniqueness for the conformally compact Einstein metrics on $\mathbb{B}^4$
constructed earlier in \cite{GL, Lee1}.\\

{\bf Acknowledge.} The authors thank Jason Lotay, Joel Fine and Felix Schulze for their comments of the first version of the draft. In particular, they point out a mistake in Lemma \ref{bdy-injrad} in that version.
 

\section{Preliminaries}\label{Sect:prelim}

\subsection{Fefferman-Graham's compactifications}\label{Subsect:rho}
Suppose that $X$ is a smooth 4-manifold with boundary $\partial X$ and $g^+$ is a conformally compact 
Einstein metric on $X$. Let $g^*=\rho^{2}g^+$ be the Fefferman-Graham's compactification, that is, $w:=\log \rho$ satisfies the equation (\ref{fg}). The function $\rho$ was first introduced in \cite{FG} then in  \cite{CQY06} to study the renormalized volume.
We recall some basic calculations for curvatures under conformal changes. Write 
$g^+ = r^{-2} g$ for some defining function $r$ and calculate
$$
Ric[g^+] = Ric[g] + 2r^{-1} \nabla^2 r + (r^{-1} \triangle r - 3r^{-2} |\nabla r|^2) g.
$$
Then one has
$$
R[g^+] = r^2 (R[g] + 6r^{-1} \triangle r -12r^{-2} |\nabla r |^2).
$$
Here the covariant derivatives is calculated with respect to the metric $g$ (or Fefferman-Graham's compactification $g^*$ in the following). Therefore, for a Fefferman-Graham's compactification $g^*$ of a conformally compact Einstein metric $g^+$, one has 
\beq \label{relation3}
R[g^*] = 6\rho^{-2}(1-  |\nabla\rho|^2),
\eeq
which in turn gives
\beq
\label{relation5}
Ric[g^*]=- 2\rho^{-1} \nabla^2 \rho
\eeq
and
\beq
\label{relation4}
R[g^*] = - 2\rho^{-1} \triangle \rho.
\eeq
Now we recall 

\begin{lemm} (\cite{casechang}, \cite[Lemma 4.2]{CG})
\label{lem4.1}
Suppose that $X$ is a smooth 4-manifold with boundary $\partial X$ and $g^+$ is a conformally compact Einstein metric on 
$X$ with the conformal infinity $(\p X, [\hat g])$ of nonnegative Yamabe type. Let $g^*=\rho^2g^+$ be the Fefferman-Graham's 
compactification associated with the Yamabe metric $\hat g$ of conformal infinity. Then the scalar curvature $R[g^*]$ is positive in 
$X$. In particular, 
\beq\label{estimate-rho}
\|\nabla\rho \| [g^*] \le 1.
\eeq
\end{lemm}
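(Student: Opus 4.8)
The plan is to reduce the whole statement to the positivity of $R[g^*]$ in the interior, and to prove that by propagating positivity from the boundary inward. The final inequality \eqref{estimate-rho} is then immediate: by \eqref{relation3} we have $R[g^*]=6\rho^{-2}(1-|\nabla\rho|^2_{g^*})$ with $\rho>0$ in $X$, so $R[g^*]>0$ in the interior forces $|\nabla\rho|_{g^*}<1$ there, while $|\nabla\rho|_{g^*}\to1$ at $\partial X$ (this limit is forced by the boundedness of $R[g^*]$ as $\rho\to0$); hence $\|\nabla\rho\|[g^*]\le1$ on all of $\overline X$. So everything rests on showing $R[g^*]>0$ in $X$.

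I would first record two structural identities. Differentiating the defining equation $-\Delta_{g^+}\log\rho=3$ gives $\Delta_{g^+}\rho=(|\nabla\rho|^2_{g^*}-3)\rho$, equivalently $(-\Delta_{g^+}-2)\rho=\tfrac16\rho^3R[g^*]$; thus $R[g^*]>0$ is exactly the assertion that the positive function $\rho$ is a supersolution for the conformal Laplacian $L_{g^+}=-\Delta_{g^+}-2$ of the Einstein metric. Second, since $g^*$ is the Fefferman--Graham compactification, Lemma \ref{fgm2} gives $Q_{g^*}\equiv0$, which by \eqref{Q-curvature} reads $\Delta_{g^*}R[g^*]=\tfrac14R[g^*]^2-3|E|^2$, where $E:=Ric[g^*]-\tfrac14R[g^*]\,g^*$ is the trace-free Ricci; by \eqref{relation5} one has $E=-2\rho^{-1}(\nabla^2\rho-\tfrac14(\Delta\rho)g^*)$.

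Next I would settle the boundary. Using the Fefferman--Graham expansion of Lemma \ref{fgmetric} (so that $\rho=r(1+O(r^2))$) together with the Gauss equation for the totally geodesic boundary, a direct computation gives $R[g^*]\big|_{\partial X}=3R[\hat g]$. Since $\hat g$ is the Yamabe representative of a conformal infinity of nonnegative Yamabe type, $R[\hat g]\ge0$, so $R[g^*]\ge0$ on $\partial X$, with strict positivity in a collar $\partial X\times[0,\epsilon)$ in the positive case. (This matches the model $(\mathbb B^4,g_{\mathbb H})$, where $R[g^*]|_{\partial X}=3R[\hat g]=18>0$.) Consequently a negative infimum of $R[g^*]$, should it occur, must be attained at an interior minimum point.

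The crux, and the step I expect to be the main obstacle, is to rule out such an interior minimum. A purely local maximum principle fails: at an interior minimum of $R[g^*]$ the identity $\Delta_{g^*}R[g^*]=\tfrac14R[g^*]^2-3|E|^2$ only yields $\tfrac14R[g^*]^2\ge3|E|^2$, which is never contradictory; equivalently, the Bochner formula for $|\nabla\rho|^2_{g^*}$ produces the term $|\nabla^2\rho|^2=\tfrac14\rho^2|Ric[g^*]|^2$ with the unfavorable sign, and even the sharpened bound $|Ric[g^*]|^2\ge\tfrac13R[g^*]^2$ — valid because $\nabla\rho$ becomes a null eigenvector of $Ric[g^*]$ at a critical point of $|\nabla\rho|^2_{g^*}$ — falls just short near the threshold value $1$. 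I would therefore bring in the global consequence of the hypothesis: by Lee's theorem the nonnegative Yamabe type of $(\partial X,[\hat g])$ gives $\lambda_0(-\Delta_{g^+})=n^2/4=\tfrac94$, i.e. $L_{g^+}=-\Delta_{g^+}-2$ is a positive operator, a property invariant under the conformal change to $g^*$. Testing this positivity against $\rho^{3/2}$ with a logarithmic cutoff already yields the integrated positivity $\int_X\rho\,R[g^*]\,dv_{g^*}\ge0$; combining this global input with the boundary positivity $R[g^*]|_{\partial X}=3R[\hat g]\ge0$, I would run the maximum principle for $R[g^*]$ to exclude an interior point where $R[g^*]<0$, and finally upgrade to the strict interior positivity $R[g^*]>0$ by the strong maximum principle. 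I expect the delicate point to be precisely this passage from the integrated and boundary positivity to the pointwise interior statement, where the nonnegative Yamabe hypothesis must enter in an essential, non-local way.
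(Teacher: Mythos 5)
You should first note that this paper contains no proof of Lemma \ref{lem4.1}: it is recalled verbatim from \cite{casechang} and \cite[Lemma 4.2]{CG}, so the comparison must be with the cited proof. Your preparatory steps are correct and do match identities used in that circle of ideas: the reduction of \eqref{estimate-rho} to $R[g^*]>0$ via \eqref{relation3}; the identity $(-\Delta_{g^+}-2)\rho=\frac16\rho^3R[g^*]$, exhibiting $R[g^*]>0$ as a supersolution property of $\rho$ for the conformal Laplacian of $g^+$; the boundary value $R[g^*]|_{\partial X}=3R[\hat g]\ge 0$ (this is exactly the boundary condition appearing in \eqref{ellipticsystem}); and the integrated inequality $\int_X\rho\,R[g^*]\,dv_{g^*}\ge 0$, which does follow from Lee's theorem $\lambda_0(-\Delta_{g^+})=\frac94$ tested against $\rho^{3/2}$ with a logarithmic cutoff, since $(-\Delta_{g^+})\rho^{3/2}=\frac94\left(2-|\nabla\rho|^2_{g^*}\right)\rho^{3/2}$ and the cutoff error tends to zero. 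Your diagnosis that the purely local arguments are exactly borderline is also accurate.

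However, there is a genuine gap, and it sits at the only step that carries the content of the lemma: the sentence ``I would run the maximum principle for $R[g^*]$ to exclude an interior point where $R[g^*]<0$'' is not an argument, and it cannot be completed as proposed. Your own computations show why: at an interior negative minimum of $R[g^*]$ the $Q$-flat equation yields only $\frac14R[g^*]^2\ge 3|E|^2$, which is never contradictory, and the refined Bochner estimate (using that $\nabla\rho$ is a null direction of $Ric[g^*]$ at a critical point of $|\nabla\rho|^2_{g^*}$, by \eqref{relation5}) returns precisely the threshold value $1$ with equality admissible. The global input you extracted is a single scalar constraint, $\int_X\rho\,R[g^*]\,dv_{g^*}\ge 0$; an integral sign cannot be localized and cannot be fed into any maximum principle --- it is perfectly compatible with an interior region where $R[g^*]<0$ compensated by positivity elsewhere, and the boundary nonnegativity $R[g^*]|_{\partial X}=3R[\hat g]\ge 0$ does not change this. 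So nothing in the proposal excludes the bad configuration, and the passage you yourself flag as ``delicate'' is exactly the unproved assertion. In the cited sources the positivity is obtained by a different, genuinely global mechanism: Lee's spectral theorem $\lambda_0=\frac94>s(3-s)$ guarantees existence and positivity of solutions $v_s$ of $-\Delta_{g^+}v_s=s(3-s)v_s$ for $s\in(\frac32,3]$, and the scalar curvature of the associated adapted compactifications is controlled through this family (for instance, the $s=2$ compactification $v_2^2g^+$ is scalar-flat, as the analogue of your identity shows), with the FG metric recovered at $s=3$; no maximum principle applied directly to $R[g^*]$ appears. As written, your proposal proves the boundary sign and an integrated sign, but not the pointwise positivity claimed in the lemma.
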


\subsection{Elliptic estimates for Bach-flat and Q-flat metrics}
Next we recall from \cite{CG} the $\varepsilon$-estimates for Fefferman-Graham's compactifications $g^*$ of 
conformally compact Einstein metrics $g^+$. We will continue to use 
the 2-tensor $S$ when deriving estimates for Fefferman-Graham's compactifications, which are Bach-flat and $Q$-flat metrics. 
We first recall  Bach equations in 4 dimensions for Bach-flat metrics:
\beq
\label{Bach-eq-1}
\Delta A_{ij} - \frac 16 R_{,i j} + R_{ikjl}A^{kl} - R_{ik}A^k_{\ j} + \frac 12 W_{ikjl}A^{kl} = 0,
\eeq
where 
$$
A_{ij} = \frac 12 (R_{ij} - \frac 16 R\, g_{ij})
$$ 
is the Schouten tensor, $R_{ikjl}$ and $W_{ikjl}$ are Riemann and Weyl curvature tensors respectively, and 
$Q$-flat equation:
\beq \label{Q-eq}
\Delta R = 3|\operatorname{Ric}|^2 - R^2
\eeq
the light of \eqref{Q-curvature}. One may use Bach equations coupled with $Q$-flat equation to derive estimates for the Schouten tensor. 
To see Bach equations coupled with $Q$-flat equation also provide estimates of Weyl curvature, one may rewrite Bach 
equation as follows:
\beq\label{Bach-eq-2}
\Delta W_{ijkl} + \nabla_l C_{kji} + \nabla_k C_{lij} + \nabla_iC_{jkl} +\nabla_jC_{ilk} : = K_{ijkl},
\eeq
where $C_{ijk} = A_{ij,k} - A_{ik,j}$ is the Cotton tensor and $K = W*Rm +g^**W*A$ is a quadratic of curvatures (cf. \cite[(2.5)]{CG}). Finally, to get estimates for 
the full Riemann curvature tensor $Rm$, one recalls
that
$$
Rm = W + g^*\owedge A.
$$
The most important analytic tools for elliptic estimates here are Sobolev inequalities. The conditions (4) and (5) in 
Section \ref{introduction and statement of results} (cf. \cite[Theorem 1.1]{CG})
essentially provide the following {\it Sobolev inequality} and  {\it trace Sobolev inequality} respectively:
\beq\label{Sobolev}
(\int_X |u|^\frac {2n}{n-2} d\text{vol}[g^*])^\frac {n-2}n \leq C_s \int_X (|\nabla u|^2) d\text{vol}[g^*]
\eeq
and
\beq\label{trace-Sobolev}
(\oint_{\partial X} |u|^\frac{2(n-1)}{n-2}d\text{vol}[\hat g])^\frac {n-2}{n-1} \leq C_b \int_X (
|\nabla u|^2
)d\text{vol}[g^*]
\eeq
for all $u\in C^1_0(B(p, r_0))$, where $p$ is any point in $\bar X$ and $r_0> 0$ is fixed. Moreover, a global {\it trace Sobolev inequality} holds
\beq\label{trace-Sobolevbis}
(\oint_{\partial X} |u|^\frac{2(n-1)}{n-2}d\text{vol}[\hat g])^\frac {n-2}{n-1} \leq C_b' \int_X (
|\nabla u|^2 + |u|^2
)d\text{vol}[g^*]
\eeq
for all $u\in C^1_0(X)$.

\begin{lemm} \label{epsilonregularity1}(\cite[Theorem 3.4]{CG}) Suppose that $X$ is a smooth 4-manifold with boundary $\partial X$ and 
$g^+$ is a conformally compact Einstein metric on $X$ with the conformal infinity of positive Yamabe type. Let $g^*=\rho^2g^+$ be the 
Fefferman-Graham's compactification associated with the Yamabe metric of the conformal infinity. Assume the Sobolev inequality 
\eqref{Sobolev} holds for the Fefferman-Graham's compactification $g^*$. Then there exists constants $\varepsilon > 0$ and $C_k >0$ 
such that if 
$$
\|Rm\|_{L^2(B(p,r))}\le \varepsilon
$$
for a geodesic ball $B(p, r)\subset X$, then, for each $k = 0, 1, 2, \cdots$, 
\beq\label{Bach-estimates}
 \sup_{B(p, r/2)} |\nabla^{k} Rm| \le  \frac{C_k}{r^{k+2}}\left(\int_{B(p,r)}|Rm|^2 d\text{vol}[g^*]\right)^{\frac12}.
 \eeq  
\end{lemm}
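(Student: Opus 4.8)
The plan is to prove this as an $\varepsilon$-regularity statement in the spirit of Anderson and Tian--Viaclovsky, exploiting that the Fefferman--Graham compactification $g^*$ is both Bach-flat and $Q$-flat. The heart of the matter is to turn the two fourth-order conditions into a \emph{closed second-order elliptic system for the curvature}. Concretely, I would combine the Bach equation in its Weyl form \eqref{Bach-eq-2}, the Bach equation for the Schouten tensor \eqref{Bach-eq-1}, and the $Q$-flat equation \eqref{Q-eq} with the second Bianchi identity (which rewrites the Cotton tensor $C_{ijk}$ as a divergence of $W$ and expresses $\Delta Rm$ through $\nabla^2 Ric$ plus a quadratic curvature term). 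Decomposing $Rm$ into its Weyl part and its Schouten part, the upshot should be a system of schematic form $\Delta_{g^*} Rm = Rm * Rm$, where the $Q$-flat condition plays exactly the role that constant scalar curvature plays in the Tian--Viaclovsky setting: it controls the trace/scalar part so that the Hessian term $R_{,ij}$ appearing in \eqref{Bach-eq-1} is reduced to a controlled forcing rather than an uncontrolled third-order quantity.

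From this structure equation I would deduce, via the Kato inequality $|\nabla|Rm||\le|\nabla Rm|$ and Cauchy--Schwarz, the weak differential inequality
\begin{equation*}
\Delta|Rm| \ge -c\,|Rm|^{2}
\end{equation*}
on $B(p,r)$. Writing this as $\Delta|Rm| \ge -(c|Rm|)\,|Rm|$, the function $|Rm|$ is a subsolution with potential $V:=c|Rm|$ lying in $L^{n/2}=L^{2}$ (here $n=4$), and by hypothesis $\|V\|_{L^2(B(p,r))}\le c\varepsilon$ is small. This is precisely the regime in which Moser iteration, fed by the Sobolev inequality \eqref{Sobolev}, converges: choosing $\varepsilon$ below the universal iteration threshold yields the $k=0$ case
\begin{equation*}
\sup_{B(p,r/2)}|Rm| \le \frac{C}{r^{2}}\Big(\int_{B(p,r)}|Rm|^{2}\,d\text{vol}[g^{*}]\Big)^{1/2},
\end{equation*}
which is consistent with the scaling under $g^*\mapsto\lambda^2 g^*$, since $\int|Rm|^2$ and $r^2\sup|Rm|$ are scale-invariant.

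For the higher-order bounds $k\ge 1$ I would bootstrap by rescaling so that $r=1$, on which scale $|Rm|\le C$ and $\|Rm\|_{L^2}\le\varepsilon$. The $L^\infty$ curvature bound together with the volume non-collapsing that is built into the Sobolev inequality \eqref{Sobolev} furnishes a lower bound on the harmonic radius, hence harmonic coordinates on balls of definite size in which $g^*$ is bounded in $C^{1,\alpha}$. In these coordinates the structure equation $\Delta_{g^*}Rm = Rm*Rm$ is uniformly elliptic with right-hand side now bounded in $L^\infty$; interior $L^p$ and Schauder estimates give $Rm\in C^{1,\alpha}$, and differentiating the system $k$ times and iterating the interior estimates yields $\sup|\nabla^k Rm|\le C_k$. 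Undoing the scaling produces \eqref{Bach-estimates}.

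The main obstacle is the first step: genuinely closing the curvature equations into the second-order elliptic form $\Delta Rm = Rm * Rm$. One must carefully reorganize the third-order Cotton-tensor terms in \eqref{Bach-eq-2} and the Hessian term $R_{,ij}$ in \eqref{Bach-eq-1} using the contracted Bianchi identities, and use the $Q$-flat equation \eqref{Q-eq} to absorb the scalar-curvature contributions; this is the point where Bach-flatness and $Q$-flatness are used in an essential and coupled way. A secondary technical point is verifying that the smallness constant $\varepsilon$ can be chosen universally (independent of $p$ and $r$), which reduces to the uniformity of the Sobolev constant $C_s$ in \eqref{Sobolev}.
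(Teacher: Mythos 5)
Your overall architecture (Tian--Viaclovsky style $\varepsilon$-regularity: Moser iteration fed by the Sobolev inequality \eqref{Sobolev}, then bootstrap in harmonic coordinates for $k\ge 1$) is the right family of argument, and your $k=0$ scaling check and the remark that the uniformity of $\varepsilon$ reduces to the uniformity of $C_s$ are correct. But the pivotal first step --- closing Bach-flatness and $Q$-flatness into a pointwise schematic equation $\Delta Rm = Rm * Rm$, so that Kato gives $\Delta |Rm| \ge -c|Rm|^2$ --- fails, and you have correctly identified it as the main obstacle without resolving it. In \eqref{Bach-eq-1} the forcing is the full Hessian $R_{,ij}$, while the $Q$-flat equation \eqref{Q-eq} controls only its trace $\Delta R$; there is no pointwise identity converting the trace-free part of $\nabla^2 R$ into quadratic curvature. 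Likewise, the terms $\nabla C$ in \eqref{Bach-eq-2} are genuinely second order in curvature: by the second Bianchi identity in dimension four, $C_{ijk}$ equals (up to a dimensional constant) $\nabla^l W_{ijkl}$, so $\nabla C \sim \nabla^2 W$ belongs to the principal second-order part of the operator acting on curvature and cannot be shuffled into $Rm*Rm$ by any algebraic reorganization via Bianchi identities. This is visible in the paper itself: in the system \eqref{ellipticsystem} the Weyl equation retains the term $L(Ric)$ with $L$ ``a linear differential operator of second order'' --- exactly the term your proposed reduction would have to eliminate. Consequently no subsolution inequality $\Delta|Rm|\ge -c|Rm|^2$ is available, and the Moser iteration as you set it up does not start.

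The cited proof (\cite[Theorem 3.4]{CG}, whose ingredients the paper records in \eqref{Bach-eq-1}, \eqref{Q-eq}, \eqref{Bach-eq-2}) instead keeps the equations as a coupled, triangular system and runs the estimates hierarchically: first the scalar equation \eqref{Q-eq}, whose right-hand side is honestly quadratic, gives control of $R$; then \eqref{Bach-eq-1} is treated with the Hessian $\nabla^2 R$ as a forcing term handled by $L^p$/$W^{2,p}$ elliptic theory rather than pointwise; then in \eqref{Bach-eq-2} the third-order Cotton terms, being in divergence form, are integrated by parts onto the test function ($W|W|^{p-2}\eta^2$ or the like) inside a weak Moser iteration, with the smallness hypothesis $\|Rm\|_{L^2(B(p,r))}\le\varepsilon$ used to absorb the resulting quadratic contributions. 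Once the $L^\infty$ bound is in place, your higher-order bootstrap --- harmonic coordinates of definite size from non-collapsing plus the curvature bound, then interior $L^p$ and Schauder estimates applied to the (now genuinely subcritical) system --- goes through essentially as you describe. So the gap is localized but essential: you must replace the pointwise Kato reduction by the coupled-system iteration with integration by parts (equivalently, the $R\to A\to W$ hierarchy of $W^{2,p}$ estimates), since the claimed closed equation $\Delta Rm=Rm*Rm$ is simply not a consequence of Bach-flatness and $Q$-flatness.
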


\begin{lemm} \label{epsilonregularity2} (\cite[Theorem 3.1 and Theorem 3.2]{CG}) Suppose that $X$ is a 
smooth 4-manifold with boundary $\partial X$ and 
$g^+$ is a conformally compact Einstein metric on $X$ with the conformal infinity of positive Yamabe type. Let $g^*=\rho^2g^+$ be the 
Fefferman-Graham's compactification associated with the Yamabe metric of the conformal infinity. Assume the Sobolev inequalities \eqref{Sobolev} and \eqref{trace-Sobolev} hold for the Fefferman-Graham's compactification $g^*$. Then there exists constants 
$\varepsilon >0$ and $C _k > 0$ such that if 
$$
\|Rm\|_{L^2(B(p,r))}\le \varepsilon
$$
for a geodesic ball $B(p, r) \subset \bar X$, then, for each $k = 0, 1, 2, \cdots$, 
\beq\label{Bach-estimates-b}
 \sup_{B(p, r/2)} |\nabla^{k} Rm|  \le  \frac{C_k}{r^{k+2}}\left(\int_{B(p, r)}|Rm|^2 d\text{vol}[g^*] 
 + \oint_{B(p, r)\cap \partial X} |S|d\text{vol} [\hat g] + \text{vol}(B(p, r))\right)^{\frac12}.
\eeq  
\end{lemm}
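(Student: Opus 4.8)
The plan is to carry out an $\varepsilon$-regularity argument of De Giorgi--Nash--Moser type for the coupled curvature system, with the boundary contribution controlled through the totally geodesic structure of the Fefferman--Graham compactification. First I would assemble the governing equations into a single second order elliptic system for the full curvature $Rm$. The Schouten tensor obeys the Bach equation \eqref{Bach-eq-1}, the scalar curvature obeys the $Q$-flat equation \eqref{Q-eq}, and the Weyl tensor obeys the divergence-form Bach equation \eqref{Bach-eq-2}; since the right-hand sides are schematically $Rm\ast Rm$ together with lower order terms built from $g^*$ (whose scalar curvature is positive and bounded by Lemma \ref{lem4.1}), and since the trace part $\tr A=\tfrac16 R$ is governed by \eqref{Q-eq}, the whole system closes into the form $\Delta Rm = Rm\ast Rm + (\text{lower order})$. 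Via Kato's inequality this yields, in the weak sense, a differential inequality $\Delta|Rm|\ge -c|Rm|^2 - c|Rm|$, the linear term recording the $g^*$-dependent lower order contributions.

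For a ball $B(p,r)$ contained in the interior the conclusion is exactly Lemma \ref{epsilonregularity1}: Moser iteration on this differential inequality, using the Sobolev inequality \eqref{Sobolev} and the smallness $\|Rm\|_{L^2(B(p,r))}\le\varepsilon$ to absorb the quadratic term, produces $\sup_{B(p,r/2)}|Rm|\le C r^{-2}\|Rm\|_{L^2(B(p,r))}$, after which elliptic bootstrapping gives the $\nabla^k$ bounds. The genuinely new content is the case when $B(p,r)$ meets $\partial X$, and here I would exploit that $g^*$ has totally geodesic boundary (Lemma \ref{fgm2}), so that the second fundamental form vanishes and the boundary integrals arising when integrating by parts collapse to normal-derivative-of-curvature terms. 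The cleanest way to see which quantity appears is to double $(\bar X, g^*)$ across the totally geodesic boundary: in Fermi coordinates the odd first normal derivative of the metric vanishes, so the doubled metric is continuous with continuous curvature, but the \emph{third} normal derivative is odd and jumps, i.e. the normal derivative of $Rm$ jumps across $\partial X$ by an amount proportional to the $S$ tensor. Consequently the doubled metric solves the Bach/$Q$-flat system only distributionally, with a source supported on $\partial X$ of size $\oint_{B(p,r)\cap\partial X}|S|\,d\text{vol}[\hat g]$; feeding the doubled energy $2\int_{B(p,r)}|Rm|^2\,d\text{vol}[g^*]$ plus this defect into the interior iteration reproduces the right-hand side of \eqref{Bach-estimates-b}, while $\text{vol}(B(p,r))$ comes from the linear lower order term in the differential inequality.

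Alternatively, and more directly, I would keep the boundary and run the iteration with cutoffs supported in $B(p,r)\cap\bar X$, bounding the resulting boundary integrals by the trace Sobolev inequality \eqref{trace-Sobolev}; the totally geodesic condition together with $Q_{g^*}\equiv 0$ guarantees that the only surviving boundary datum is the $S$ tensor, which is precisely why \eqref{Bach-estimates-b} differs from the interior estimate \eqref{Bach-estimates} by the two extra terms. Once $\sup_{B(p,r/2)}|Rm|$ is controlled in this way, the higher derivative bounds follow by applying interior and boundary Schauder (or $W^{k,p}$) estimates to $\Delta Rm = Rm\ast Rm + (\text{lower order})$ and rescaling, which supplies the $r^{-(k+2)}$ homogeneity.

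I expect the main obstacle to be the bookkeeping of the boundary terms: one must check that, after integrating by parts against the iteration test functions, every boundary contribution is either annihilated by the totally geodesic and $Q$-flat structure or dominated by $\oint|S|$, with no loss of derivatives, and that the trace Sobolev constant enters compatibly with absorbing the quadratic curvature term under the smallness hypothesis. The doubling viewpoint is the most transparent explanation for why $S$ is the correct boundary quantity, but making it rigorous demands care about the limited regularity of the doubled metric, since $S$ is exactly the obstruction to the doubled curvature being $C^1$ across $\partial X$.
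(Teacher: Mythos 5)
Your second, direct variant (cutoffs supported in $B(p,r)\cap \bar X$, Moser iteration run off the coupled Bach/$Q$-flat system, quadratic term absorbed via the smallness hypothesis through \eqref{Sobolev} and \eqref{trace-Sobolev}, then rescaled Schauder bootstrap for the $\nabla^k$ bounds with the $r^{-(k+2)}$ homogeneity) is essentially the argument of the cited proof: note that the present paper does not reprove the lemma but quotes it from \cite[Theorems 3.1 and 3.2]{CG}, whose scheme is exactly this integral iteration with the boundary terms expressed through the $S$ tensor. Your doubling picture is a good heuristic for \emph{why} $S$ is the right boundary quantity --- the doubled metric is only $C^{2,\alpha}$ across $\partial X$, with $\nabla Rm$ jumping by an amount proportional to $S$, so the distributional equation acquires a surface layer of strength $\oint |S|$ --- but, as you concede, it is harder to make rigorous than the direct argument and is not the route taken.

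Two points in your bookkeeping need correction. First, the system does not close pointwise into $\Delta Rm = Rm\ast Rm + (\text{lower order})$: the Weyl equation \eqref{Bach-eq-2} carries first derivatives of the Cotton tensor, i.e.\ second derivatives of $A$, and the Schouten equation \eqref{Bach-eq-1} carries $\nabla^2 R$; these are of the same order as the principal part and must be handled integrally, by moving derivatives onto the test functions and using the coupling with \eqref{Q-eq} --- so the Kato-inequality shortcut $\Delta|Rm|\ge -c|Rm|^2-c|Rm|$ is not available as stated, though your integration-by-parts formulation is. Second, your claim that ``the only surviving boundary datum is the $S$ tensor'' is not accurate, and your attribution of the $\text{vol}(B(p,r))$ term to an interior linear lower-order term misses its actual source: the paper's own display \eqref{ellipticsystem} shows nonzero Dirichlet data on $\partial X$, namely $R=3\hat R$, $A_{\alpha\beta}=\hat A_{\alpha\beta}$, $A_{nn}=\hat R/4$ (only $W$ vanishes on the boundary). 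These bounded inhomogeneous boundary data, coming from the fixed Yamabe representative of the conformal infinity, are precisely what generate the $\text{vol}(B(p,r))$ term on the right-hand side of \eqref{Bach-estimates-b}; this also explains why that term, unlike the other two, is not scale-invariant and becomes harmless after the blow-up rescalings in which the estimate is applied. With these two repairs your direct argument matches the cited proof.
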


\subsection{Sobolev inequalities}
In the above estimates, Sobolev inequalities are essential.  We need to control the constants $C_s$ and $C_b$ in Sobolev inequalities \eqref{Sobolev} and \eqref{trace-Sobolev} in terms of other geometric quantities in Riemannian geometry. For the convenience of readers, we recall the following 
notions of injectivity radii for a Riemannian manifold $(X, g)$ with boundary $\partial X$. For any interior point $p\in X$, let $i_{\text{int}}(p, g)$
be the supremum of $r$ such that the normal geodesic $\gamma (t)$ from $p$ is minimizing for any $t\in [0, \min\{r, t_\gamma\}]$, where 
$t_\gamma$ is the first intersection of $\gamma$ with the boundary $\partial X$. Then the interior injectivity radius is defined by
$$
i_{\text{int}} (X, g) = \inf \{i_{\text{int}}(p, g): p\in X\}.
$$
For $p\in \partial X$, let $i_{\partial}(p, g)$ be the supremum of $r$ such that the normal geodesic $\gamma$ from $p$ in the inward unit normal direction $\nu_p$ is minimizing for any $t\in [0, r]$. Then the boundary injectivity radius is defined by
$$
i_\partial (X, g) = \inf \{i_{\partial} (p, g): p \in \partial X\}.
$$
The other equivalent definition for the boundary injectivity radius is that $i_\p (X, g)$ is the supremum of the height $h$ of the Fermi coordinates
from the boundary $\p X$ in $X$: 
$$\text{exp}_p (s\nu_p): \p X\times [0, h) \to X
$$
for $p\in \p X$ and $s\in [0, h)$ (cf. \cite{Kodani} \cite[Section3.6
]{Chavel}). 

\begin{lemm} \label{lem4.2}
Let $(X^n, g)$ be a complete Riemannian $n$-manifolds with totally geodesic boundary. Suppose that $|\operatorname{Rm}| \leq k $ 
and that 
$$
i_{\text{int}}(X, g) \ge i_0, \quad i_\p (X, g) \ge i_0, \quad \text{and} \quad i(\p X) \ge i_0.
$$
for a positive constant $i_0$, where $i(\p X)$ is the intrinsic injectivity radius of the boundary. Then the Sobolev inequalities \eqref{Sobolev} and \eqref{trace-Sobolev} (resp. \eqref{trace-Sobolevbis}) hold for uniform constants $C_s$ and $C_b$ (resp. $C_b'$).
\end{lemm}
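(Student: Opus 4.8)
The plan is to reduce both inequalities to their flat counterparts on balls of a fixed small radius, using the hypotheses to build coordinate charts in which the metric is uniformly two-sided comparable to the Euclidean metric, and then to patch these local estimates together for the global statement \eqref{trace-Sobolevbis}. First I would record uniform two-sided volume bounds. Set $r_0=\tfrac14\min\{i_0,\pi/\sqrt{k}\}$. The Bishop--Gromov comparison, driven by $|Rm|\le k$, gives $\mathrm{vol}(B(p,r)\cap X)\le C r^n$; the matching lower bound $\mathrm{vol}(B(p,r)\cap X)\ge c r^n$ for all $p\in\bar X$ and $r\le r_0$ follows from the fact that on this scale the relevant exponential map is a diffeomorphism onto its image with Jacobian bounded away from $0$ and $\infty$ --- in the interior from $i_{\mathrm{int}}\ge i_0$, and within distance $r_0$ of $\partial X$ from the Fermi description below together with $i_\partial\ge i_0$ and $i(\partial X)\ge i_0$. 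These yield constants $c\le C$ depending only on $n,k,i_0$.

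Second, and this is the heart of the matter, I would produce the charts. For an interior point $p$ with $\mathrm{dist}(p,\partial X)\ge r_0$, geodesic normal coordinates on $B(p,r_0)$ together with the Rauch comparison theorem (using $|Rm|\le k$ and $i_{\mathrm{int}}\ge i_0$) give $\tfrac12\delta_{ij}\le g_{ij}\le 2\delta_{ij}$. For a point within $r_0$ of $\partial X$ I would use Fermi coordinates adapted to the boundary: writing $t=\mathrm{dist}(\cdot,\partial X)$ and using the intrinsic exponential map of $\partial X$ for the tangential variables, the totally geodesic hypothesis gives $g=dt^2+h_t(x')$ with $h_0=\hat g$ and $\partial_t h_t|_{t=0}=-2\,\mathrm{II}=0$, so $g$ is even in $t$ to first order and, by the same comparison argument, satisfies $\tfrac12\delta\le g\le 2\delta$ on a uniform half-ball. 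Equivalently, since $\mathrm{II}=0$ the reflection $t\mapsto -t$ extends $g$ to a $C^{1,1}$ metric on the double $DX=X\cup_{\partial X}X$ with curvature bounded by a multiple of $k$ and injectivity radius bounded below in terms of $i_0$, reducing every boundary ball to an interior ball.

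Third, the two-sided bound $\tfrac12\delta\le g\le 2\delta$ makes gradients and volume elements comparable, so the Euclidean Sobolev inequality transfers to \eqref{Sobolev} and the Euclidean trace theorem on the half-ball $\mathbb{R}^n_+$ transfers to \eqref{trace-Sobolev}, both with constants $C_s,C_b$ depending only on $n,k,i_0$. Finally, for \eqref{trace-Sobolevbis} I would take a maximal $r_0/2$-separated set $\{p_j\}\subset\bar X$, so that the balls $B(p_j,r_0/2)$ cover $\bar X$ while the volume bounds of the first step limit their overlap uniformly; a subordinate partition of unity $\{\chi_j\}$ with $|\nabla\chi_j|\le C/r_0$ then lets me sum the local trace inequalities, the cutoff-gradient terms being absorbed into the $\int_X|u|^2$ term on the right.

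The main obstacle is the boundary analysis of the second step: one must check simultaneously that the three injectivity-radius hypotheses produce Fermi charts of a uniform size covering an honest half-ball (interior focusing of nearby geodesics is controlled by $i_{\mathrm{int}}$, the normal height by $i_\partial$, and the tangential spread by $i(\partial X)$), and that the reflected metric genuinely has bounded curvature. The totally geodesic assumption is precisely what is needed here: it forces $\partial_t h_t|_{t=0}=0$, so no singular curvature layer forms along $\partial X$ under reflection and no anomalous term enters the comparison; without it the double would be merely Lipschitz and carry a concentrated curvature along the boundary.
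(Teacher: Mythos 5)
Your proposal is correct and follows essentially the same route as the paper: the paper likewise doubles $X$ across the totally geodesic boundary to get \eqref{Sobolev} (quoting Cantor's theorem for manifolds with bounded geometry), obtains uniform boundary coordinate charts for the local trace inequality \eqref{trace-Sobolev} (quoting Kodani's Theorem A), and proves the global inequality \eqref{trace-Sobolevbis} by exactly your patching argument, a covering by balls of fixed radius with Bishop--Gromov-controlled overlap and a partition of unity whose cutoff-gradient terms are absorbed into the $\int_X u^2$ term. The only difference is presentational: where the paper cites the literature for the chart estimates, you construct the normal and Fermi charts by hand via Rauch comparison and the reflection across the totally geodesic boundary, which is a sound, more self-contained version of the same argument.
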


\begin{proof} We consider the doubling $\tilde X = X\cup_{\p X} X$: the union of two copies of $X$ along the boundary $\p X$ 
where the second $X$ is the reflexion of $X$. It is easy to see that $i(\tilde X, \tilde g) \ge i_0$. Then, \eqref{Sobolev} (local and global) 
simply follows from \cite[Theorem 1]{Cantor} (see also \cite{Aubin} and other related results \cite[Theorem 3.14 and Lemma 3.17]{Hebey}).
\\

For the trace Sobolev inequality \eqref{trace-Sobolev}, one may first use \cite[Theorem A]{Kodani} to find uniform Lipschitz boundary local coordinate system in which the trace Sobolev inequality \eqref{trace-Sobolev} is valid with uniform constant $C_b$ at least for the local 
version. 
\\

To prove that \eqref{trace-Sobolevbis} holds globally, we work with a partition of unity associated with a countable coordinate chart covering 
$\{ B(x_i, \delta/2)\}$, where $(x_i)$ be a sequence of points in $\bar X$, such that 
$$
\bar X=\cup_i B(x_i,\delta/2) \mbox{ and }B(x_i,\delta/4) \cap B(x_j,\delta/4)=\emptyset\mbox{  if  }i\neq j.
$$
Then there exists $N=N(n,k,i_0)$, depending on $n, k, v$, such that  each point of $\tilde X$  has a neighborhood which intersects at 
most $N$ of the balls $B(x_i,\delta)$'s. This comes from Gromov-Bishop volume comparison theorem. Meanwhile, if let $K$ be the 
total number of $B(x_i, \delta/2)$ that intersects with $B(p, r_0)\cap\p X$, then $K$ depends only of $r_0$ and $\delta$.
\\ 

Let $\xi $ be some non-negative cut-off function such that $\xi(t)=1$ on $[0,\delta/2]$ and  $\xi(t)=0$ on $[3\delta/4,+\infty)$, 
and it satisfies $|\xi'|\le C/\delta$ on $[0,+\infty)$. Let $\alpha_i(x)=\xi(d(x,x_i))$ and $\eta_i=\alpha_i/\sum_m \alpha_m$. 
Let $u\in C^1(X)$. We can estimate
$$
\begin{array}{llll}
\ds \left(\oint_{\p X} |u|^{\frac{2(n-1)}{n-2}}d\sigma (g)\right)^{\frac{n-2}{n-1}}
& \le & \ds  (\sum_i  \left(\oint_{\p X} |\eta_i u|^{\frac{2(n-1)}{n-2}}d\sigma (g)\right)^{\frac{n-2}{n-1}})^2\\
&\le &\ds (\sum_i (\int_X |\nabla (\eta_i u)|^2dv (g))^\frac 12)^2 \\
&\le &\ds CKN \int_X( |\nabla u|^2+u^2)dv (g)
\end{array}
$$
Thus the proof is complete.
\end{proof}

\begin{rema} In the recent paper \cite{CLW}, the authors have established the remarkable inequality that 
$$
Y_b(X, \p X, [g^*])^2 \ge 6 Y(\p X, [\hat g])
$$
for any conformally compact Einstein manifold $(X, g^+)$ with its conformal infinity of positive Yamabe type.
We remark that as a direct consequence one can drop the assumption (5) in the statements of the main theorems in \cite{CG}  . In other words, the global trace-Sobolev inequality
(5) (therefore \eqref{trace-Sobolevbis}) is always available for any conformally compact Einstein manifold $(X, g^+)$ with its 
conformal infinity of positive Yamabe type. Thus the effort in the current paper is to drop the assumption (4), although the same procedure also works to drop assumption (5) in
the statements of both Theorem \ref{maintheorem1} and  Theorem \ref{maintheorem2}.
\end{rema}

\subsection{Cheeger-Gromov convergences for manifolds with boundary} Our approach to establish the compactness of 
conformally compact Einstein 
4-manifolds is to prove by contradiction. We will analyze and eliminate the causes of possible non-compactness by the method of blow-up.
This method has been essential and powerful in many compactness problems in geometric analysis, particularly in Riemannian geometry.
The fundamental tool in the context of Riemannian geometry is the so-called Cheeger-Gromov convergences of Riemannian
manifolds developed from Gromov-Hausdorff convergences (see, for example, \cite{CGT, Anderson0}, for 
Cheeger-Gromov convergences of Riemannian
manifolds without boundary). In this subsection, for later uses in our paper, we will present the Cheeger-Gromov convergences 
for manifolds with boundary. Good references in the subject are  for examples in \cite{Perales, Kodani, Knox, Wong, AKKLT}.
\\

Let us first recall the definition of harmonic radius for a Riemannian manifold with boundary (cf. \cite{Perales}). 
Assume $(X, \ g)$ is a complete Riemnnian $4$-manifold with the boundary $\p X$. A local coordinates 
$$
(x_0,x_1,x_2,x_3): B(p, r) \to \Omega \subset \mathbb{R}^4
$$
is said to be harmonic if, 
\begin{itemize}
\item $\triangle x_i=0$ for all $0\le i\le 3$ in $B(p, r) \subset X$, when $p\in X$ is in the interior;
\item $\Delta x_i = 0 $ for all $0\le i \le 3$ in $B(p, r)\cap X$ and,  on the boundary $B(p, r)\cap \p X$,  $(x_1, x_2, x_3)$ is a harmonic
coordinate in $\p X$ at $p$ while $x_0 = 0$, when $p\in \p X$ is on the boundary.
\end{itemize}
For $\alpha\in (0,1)$ and $ M \in (1,2)$, we define the harmonic radius $r^{1,\alpha}(M)$ to be the biggest number $r$ satisfying the following properties:
\begin{itemize}
\item If $\text{dist}(p, \p X) > r$, there is a harmonic coordinate chart on $B(p, r)$ such that
\beq
\label{relation1}
M^{-2}\delta_{jk} \le g_{jk}(x) \le M^{2}\delta_{jk}
\eeq
and
\beq
\label{relation2}
r^{1+\alpha}\sup|x-y|^{-\alpha}|\p g_{jk}(x)-\p g_{jk}(y)|\le M-1
\eeq
in $\overline{B(p, \frac r2)}$.
\item If $p\in\p X$, there is a boundary harmonic coordinate chart on $B(p, 4r)$ such that \eqref{relation1} and \eqref{relation2}
hold in $\overline{B(p, 2r)}$. 
\end{itemize}

\vskip 0.2in
The following is the extension of the $C^{1, \alpha}$ convergence theorem of Anderson \cite{CGT, Anderson0} to manifolds with boundary
(cf. \cite{Kodani, AKKLT}).

\begin{lemm}(\cite[Theorem 3.1]{AKKLT})  \label{AKKLT}
Suppose that ${\mathcal M}(R_0,i_0, h_0, d_0)$ is the set of all compact Riemannian manifolds $(X, g)$ with boundary such that 
$$
\aligned
|Ric_X|\le R_0, & \quad |Ric_{\p X}|\le R_0\\
i_{\text{int}}(X) \ge i_0, & \quad  i_\p (X) \ge 2i_0, \quad i(\p X) \ge i_0,\\
\text{Diam}(X) \le d_0, & \quad \| H \|_{Lip(\p X)} \le h_0,
\endaligned 
$$
where $Ric_{\p X}$ is the Ricci curvature of the boundary, $i(\p X)$ is the injectivity radius of the boundary, and $H$ is the mean 
curvature of the boundary.  Then ${\mathcal M}(R_0,i_0, h_0, d_0)$  is pre-compact in the $C^{1,\alpha}$ Cheeger-Gromov topology for 
any $\alpha\in (0,1)$.
\end{lemm}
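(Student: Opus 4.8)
The plan is to follow the strategy of Anderson's $C^{1,\alpha}$ convergence theorem, adapted to incorporate the boundary. The starting point is Gromov's precompactness theorem: since every $(X,g)\in\mathcal{M}(R_0,i_0,h_0,d_0)$ satisfies $\mathrm{Ric}\ge -R_0$ and $\mathrm{Diam}(X)\le d_0$, Bishop--Gromov volume comparison gives a uniform doubling property, so the family is precompact in the Gromov--Hausdorff topology. Thus any sequence $(X_j,g_j)$ admits a GH-convergent subsequence, and the task is to upgrade this to $C^{1,\alpha}$ convergence while realizing the limit as a smooth manifold with boundary.

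The technical heart is a uniform lower bound on the harmonic radius: I would first show that $r^{1,\alpha}(M)\ge r_1$ for some $r_1=r_1(n,R_0,i_0,h_0,\alpha)>0$ depending only on the given data. For interior points this is Anderson's blow-up argument: if the harmonic radius tended to zero along a sequence of pointed manifolds, one rescales so that it equals $1$, and the curvature bound together with $i_{\mathrm{int}}\ge i_0$ forces a smooth limit that is flat $\mathbb{R}^n$ of harmonic radius $1$, contradicting the fact that Euclidean space has infinite harmonic radius. For boundary points the same scheme applies in \emph{boundary} harmonic coordinates on half-balls: the conditions $i_\partial(X)\ge 2i_0$ and $i(\partial X)\ge i_0$ guarantee a collar of uniform size foliated by Fermi coordinates and prevent the boundary from collapsing, while $|\mathrm{Ric}_{\partial X}|\le R_0$ and $\|H\|_{\mathrm{Lip}}\le h_0$ control the intrinsic and extrinsic geometry of $\partial X$. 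Under the rescaling both curvature and mean curvature tend to zero, so the blow-up limit is the flat half-space $\mathbb{R}^n_+$ with its flat hyperplane boundary, again contradicting the normalization.

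With harmonic coordinates available on interior balls and boundary half-balls of uniform radius $r_1$, I would then derive uniform $C^{1,\alpha}$ bounds on the metric coefficients. In harmonic coordinates the Ricci tensor satisfies the quasilinear elliptic system $\mathrm{Ric}_{ij}=-\tfrac12 g^{kl}\partial_k\partial_l g_{ij}+Q_{ij}(g,\partial g)$, with $Q_{ij}$ quadratic in $\partial g$; combining $|\mathrm{Ric}|\le R_0$ with the two-sided bound on $g_{ij}$ built into the definition of the harmonic radius and applying interior (respectively boundary) elliptic $L^p$ estimates yields $W^{2,p}$ and hence $C^{1,\alpha}$ control of $g_{ij}$, uniform over the family. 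At the boundary the mixed condition (tangential coordinates harmonic on $\partial X$, normal coordinate vanishing) together with the Lipschitz mean-curvature bound furnishes the boundary regularity needed for the up-to-the-boundary elliptic estimates.

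Finally I would patch and pass to the limit. Using the volume lower bound coming from $i_0$ and the diameter bound $d_0$, each $X_j$ is covered by a number of harmonic charts bounded independently of $j$, with transition maps controlled in $C^{2,\alpha}$, since a harmonic coordinate of one chart solves an elliptic equation in a neighboring chart. Refining the Gromov--Hausdorff approximations to this atlas and invoking Arzel\`a--Ascoli on the uniformly $C^{1,\alpha}$-bounded coordinate representations produces a subsequence together with diffeomorphisms $\phi_j$ onto a limit manifold with boundary $(X_\infty,g_\infty)$ for which $\phi_j^* g_j\to g_\infty$ in $C^{1,\alpha}$. I expect the main obstacle to be the boundary case of the harmonic-radius estimate: ensuring that $\partial X$ stays non-degenerate in the blow-up limit requires the full strength of the three injectivity-radius hypotheses and the Lipschitz mean-curvature bound, and the elliptic regularity up to a merely $C^{1,\alpha}$ boundary hypersurface must be handled with care.
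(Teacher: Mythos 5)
This lemma is not proved in the paper at all: it is quoted verbatim from Anderson--Katsuda--Kurylev--Lassas--Taylor \cite[Theorem 3.1]{AKKLT}, with the paper's Remark \ref{Rmk:AKKLT} only recording that the key ingredient there is the uniform lower bound on the boundary harmonic radius $r^{1,\alpha}(M)$ (\cite[Theorem 3.2.1]{AKKLT}). Your outline --- blow-up contradiction argument for the harmonic radius at interior and boundary points, elliptic estimates for the Ricci system in (boundary) harmonic coordinates using only $|Ric|$, the intrinsic boundary data and the Lipschitz bound on $H$ as Dirichlet/Neumann-type boundary data, then patching charts and applying Arzel\`a--Ascoli --- is exactly the strategy of the cited source, so your proposal is a correct reconstruction of the standard proof rather than an alternative to anything in this paper.
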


\begin{rema}\label{Rmk:AKKLT} We remark
\begin{itemize}
\item First, in \cite{AKKLT}, it is showed that the harmonic radius $r^{1, \alpha} (M)$ is uniformly bounded from 
below in ${\mathcal M}(R_0,i_0, h_0, d_0)$ (cf. \cite[Theorem 3.2.1]{AKKLT}). 
\item Second,  it is easy to see that, after having 
harmonic coordinate charts with the uniform size, one has the pre-compactness in $C^{k+2, \alpha}$ Cheeger-Gromov topology 
if the Ricci 
curvatures are bounded in $C^{k, \alpha}$ norm and the boundaries are all totally geodesic (even for $k=0$), which is the convergence theorem
that is useful to us later (see  \cite{CG}). 
\item Third, one may have the pre-compactness in the Cheeger-Gromov topology with base points if dropping
the assumption on the diameter $\text{Diam}(X)$.
\end{itemize}
\end{rema}


\section{Injectivity radii: blow-up before blow-up}\label{Sect:injrad}

Our main results in this section concern the injectivity radius estimates for manifolds with boundary. For our purpose we may always assume
that the geometry of the boundary is compact in Cheeger-Gromov sense. The following is an easy consequence from 
\cite[Theorem 3.1]{AKKLT}, which was mentioned as Lemma \ref{AKKLT} in the previous Section \ref{Sect:prelim}.

\begin{lemm} 
\label{bdy-injrad}
Suppose that $(X^4, g^+)$ is a conformally compact Einstein 4-manifold with the conformal infinity of Yamabe constant $Y(\p
X, [\hat g]) \ge Y_0 >0$. And suppose that $(X^4, g^*)$ is the Fefferman-Graham's compactification associated with the Yamabe 
metric $\hat g$ on the boundary such that the intrinsic injectivity radius $i(\p X, \hat g) \ge i_o>0$,  and that $ i_\p (X, g^*)\le  i_{\text{int}} (X, g^*)$. Then there is a constant
$C_\p = C(n) > 0$, depending of $i_0$, such that 
\begin{equation}\label{Eq: bdy-injrad}
\max_X |Rm| (i_\p (X, g^*))^2  + i_\p (X, g^*) \ge C_\p
\end{equation}
where $Rm$ is Riemann curvature of $g^*$.
\end{lemm}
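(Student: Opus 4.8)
The plan is to argue by contradiction through a rescaling (blow-up) of the compactified metric, reducing everything to a flat model in which the relevant injectivity radius is infinite. Suppose the estimate fails. Then there is a sequence of conformally compact Einstein $4$-manifolds $(X_j, g_j^+)$ as in the hypotheses, with Fefferman-Graham compactifications $g_j^*$ and boundary injectivity radii $\lambda_j := i_\p(X_j, g_j^*)$, such that
$$
\max_{X_j}|Rm|[g_j^*]\,\lambda_j^2 + \lambda_j \longrightarrow 0 .
$$
In particular $\lambda_j \to 0$ and $\lambda_j^2\max|Rm|[g_j^*]\to 0$. I would rescale $\tg_j := \lambda_j^{-2}g_j^*$, so that $i_\p(X_j,\tg_j)=1$ while $\max_{X_j}|Rm|[\tg_j]\to 0$. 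Using the hypothesis $i_\p(X_j,g_j^*)\le i_{\text{int}}(X_j,g_j^*)$ gives $i_{\text{int}}(X_j,\tg_j)\ge 1$, and from $i(\p X_j,\hat g_j)\ge i_0$ the rescaled intrinsic boundary injectivity radii satisfy $i(\p X_j,\lambda_j^{-2}\hat g_j)\ge \lambda_j^{-1}i_0\to\infty$. Finally, by Lemma \ref{fgm2} each boundary is totally geodesic, a property preserved by scaling; hence by the Gauss equation the intrinsic boundary curvatures also tend to $0$.

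Next I would pass to the doubled manifolds $\tilde X_j = X_j\cup_{\p X_j}X_j$ across the totally geodesic boundary, which becomes an interior totally geodesic hypersurface $\Sigma_j$. As in the proof of Lemma \ref{lem4.2}, combining the three injectivity radii gives $i(\tilde X_j,\tg_j)\ge 1$, while the curvature of $\tilde X_j$ still tends to $0$. The key geometric step is to locate a cut point of controlled size. Since the boundary is totally geodesic and $\max|Rm|[\tg_j]\to 0$, Rauch comparison bounds both the first focal distance of $\p X_j$ and the conjugate radius of $\tilde X_j$ below by $c\,(\max|Rm|[\tg_j])^{-1/2}\to\infty$; so for large $j$ the normalization $i_\p(X_j,\tg_j)=1$ cannot be caused by a focal point. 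It must therefore be realized by an equidistant double normal: there are points $z_j$ with $\dist(z_j,\p X_j)\to 1$ carrying two distinct minimizing normal geodesics to the boundary, with distinct feet $p_j\ne q_j$. Reflecting each of these across $\Sigma_j$ yields two distinct geodesics of length $\to 2$ joining $z_j$ to its mirror image $z_j'$; since any path from $z_j$ to $z_j'$ must cross $\Sigma_j$ and hence has length $\ge 2\dist(z_j,\Sigma_j)$, both are minimizing. Thus $z_j'\in\mathrm{Cut}(z_j)$ with $\dist(z_j,z_j')\to 2$, and so $i_{\text{int}}(\tilde X_j,z_j)\le \dist(z_j,z_j')\to 2$.

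I would then finish with Klingenberg's lemma and the flat limit. The closest cut point to $z_j$ lies at distance $i_{\text{int}}(\tilde X_j,z_j)\in[1,2+o(1)]$, which is well below the conjugate radius, hence is non-conjugate; therefore there is a geodesic loop at $z_j$ of length $2\,i_{\text{int}}(\tilde X_j,z_j)\in[2-o(1),4+o(1)]$. Passing to a pointed Cheeger-Gromov limit of $(\tilde X_j,\tg_j,z_j)$ — which exists because the curvature tends to $0$ and $i(\tilde X_j)\ge 1$ prevents collapsing — the limit is complete and flat, i.e. $(\R^4,\delta)$, and the loops converge to a geodesic loop of length in $[2,4]$. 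But $\R^4$ carries no nonconstant geodesic loop, a contradiction. This forces the existence of the constant $C_\p$.

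The hard part is the middle step: ruling out a focal cause and extracting the double-normal/cut-point structure at the correct scale, together with guaranteeing that the resulting geodesic loop does not shrink in the limit. The non-collapsing lower bound $i(\tilde X_j)\ge 1$, which is exactly what the hypothesis $i_\p\le i_{\text{int}}$ supplies after rescaling, is essential here: without it the loop could degenerate to a point and no contradiction would ensue. Some care is also needed in the regularity of the doubled metric across $\Sigma_j$ (the reflection is smooth precisely because the boundary is totally geodesic) and in the routine approximation argument used when the infimum defining $i_\p$ is not attained.
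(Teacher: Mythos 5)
Your blow-up setup is the same as the paper's: rescale by $K_j=i_\p(X_j,g_j^*)$ so that $i_\p=1$ while $\max|Rm|\to 0$, note the boundary is totally geodesic, rule out focal points by comparison, and extract a geodesic realizing $i_\p$ that meets the boundary orthogonally at both ends with length tending to $2$ (this is exactly \cite[Lemma 6.3]{Kodani}, which the paper invokes directly rather than via your reflection/Klingenberg detour through the double). The genuine gap is your final step: the assertion that the pointed flat limit of the doubled manifolds is $(\mathbb{R}^4,\delta)$. Flatness plus the non-collapsing bound $i(\tilde X_j,\tilde g_j)\ge 1$ does not force the limit to be simply connected: the doubles may converge to $\mathbb{R}^3\times S^1$, which is precisely the double of the flat slab $\mathbb{R}^3\times[0,2]$, and this space \emph{does} carry geodesic loops of exactly the lengths your Klingenberg argument produces, so no contradiction follows. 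This slab scenario is not a removable technicality. Indeed, the flat slab $\mathbb{R}^3\times[0,\epsilon]$ has totally geodesic boundary, zero curvature, $i_{\text{int}}=\infty$ and $i(\p X)=\infty$, yet $i_\p=\epsilon$ is arbitrarily small; so the inequality \eqref{Eq: bdy-injrad} is simply false for general manifolds with totally geodesic boundary, and any proof that, like yours, uses only the totally geodesic boundary and the three injectivity radii cannot close. The conformally compact Einstein structure must enter somewhere.

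The paper enters it as follows. Working with manifold-with-boundary convergence (Lemma \ref{AKKLT}) rather than doubling, the flat limit with totally geodesic flat boundary splits, by Cheeger--Gromoll, as either $\mathbb{R}^{n-1}\times(0,\infty)$ or the slab $\mathbb{R}^{n-1}\times(0,2)$; the slab is then excluded using the rescaled Fefferman--Graham defining functions: $\bar\rho_i\to\rho_\infty$ with $\nabla^2\rho_\infty=0$, since by \eqref{relation5} the Hessian of $\rho$ is $-\tfrac12\rho\,Ric[g^*]$ and the limit is flat, and with $\rho_\infty\ge C\dist_{g_\infty}(\cdot,\p X_\infty)$; hence $\rho_\infty=\alpha x_n$ with $\alpha>0$, a nontrivial affine function, which cannot vanish on two parallel boundary hyperplanes. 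Once the limit is identified as the half-space, the Kodani geodesic of length $2$ orthogonal to the boundary at both ends is impossible, which is the contradiction. (The paper's acknowledgment records that overlooking exactly this slab case was the mistake in the first version of this lemma, so your omission reproduces a known trap.) Your doubling/Klingenberg mechanism is otherwise sound and could be repaired by running the same $\rho_\infty$ argument to exclude the $\mathbb{R}^3\times S^1$ limit before concluding.
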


\begin{proof} We show by contradiction. Suppose otherwise there are a sequence of Fefferman-Graham's compactified Riemannian manifolds $(X_i, g_i^*)$
such that 
$$
\max_{X_i} |Rm_{g_i^*}| (i_\p (X_i, g_i^*))^2  + i_\p (X_i, g_i^*)  \to 0
$$
and $i(\p X_i) \ge i_0$ for some fixed positive number $i_0$. We then rescale the metrics as follows:
$$
\bar g_i= K^{-2}_i g_i^*
$$
where $K_i = i_\p (p_i) = i_\p (X_i, g_i^*)$ for some $p_i\in \p X_i$. Here we use the fact that the boundary injectivity radius $i_\p(\cdot) $ 
is a continuous function on the boundary since the limit of minimizing geodesics is still minimizing geodesic. Recall the boundary of $g_i^*$ is totally geodesic. On the other hand, because the 
curvature 
$$
\max_{X_i} |Rm_{\bar g_i}| =  \max_{X_i} |Rm_{g_i^*}|  
(i_\partial(X_i, g_i^*))^2  \to 0,
$$
by \cite[Lemma 6.3]{Kodani}, there is a normal geodesic $\gamma$ of length $2$ such that $\gamma$ is orthogonal to 
boundary $\p X_i$ at $\gamma(0)= p_i$ and $\gamma(2)= p_i'$.
\\ 

In light of Lemma \ref{AKKLT}, we may extract a subsequence  (we will always use the same index for subsequences for 
convenience in this paper) $(X_i, \bar g_i, p_i)$ that converges to $(X^n_\infty, g_\infty, p_\infty)$  in $C^{1, \alpha}$ Cheeger-Gromov 
topology. From the assumptions, it is
easily seen that $(X^n_\infty, g_\infty, p_\infty)$ is a complete flat metric manifold with the totally geodesic complete flat boundary 
(it is smooth in harmonic coordinates as demonstrated in \cite{Anderson0, AKKLT}). 
First,  each connected component of the boundary $(\p X_\infty, \hat g_\infty)$ is the Euclidean space $\mathbb{R}^{n-1}$ because of $i(\p X_i) \ge i_0$. Second,  by the Cheeger and Gromoll's splitting theorem, the complete metric $g_\infty$ is a product metric on 
$\mathbb{R}^{n-1} \times (0, \infty)$ or $\mathbb{R}^{n-1} \times (0, 2)$. We claim the later case does not appear, that is, $X_\infty=\mathbb{R}^{n-1} \times (0, \infty)$.  Let $\bar\rho_i$ be the Fefferman-Graham's defining function related to the metric $\bar g_i$. As in the proof of  \cite[Lemma 4.3]{CG}, we know, up to a subsequence, $\bar \rho_i\to \rho_\infty$ and the hessian of $\rho_\infty$ vanishes since $g_\infty$ is flat. Moreover, $\rho_\infty(x)\ge C\text{dist}_{g_\infty}(x, \p X_\infty)$ for some positive constant $C>0$. In view of the boundary condition, $\rho_\infty$ vanishes on $\mathbb{R}^{n-1} \times \{0\}$. Hence, $\rho_\infty(x_1,\cdots,x_n)=\alpha x_n$ with some positive constant $\alpha>0$ since $\nabla^2 \rho_\infty=0$. When $\p X_\infty$ has more than 2 connected components, this contradicts the fact $\rho_\infty$ vanishes on $\p X_\infty$. Therefore, we prove the claim.\\
Now, on the other hand, there is a geodesic of length 2 in $(X_\infty, g_\infty)$ which are orthogonal to the boundary $\p X_\infty$. 
This is a contradiction. Therefore, we have established the Lemma.
\end{proof}

Next we would like to get the lower bound estimates for the interior injectivity radius $i_{\text{int}}$ of a compact Riemannian
manifold with boundary. The real reason for having no interior collapsing follows from the following recent work in \cite{LQS}.

\begin{lemm}\label{Lem:LQS} (Li-Qing-Shi \cite[Theorem 1.3]{LQS}) 
Suppose that $(X^4, g^+)$ is a conformally compact Einstein manifold with the conformal infinity of
Yamabe constant $Y(\p X, [\hat g]) > 0$. Then, for any $p\in X^4$,
\begin{equation}\label{Eq:LQS}
\frac {\text{vol}_{g^+}(B(p, r))}{\text{vol}_{g_{\mathbb{H}^4}}(B(r))} \ge (\frac {Y(\p X, [\hat g])}
{Y(\mathbb{S}^{3}, [g_{\mathbb{S}}])})^\frac 32 
\end{equation}
\end{lemm}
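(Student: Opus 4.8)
The plan is to combine the Bishop--Gromov volume comparison, which is available because the Einstein condition furnishes an \emph{exact} Ricci lower bound, with an analysis of the volume growth at infinity, where the conformal infinity and hence its Yamabe constant must enter. First I would record that for a CCE $4$-manifold one has $\mathrm{Ric}[g^+]=-3g^+$, i.e.\ precisely the Ricci curvature of the hyperbolic space $\mathbb{H}^4$ of sectional curvature $-1$. By the Bishop--Gromov relative volume comparison theorem the quotient
$$
r\mapsto \frac{\mathrm{vol}_{g^+}(B(p,r))}{\mathrm{vol}_{g_{\mathbb{H}^4}}(B(r))}
$$
is monotone non-increasing for every fixed $p\in X^4$. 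Hence its value at any finite $r$ dominates its limit as $r\to\infty$, the asymptotic volume ratio $\mathcal V_\infty(p)$, and the lemma is reduced to the single inequality $\mathcal V_\infty(p)\ge \big(Y(\partial X,[\hat g])/Y(\mathbb{S}^3,[g_{\mathbb{S}}])\big)^{3/2}$, uniformly in $p$.

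Next I would identify $\mathcal V_\infty(p)$ with a conformal boundary volume. Writing $g^+$ in geodesic polar coordinates $g^+=dt^2+h_t$ centred at $p$, the Einstein condition forces the volume density of the geodesic spheres to grow like $e^{3t}$, so that $e^{-2t}h_t$ converges as $t\to\infty$ to a ``visual'' metric $\bar h_p$ on $\mathbb{S}^3$ whose conformal class is exactly the conformal infinity $[\hat g]$. A direct computation then gives
$$
\mathcal V_\infty(p)=\frac{\mathrm{vol}(\mathbb{S}^3,\hat g_p)}{\mathrm{vol}(\mathbb{S}^3,g_{\mathbb{S}})},\qquad \hat g_p:=4\bar h_p\in[\hat g],
$$
the normalisation being fixed by the requirement that the model $(\mathbb{H}^4,g_{\mathbb{H}})$ produce the round unit sphere with $\mathcal V_\infty\equiv 1$. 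Equivalently one may run this through the sublevel sets $\{r>\epsilon\}$ of the geodesic defining function, using that $-\log r$ has unit $g^+$-gradient, so that its level sets are equidistant and the leading coefficient of $\mathrm{vol}_{g^+}(\{r>\epsilon\})\sim \tfrac13\mathrm{vol}(\partial X,\hat g)\,\epsilon^{-3}$ is the honest boundary volume.

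It then remains to bound $\mathrm{vol}(\mathbb{S}^3,\hat g_p)$ from below using only the conformal class, and here I would invoke the defining inequality for the Yamabe constant: for every $\tilde g\in[\hat g]$,
$$
Y(\partial X,[\hat g])\,\mathrm{vol}_{\tilde g}(\partial X)^{1/3}\le \int_{\partial X} R_{\tilde g}\,d\mathrm{vol}_{\tilde g}.
$$
Applied to $\tilde g=\hat g_p$ this yields the required lower bound as soon as one knows $R_{\hat g_p}\le R_{g_{\mathbb{S}}}=6$; indeed then $Y(\partial X)\,\mathrm{vol}^{1/3}\le 6\,\mathrm{vol}$, whence $\mathrm{vol}(\hat g_p)\ge \big(Y(\partial X)/Y(\mathbb{S}^3)\big)^{3/2}\mathrm{vol}(\mathbb{S}^3)$ after using $Y(\mathbb{S}^3)=6\,\mathrm{vol}(\mathbb{S}^3)^{2/3}$, which is exactly the claimed bound on $\mathcal V_\infty(p)$.

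The main obstacle is precisely this scalar-curvature comparison for the visual metric, i.e.\ the claim $R_{\hat g_p}=\tfrac14\lim_{t\to\infty}e^{2t}R_{h_t}\le 6$. This is where the full Einstein structure, and not merely the Ricci lower bound, must be used: by the Gauss equation the sphere's intrinsic scalar curvature equals $-6+H_t^2-|\mathrm{II}_t|^2$, and controlling the $O(e^{-2t})$ correction of the shape operator $\mathrm{II}_t$ away from its hyperbolic value $\coth(t)\,\mathrm{id}$ requires the Riccati/Jacobi analysis along the radial geodesics together with the comparison $\mathrm{Ric}\ge -3g^+$. Pinning down the correct sign of this correction, so that the limiting visual metric is no more positively curved than the round sphere, is the delicate technical step; once it is in place the three ingredients above assemble into the stated estimate.
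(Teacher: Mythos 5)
Your opening reduction is sound: since $\operatorname{Ric}[g^+]=-3g^+$, Bishop--Gromov gives the monotonicity of $r\mapsto \mathrm{vol}_{g^+}(B(p,r))/\mathrm{vol}_{g_{\mathbb{H}^4}}(B(r))$, and the problem becomes a lower bound on the asymptotic volume ratio; your identification of that ratio with the volume of a ``visual'' representative $\hat g_p\in[\hat g]$, normalized so that $\mathbb{H}^4$ yields the unit round sphere, is also the right heuristic. Be aware, though, that the paper itself offers no proof of this lemma: it is quoted verbatim from \cite{LQS}, and the argument there, while it shares your first step, feeds the Yamabe constant in through a different door. The constant is purely algebraic: if $\hat g$ is the Yamabe representative normalized to $R_{\hat g}=6$, then $Y(\partial X,[\hat g])=6\,\mathrm{vol}(\hat g)^{2/3}$ and $Y(\mathbb{S}^3,[g_{\mathbb{S}}])=6\,\mathrm{vol}(g_{\mathbb{S}})^{2/3}$, so $\left(Y(\partial X,[\hat g])/Y(\mathbb{S}^3,[g_{\mathbb{S}}])\right)^{3/2}$ is exactly $\mathrm{vol}(\hat g)/\mathrm{vol}(g_{\mathbb{S}})$; the proof in \cite{LQS} then compares the distance function from $p$ with the geodesic defining function attached to this Yamabe representative, via comparison/maximum-principle estimates exploiting $R_{\hat g}>0$, rather than estimating any curvature of the visual metric at $p$.

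The genuine gap is the step you yourself flag as ``delicate'': the scalar curvature comparison $R_{\hat g_p}\le 6$ for the visual metric, and the tools you invoke cannot deliver it. First, Riccati/Jacobi analysis with only $\operatorname{Ric}\ge -3g^+$ controls the \emph{mean} curvature of geodesic spheres (Laplacian comparison, $H\le 3\coth t$), not the full shape operator $\mathrm{II}_t$; pinning $\mathrm{II}_t$ near $\coth(t)\,\mathrm{id}$ requires a radial sectional curvature bound that the Einstein condition does not provide. One can squeeze a pointwise slice bound $R_{h_t}\le -6+\tfrac23 H^2$ out of the Gauss equation and $|\mathrm{II}_t|^2\ge H^2/3$, but (a) this fails to control regions where $H$ is very negative (possible before focal points), and (b) passing from slice bounds to $R_{\hat g_p}$ needs $C^2$ convergence of $e^{-2t}h_t$, i.e.\ regularity of the distance function up to infinity and absence of cut locus: a priori the visual metric is only Lipschitz and is defined only on the portion of $\partial X$ visible from $p$. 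This cut-locus shadow also undermines your second step in precisely the unfavorable direction: the asymptotic volume ratio computes the visual volume only over the visible set, which may be strictly smaller than $\mathrm{vol}(\hat g_p)$, so even a correct curvature bound on $\hat g_p$ would not immediately yield the claimed inequality. These are exactly the difficulties the argument in \cite{LQS} sidesteps by anchoring all comparisons to the globally defined geodesic defining function of the Yamabe metric instead of polar coordinates at $p$; as it stands, your proposal is a plausible program with its central estimate unproven, not a proof.
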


As a consequence, we have

\begin{lemm} 
\label{int-injrad}
Suppose that $(X^4, g^+)$ is a conformally compact Einstein 4-manifold with the conformal infinity of Yamabe constant $Y(\p
X, [\hat g]) \ge Y_0 >0$.  And suppose  that $(X^4, g^*)$ is the Fefferman-Graham's compactification associated with the Yamabe 
metric $\hat g$ on the boundary such that the intrinsic injectivity radius $i(\p X, \hat g) \ge i_o>0$, and that $ i_\p (X, g^*)\ge  i_{\text{int}} (X, g^*)$. Then there is a constant
$C_{\text{int}} > 0$, depending of $Y_0$ and $i_0$, such that 
\begin{equation}\label{Eq: int-injrad}
\max_X |Rm|(i_{\text{int}} (X, g^*))^2  + i_{\text{int}} (X, g^*) \ge C_{\text{int}}
\end{equation}
where $Rm$ is the Riemann curvature of $g^*$.
\end{lemm}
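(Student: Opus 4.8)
The plan is to argue by contradiction via a blow-up, in direct parallel with the proof of Lemma~\ref{bdy-injrad}, the new ingredient being that the Li--Qing--Shi volume bound (Lemma~\ref{Lem:LQS}) pins down the blow-up limit and thereby forbids interior collapsing. Suppose the estimate fails: there are Fefferman--Graham compactifications $(X_i,g_i^*)$ with $Y(\partial X_i,[\hat g_i])\ge Y_0$, $i(\partial X_i,\hat g_i)\ge i_0$, $i_\partial(X_i,g_i^*)\ge i_{\text{int}}(X_i,g_i^*)=:K_i$, and
$$
\max_{X_i}|Rm_{g_i^*}|\,K_i^2 + K_i \to 0 .
$$
Choosing interior points $p_i$ that nearly realize the infimum $K_i$ and rescaling $\bar g_i:=K_i^{-2}g_i^*$, I get $\max|Rm_{\bar g_i}|=\max|Rm_{g_i^*}|\,K_i^2\to 0$, while by construction $i_{\text{int}}(X_i,\bar g_i)=1=i_{\text{int}}(p_i,\bar g_i)$ and $i_\partial(X_i,\bar g_i)=i_\partial/K_i\ge 1$. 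Since the curvature tends to zero there are no conjugate points at bounded distance, so the normalization $i_{\text{int}}(\bar g_i)\equiv 1$, together with $i_\partial(\bar g_i)\ge 1$ and boundary intrinsic injectivity radius $i_0/K_i\to\infty$, supplies all the injectivity-radius hypotheses of Lemma~\ref{AKKLT} (the boundary being totally geodesic, $H=0$); hence, in its pointed form (Remark~\ref{Rmk:AKKLT}), a subsequence converges to a complete flat limit $(X_\infty,g_\infty,p_\infty)$, with $i_{\text{int}}(p_\infty,g_\infty)=1$ by continuity of the injectivity radius under non-collapsed convergence.

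It remains to identify $X_\infty$, and here the position of $p_i$ relative to $\partial X_i$ matters. Write $\rho_i$ for the FG defining function and set $\lambda_i:=\rho_i(p_i)/K_i$; by Lemma~\ref{lem4.1}, $|\nabla\rho_i|_{g_i^*}\le 1$, so $|\nabla\rho_i|_{\bar g_i}\le K_i$ and $\rho_i$ oscillates by at most $RK_i$ on $B_{\bar g_i}(p_i,R)$, while $\rho_i(p_i)\le\dist_{g_i^*}(p_i,\partial X_i)$. Passing to a subsequence, either $\lambda_i$ stays bounded or $\lambda_i\to\infty$. If $\lambda_i$ is bounded then (using $|\nabla\rho_i|_{g_i^*}\to 1$ at the boundary, as follows from \eqref{relation3} and the smoothness of $g^*$) one has $\dist_{\bar g_i}(p_i,\partial X_i)=O(1)$, so the boundary persists in the limit; its components are copies of $\mathbb{R}^3$ because $i(\partial X_i)\to\infty$, and the Cheeger--Gromoll splitting theorem forces $X_\infty=\mathbb{R}^3\times I$, a flat product with totally geodesic boundary. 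Such a product has infinite interior injectivity radius, contradicting $i_{\text{int}}(p_\infty)=1$.

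If instead $\lambda_i\to\infty$ the boundary recedes, $\dist_{\bar g_i}(p_i,\partial X_i)\ge\lambda_i\to\infty$, and $X_\infty$ is boundaryless; here I would invoke Lemma~\ref{Lem:LQS}. The relative oscillation $RK_i/\rho_i(p_i)=R/\lambda_i\to 0$ shows that on each fixed ball $\bar g_i$ agrees, up to a factor tending to $1$ in $C^2$, with the rescaled Einstein metric $\lambda_i^2g_i^+$; applying Lemma~\ref{Lem:LQS} at the small radius $R/\lambda_i$ and using $\text{vol}_{\mathbb{H}^4}(B(r))\sim\omega_4 r^4$ as $r\to 0$ gives, for every fixed $R>0$,
$$
\text{vol}_{\bar g_i}\!\big(B_{\bar g_i}(p_i,R)\big)\ \ge\ (1+o(1))\,\lambda_i^4\,\text{vol}_{g_i^+}\!\big(B_{g_i^+}(p_i,R/\lambda_i)\big)\ \ge\ c\Big(\tfrac{Y_0}{Y(\mathbb{S}^3,[g_{\mathbb{S}}])}\Big)^{3/2}R^4 .
$$
Thus $X_\infty$ is a complete flat $4$-manifold with Euclidean volume growth at all scales, which forces $X_\infty=\mathbb{R}^4$ (any nontrivial Bieberbach quotient $\mathbb{R}^4/\Gamma$ has strictly slower volume growth). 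But $\mathbb{R}^4$ has infinite interior injectivity radius, again contradicting $i_{\text{int}}(p_\infty)=1$. These contradictions establish the estimate, with $C_{\text{int}}$ depending on $Y_0$ and $i_0$ through Lemma~\ref{Lem:LQS} and the boundary bounds.

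I expect the principal obstacle to be the receding case: transferring the Li--Qing--Shi bound for the complete Einstein metric $g^+$ to the conformally rescaled compactification $\bar g_i$ uniformly at all scales, and in particular using $|\nabla\rho_i|_{g^*}\le 1$ to control the oscillation of the conformal factor $\rho_i$ so that the volume comparison---and the resulting identification $X_\infty=\mathbb{R}^4$---goes through.
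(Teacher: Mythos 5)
Your proof is correct and is essentially the paper's own argument: the same rescaling $\bar g_i=K_i^{-2}g_i^*$ with $K_i=i_{\text{int}}(X_i,g_i^*)$, the same dichotomy (your split on $\lambda_i=\bar\rho_i(p_i)$ is just the contrapositive of the paper's Claim that $\mathrm{dist}_{\bar g_i}(p_i,\partial X_i)\to\infty$ forces $\bar\rho_i(p_i)\to\infty$), the same flat limit via Lemma~\ref{AKKLT}, and the identical transfer of the Li--Qing--Shi bound (Lemma~\ref{Lem:LQS}) through the $O(R)$ oscillation of $\bar\rho_i$ to obtain Euclidean volume growth. The only two steps you leave loose are exactly the ones the paper makes precise: the implication ``$\lambda_i$ bounded $\Rightarrow$ boundary distance bounded'' is proved there by following the gradient flow of $\bar\rho_i$ using $1-|\nabla\bar\rho_i|^2_{\bar g_i}=\frac16 R_{\bar g_i}\bar\rho_i^2$ from \eqref{relation3} (which is uniform in $i$, unlike the smoothness of each individual $g^*$ that you invoke), and your appeal to ``continuity of the injectivity radius'' is implemented via \cite[Lemmas 6.3 and 6.4]{Kodani}, which produce a geodesic loop of length $2$ persisting in the limit; the paper then derives the contradiction by splitting off a circle factor against Euclidean volume growth, a minor variant of your identification $X_\infty=\mathbb{R}^4$.
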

\begin{proof} Again, 
we will prove this lemma by contradiction. Assume otherwise, then there is a sequence of conformally compact Einstein 4-manifolds 
$(X_i^4, g_i^+)$ with the conformal infinity of Yamabe constants $Y(\p X_i, [\hat g_i]) \ge Y_0>0$, whose 
Fefferman-Graham's compactifications $(X_i^4, g_i^*)$ associated with the Yamabe metrics $\hat g_i$ 
on the boundary are compact 4-manifolds with totally geodesic boundary and satisfy  
$$
\max_{X_i} |Rm_{g_i}|(i_{\text{int}} (X_i, g_i^*))^2  + i_{\text{int}} (X_i, g_i^*) \to 0
$$
and 
$$
i(\p X_i, \hat g_i) \ge i_0.
$$

Let us consider the rescaling
$$
\bar g_i = K_i^{-2}g_i^*
$$
for $K_i = i_{\text{int}}(X_i, g_i^*)$. Using \cite[Lemma 6.4]{Kodani} to the almost flat metrics $\bar g_i$,  
one sees that $1 = i(X_i, \bar g_i) = i_{\text{int}} (p_i, \bar g_i)$ for some $p_i \in X_i$  in the interior. 
Now, if $K^{-1}_i \text{dist}_{g_i^*}(p_i, \p X_i) < \infty$, we are in the same situation as in the 
proof of Lemma \ref{bdy-injrad} and derive the contradiction by \cite[Lemma 6.4]{Kodani} (there would be a closed geodesic of 
length 2 in the Euclidean half space).  
\\

Therefore we may assume that $K^{-1}_i \text{dist}_{g_i^*}(p_i,\p X_i) = \text{dist}_{\bar g_i}(p_i, \p X_i) \to\infty.$
Thus the limit space $(X^n_\infty, g_\infty, p_\infty)$ is a complete flat manifold with no boundary, but, with a simple 
closed geodesic of length $2$. We claim that $(X^4_\infty, g_\infty, p_\infty)$ is of Euclidean volume growth in dimensions 4. 
This would be a contradiction, since such flat manifold would be a product of a circle and a flat manifold of dimension 3, 
which would not be able to support the Euclidean volume growth in dimensions 4. 
\\

To finish the proof of the claim that $(X^4_\infty, g_\infty, p_\infty)$ is of Euclidean volume growth in dimensions 4, that is, 
\begin{equation}\label{Claim-vol}
\text{vol}_{g_\infty}(B^{g_\infty}(p_\infty, r)) \ge c_v r^4
\end{equation}
for some fixed $c_v$ and any $r>0$. First let us prove the following claim.
\\

\noindent{\bf Claim:} $\bar\rho_i (p_i)\to \infty$, where $\bar\rho_i = K_i^{-1}\rho_i$ and $K_i = i_{\text{int}}(X_i, g_i^*)$. 

\begin{proof} Assume otherwise that there is a constant $\bar\rho_0>0$ such that $\bar \rho_i (p_i) \le \bar\rho_0$ for all $i$. Due to \eqref{relation3}
at the beginning of the Section \ref{Subsect:rho}, we have
$$
1 - |\nabla \bar\rho_i|^2_{\bar g_i} = \frac 16 R_{\bar g_i} \bar\rho_i^2 \ge 0,
$$
where the covariant derivatives is calculated with respect to the background metric $\bar g_i$. Let us denote
$$
\epsilon_i = \max\{|Rm_{\bar g_i}|, (\text{dist}_{\bar g_i}(p_i, \p X_i))^{-1}\}\to 0.
$$
Then we obtain
$$
\bar \rho_i (x) \le \frac 12 \epsilon^{-\frac 12}_i + \bar\rho_i(p_i)
$$
for all $x\in B^{\bar g_i}(p_i, \frac 12\epsilon^{-1}_i)\subset X_i$ since $|\nabla \bar\rho_i|_{\bar g_i} \le 1$. This in turn implies
$$
1 \ge |\nabla \bar\rho_i|_{\bar g_i} \ge \frac 12 
$$
for all $x\in B^{\bar g_i}(p_i, \frac 12\epsilon^{-1}_i)\subset X_i$, at least for $i$ sufficiently large. Therefore, along the integral curve 
$\gamma(t)$ of the gradient $\nabla_{\bar g_i}\bar\rho_i$ from $p_i$, we may derive
$$
\bar\rho_i (\gamma(t)) \le \bar\rho_0 - \frac t2 < 0
$$
when $t > 2\bar\rho_0$, which is a contradiction since $\gamma(t) \in X$ for any $t\in (2\bar\rho_0, \frac 12\epsilon_i^{-1})$. So the proof of 
this claim is complete.
\end{proof}

Now let 
$$
s_i = \min\{\text{dist}_{\bar g_i}(p_i, \p X_i), \bar\rho_i(p_i)\}\to \infty.
$$
We find, for each $x\in B^{\bar g_i}(p_i, \frac {s_i}2)$,
$$
|\bar\rho_i(x) - \bar\rho_i(p_i)| \le \frac 12\bar\rho_i(p_i),
$$
which implies,
$$
\frac 12 \le \frac {\bar\rho_i(x)}{\bar\rho_i(p_i)} \le \frac 32.
$$
Notice that $\bar g_i = \bar\rho_i^2 g^+_i$. Now, applying Lemma \ref{Lem:LQS} (cf. \cite[Theorem 1.3]{LQS}), we deduce , 
for $r < \frac {s_i}2$, 
$$
\begin{array}{lll}
\ds \text{vol}_{\bar g_i}(B^{\bar g_i}(p_i, r))&\ge&\ds \text{vol}_{\bar g_i}(B^{g_i^+}(p_i, \frac 23 (\bar\rho_i(p_i))^{-1}r))\\
&\ge & \ds \frac 1{(2(\bar\rho_i(p_i))^{-1})^4} \text{vol}_{g_i^+}(B^{g_i^+}(p_i,  \frac 23 (\bar\rho_i(p_i))^{-1}r))\\
&\ge & \ds  C \frac 1{(2(\bar\rho_i(p_i))^{-1})^4} \text{vol}_{g_{\mathbb{H}^4}}(B^{g_{\mathbb{H}^4}}(\frac 23 (\bar\rho_i(p_i))^{-1}r))\\
&\ge & \ds c_v r^4
 \end{array}
$$
for a fixed constant $c_v$ that is independent of $i$. 
Passing to the limit as $i\to\infty$, we get the desired inequality \eqref{Claim-vol} on the limit space $(X^4_\infty, g_\infty, p_\infty)$.
So the proof is complete.
\end{proof}


\section{On compactness of Fefferman-Graham's compactifications}\label{Sect:compactness}

Based on the preparation in the previous sections we are ready to establish the compactness of Fefferman-Graham's compactifications
on conformally compact Einstein 4-manifolds which were stated in the introduction.  The approach follows closely from the corresponding results in \cite{CG}. 
The difference from \cite{CG} is that, in 
light of the injectivity radius estimates in the previous section, Sobolev inequality and trace Sobolev inequality are all available for the
rescaled metrics with bounded curvature, while Sobolev inequality and trace Sobolev inequality are parts of the assumptions in the
main compactness theorem in \cite{CG}. Readers are referred to \cite{CG} for more details. To begin the proof, we will first establish some bounded 
 curvature estimates.

\begin{lemm} \label{Lem:curv-estimate} 
Suppose that $\{(X_i^4, g^+_i)\}$ is a sequence of conformally compact Einstein 4-manifolds satisfying
the assumptions (1), (2), and (3) in Theorem \ref{maintheorem1}. 
Then there is a positive constant $K_0$ such that, for the Fefferman-Graham's compactifications $\{(X_i^4, g_i^*)\}$ 
associated with the Yamabe metric $\hat g_i$ of the conformal infinity $(\p X_i, [\hat g_i])$ 
\begin{equation}\label{Eq:curvature-bound}
\max_{X_i} |Rm_{g_i^*}| \le K_0
\end{equation}
for all $i$.
\end{lemm}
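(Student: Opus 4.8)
The plan is to argue by contradiction through a blow-up at the point of maximal curvature, using the injectivity-radius estimates of Section \ref{Sect:injrad} to guarantee that the blow-up does not collapse. Suppose the conclusion fails, so that along a subsequence $\Lambda_i := \max_{X_i}|Rm_{g_i^*}| \to \infty$, attained at some $p_i \in \bar X_i$. I would rescale $\bar g_i := \Lambda_i g_i^*$, so that $\max_{X_i}|Rm_{\bar g_i}| = 1$. The key observation is that $\bar g_i = (\sqrt{\Lambda_i}\rho_i)^2 g_i^+$ is again a Fefferman-Graham compactification, now associated with the rescaled Yamabe metric $\Lambda_i \hat g_i$, whose conformal class and hence Yamabe constant are unchanged, while its intrinsic injectivity radius $i(\p X_i, \Lambda_i\hat g_i) = \sqrt{\Lambda_i}\, i(\p X_i, \hat g_i) \to \infty$, so that the boundary flattens to $\mathbb R^3$. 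Therefore Lemmas \ref{bdy-injrad} and \ref{int-injrad} apply to $\bar g_i$; since $\max|Rm_{\bar g_i}| = 1$ they yield a uniform lower bound $\min\{i_\p(X_i,\bar g_i), i_{\text{int}}(X_i,\bar g_i)\} \ge c_0 > 0$, i.e. no collapsing at the blow-up scale.

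With uniformly bounded curvature and non-collapsed injectivity radii, Lemma \ref{lem4.2} supplies uniform Sobolev and trace-Sobolev inequalities \eqref{Sobolev}, \eqref{trace-Sobolev} for the metrics $\bar g_i$. These in turn activate the $\varepsilon$-regularity Lemmas \ref{epsilonregularity1} and \ref{epsilonregularity2}; a standard covering-and-bootstrap argument then produces uniform local $C^{k,\alpha}$ bounds on $Rm_{\bar g_i}$ away from the finitely many points where the conformally invariant, hence scale-invariant and (via the Gauss-Bonnet-Chern formula \eqref{CGB0} together with $V\le \frac{4\pi^2}{3}$) uniformly bounded Weyl energy can concentrate. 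Passing to a subsequence, Lemma \ref{AKKLT} (in its pointed form, since the diameter may be infinite) yields smooth convergence $(X_i,\bar g_i, p_i) \to (X_\infty, g_\infty, p_\infty)$ to a complete limit with totally geodesic flat boundary (or no boundary), satisfying $|Rm_{g_\infty}| \le 1$ and $|Rm_{g_\infty}(p_\infty)| = 1$; in particular $g_\infty$ is not flat.

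Next I would identify the limit. Being a smooth limit of Bach-flat and $Q$-flat metrics, $g_\infty$ is Bach-flat and $Q$-flat, and by Lemma \ref{lem4.1} it has $R_{g_\infty}\ge 0$. I claim $g_\infty$ is scalar-flat: in the interior case $\text{dist}_{\bar g_i}(p_i,\p X_i)\to\infty$, the argument in the proof of Lemma \ref{int-injrad} gives $\bar\rho_i\to\infty$ on large balls about $p_i$, so by \eqref{relation3} $R_{\bar g_i} = 6\bar\rho_i^{-2}(1-|\nabla\bar\rho_i|^2)\to 0$; in the boundary case the scaled boundary scalar curvature $R_{\bar g_i}|_{\p X_i} = \Lambda_i^{-1}R_{g_i^*}|_{\p X_i}\to 0$ by \eqref{Con:1}. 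Granting $R_{g_\infty}\equiv 0$, the $Q$-flat equation \eqref{Q-eq} forces $0=\Delta R_{g_\infty}=3|Ric_{g_\infty}|^2$, so $g_\infty$ is Ricci-flat (and Bach-flatness is then automatic). Thus $g_\infty$ is a complete non-flat Ricci-flat $4$-manifold with finite energy $\int|Rm_{g_\infty}|^2=\int|W_{g_\infty}|^2<\infty$ and, by the volume lower bound of Lemma \ref{int-injrad}, Euclidean volume growth, hence ALE by Bando-Kasue-Nakajima.

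Finally I would derive the contradiction. A non-flat Ricci-flat ALE $4$-manifold carries a homologically non-trivial $2$-cycle. In the interior case this cycle would inject into $H_2(X_i,\mathbb Z)$, contradicting \eqref{Con:3}. In the boundary case I would double across the totally geodesic flat boundary; condition \eqref{Con:2} guarantees that the boundary $S$-tensor, equivalently $g_\infty^{(3)}$ via \eqref{g3}, vanishes in the limit, so the doubling is a smooth Ricci-flat ALE metric, and a Mayer-Vietoris computation (using $\p X_i=\mathbb S^3$ and \eqref{Con:3}) again forbids its $2$-cycle. Either way this contradicts the non-flatness of $g_\infty$, so $\Lambda_i$ stays bounded and the lemma follows. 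The step I expect to be the main obstacle is the boundary-bubble case: establishing scalar-flatness up to the flat boundary and showing, through \eqref{Con:2}, that no curvature hides in the non-local term $g^{(3)}$ so that the doubled bubble is genuinely smooth and Ricci-flat; the interior case is the more classical part of the argument.
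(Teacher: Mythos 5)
Your blow-up setup is exactly the paper's: rescale at the maximal curvature point, invoke Lemmas \ref{bdy-injrad} and \ref{int-injrad} for non-collapsing at the blow-up scale, feed these into Lemma \ref{lem4.2} and the $\varepsilon$-regularities (Lemmas \ref{epsilonregularity1}, \ref{epsilonregularity2}), and extract a pointed $C^{k,\alpha}$ limit via Lemma \ref{AKKLT}. (The worry about ``finitely many concentration points'' is spurious: after rescaling $|Rm_{\bar g_i}|\le 1$ everywhere, so with non-collapsing the $L^2$ hypothesis of the $\varepsilon$-regularity holds at a uniform small scale at every point.) The two limit-identification steps, however, each contain a genuine gap. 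In the boundary case your argument for scalar-flatness only controls $R_{\bar g_i}$ \emph{on} $\p X_i$, not in the interior, and in fact the limit there is not Ricci-flat in general: it is (after showing $\bar\rho_i\to\rho_\infty$) the Fefferman--Graham compactification of some CCE metric $g^+_\infty$ with flat conformal infinity $\mathbb{R}^3$, and by \eqref{relation5} such a compactification is Ricci-flat only if $\nabla^2\rho_\infty=0$. So the strategy ``scalar-flat $+$ $Q$-flat \eqref{Q-eq} $\Rightarrow$ Ricci-flat, then double'' does not get off the ground. Condition \eqref{Con:2} does kill the limiting $S$-tensor, equivalently $g^{(3)}_\infty$ via \eqref{g3}, but vanishing of $g^{(3)}$ together with flat $\hat g_\infty$ only makes the FG expansion formally trivial; since that expansion is merely asymptotic, concluding that $g^+_\infty$ is actually hyperbolic near infinity requires the unique continuation theorem of Biquard \cite{Biquard, BiquardHerzlich}, which is precisely the step the paper uses and which your smooth-doubling shortcut cannot replace. (Moreover the boundary bubble has noncompact boundary $\mathbb{R}^3$, so its double is not ALE and a Mayer--Vietoris/2-cycle argument has no purchase there; the paper's contradiction is instead that the limit is the flat half-space, against $|Rm_{g_\infty}|(p_\infty)=1$.)

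In the interior case your pivot --- ``a non-flat Ricci-flat ALE $4$-manifold carries a homologically non-trivial $2$-cycle'' --- is not a known theorem. It holds for $\Gamma=\{1\}$ (Bishop--Gromov rigidity forces flatness) and for hyperk\"ahler ALE spaces via Kronheimer's classification, but for general Ricci-flat ALE $4$-manifolds it is open, and the paper's argument is structured exactly to avoid it: from $H_2(X,\mathbb{Z})=0$ in \eqref{Con:3} and $\p X=\mathbb{S}^3$ one gets that the double of $X$ is a homology sphere, the Crisp--Hillman theorem \cite{CH} then restricts the group at infinity to $\Gamma=\{1\}$, $Q_8$, or a perfect group, and these residual cases are excluded by the Chern--Gauss--Bonnet formula together with the signature formula (the paper derives ALE from \cite{CheegerNaber}; your appeal to Bando--Kasue--Nakajima is an acceptable substitute for that particular step). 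Without this topological detour, your ``inject the $2$-cycle into $H_2(X_i,\mathbb{Z})$'' contradiction rests on an unproved classification statement, so both halves of your limit analysis need to be replaced by the paper's actual mechanisms: unique continuation at the boundary, and the homology-sphere/Crisp--Hillman/Gauss--Bonnet--signature argument in the interior.
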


\begin{proof} Suppose otherwise that there is a subsequence $\{(X_i^4, g_i^+)\}$ satisfying 
$$
K_i  = \max_{X_i} |Rm_{g_i^*}|\to \infty.
$$
Let 
$$
K_i = K_i(p_i) =  |Rm_{g_i^*}| (p_i)
$$
for some $p_i\in \overline{X_i}$. Then we consider the rescaling
$$
(X^4_i, \bar g_i = K_ig_i^*, p_i).
$$
We claim for the metrics $\bar g_i $ the derivatives of curvature $\|\nabla Rm_{\bar g_i}\|$ is uniformly bounded.  \\
For this purpose, we recall  the  curvature tensor satisfies some elliptic PDE with the Dirichlet boundary conditions \cite[section 4]{CG}
\beq
\label{ellipticsystem}
\left\{
\begin{array}{llll}
\triangle R=R^2-3|Ric|^2&\mbox{ in }X\\
R=3\hat{R}&\mbox{ on }\p X\\
\triangle A-\frac16 \nabla^2 R=Rm*A&\mbox{ in }X\\
 A_{\alpha\beta}=\hat{A}_{\alpha\beta}, A_{\alpha n}=0, A_{\alpha n}, A_{n n}=\frac{\hat{R}}{4}&\mbox{ on }\p X\\
\triangle W=Rm*W+g*W*A+L(Ric)&\mbox{ in }X\\ 
W=0 &\mbox{ on }\p X,
\end{array}
\right.
\eeq
where $L$ is a linear differential operator of second order.  In view of \cite[Theorem 8.33]{GT} we have the boundedness of the curvature tensor in $C^{1,\alpha}$ topology since  the curvature tensor on the boundary $\hat{Rm}$ is bounded in $C^{1,\alpha}$ topology.\\

\noindent{\bf No boundary Blow-up:} Let us first consider the cases where 
$$
\text{dist}_{\bar g_i} (p_i, \p X_i) < \infty.
$$
For the pointed manifolds $(X_i, \bar g_i, p_i)$ with boundary, in the light of all the preparations in the previous sections, 
particularly Lemma \ref{AKKLT} and Remark \ref{Rmk:AKKLT}, we have 
Cheeger-Gromov convergence
$$
(X^4_i, \bar g_i, p_i) \to (X^4_\infty, g_\infty, p_\infty)
$$
in 
$C^{k, \alpha}$  Cheeger-Gromov topology (for a subsequence if necessary), where the limit space is a 
complete Bach-flat and $Q$-flat manifold with a totally geodesic boundary $\p X_\infty$; the boundary 
$(\p X_\infty, \hat g_\infty)$ is simply the Euclidean space $\mathbb{R}^{n-1}$ because $i(\p X_i)\ge i_0>0$; and 
$$
|Rm_{g_\infty}|(p_\infty)= 1.
$$ 
To derive the a priori estimates for Cheeger-Gromov convergence, one applies the $\epsilon$-estimates in 
Lemma \ref{epsilonregularity1} and Lemma \ref{epsilonregularity2}, where Sobolev inequality \eqref{Sobolev} and trace Sobolev 
inequality \eqref{trace-Sobolev} are established in Lemma \ref{lem4.2}. The injectivity radii estimates that are needed for $\bar g_i$ 
to satisfy Sobolev and trace Sobolev are given in Lemma \ref{bdy-injrad} and Lemma \ref{int-injrad}.
\\

Now, clearly, to finish the proof is to show that the limit space $(X^4_\infty, g_\infty, p_\infty)$ is the Euclidean half space. 
For the convenience of readers, we very briefly sketch the proof from \cite{CG}. One first needs to show that 
$\bar\rho_i \to \rho_\infty$ where $\rho_\infty$ satisfies
\begin{itemize}
\item $g^+_\infty = \rho_\infty^{-2}g_\infty$ is a (partially) conformally compact Einstein metric on $X^4_\infty$ whose conformal
infinity is the Euclidean space $\mathbb{R}^{n-1}$;
\item $- \Delta_{g^+_\infty} \log\rho_\infty = 3$.
\end{itemize}
Then, by Condition (2) in Theorem \ref{maintheorem1}, one shows that $g^+_\infty$ is locally hyperbolic space metric nearby 
the infinity $\p X^4_\infty = \mathbb{R}^{n-1}$  based on the 
unique continuation therem in \cite{Biquard, BiquardHerzlich}. Finally one concludes that $\rho_\infty = x_0$, since 
$(X^4_\infty, g^+_\infty)$ is hyperbolic space in half space model $(\mathbb{R}^4_+, \frac {|dx|^2}{x_0^2})$ for 
$\mathbb{R}^4_+ = \{x = (x_0, x_1, x_2, x_3)\in \mathbb{R}^4: x_0  >0\}$, which implies that $(X^4_\infty, g_\infty)$ 
is the Euclidean half space (please see the details in the proof of \cite[Proposition 4.8]{CG}). 
\\

\noindent{\bf No interior blow-up:} Next we consider the rest cases when
$$
\text{dist}_{\bar g_i}(p_i, \p X_i) \to \infty
$$
(at least for some subsequence). Notice that, 
$$
K_i = \max_{X_i} |Rm_{g_i^*}| =|Rm_{g_i^*}| (p_i)
$$
for some $p_i\in X$ in the interior. Proceeding as the above boundary cases, one has the Cheeger-Gromov convergence
$$
(X^4_i, \bar g_i, p_i) \to (X^4_\infty, g_\infty, p_\infty)
$$
in 
 $C^{k, \alpha}$  Cheeger-Gromov topology. The proof in these cases follows from \cite{CG}.  We again very briefly sketch the proof that is 
more or less from \cite{CG}. One first derives from \eqref{relation3} that
$$
R_{\bar g_i} = \bar\rho_i^{-2} (1 - |d\bar\rho_i|^2_{\bar g_i})
$$
and shows that
\begin{itemize}
\item $\bar\rho_i (x) \ge C \text{dist}_{\bar g_i} (x, \p X_i)$. (cf. Step 2 in the proof of \cite[Lemma 4.9]{CG}).
\end{itemize}
Then, consequently, 
\begin{itemize}
\item $R_\infty =0$, and 
\item $g_\infty$ is Ricci-flat from being $Q$-flat and scalar flat in the light of the $Q$-curvature equation \eqref{Q-eq}. (cf. Step 3 of
the proof of \cite[Lemma 4.9]{CG}).
\end{itemize}
Thus, $(X_\infty, g_\infty)$ is a complete Ricci-flat 4-manifold with no boundary. 
At this point, as argued in \cite{CG}, first, due to the recent work in \cite{CheegerNaber}, one concludes that 
$(X_\infty, g_\infty)$ is a complete  ALE Ricci flat 4-manifold. By the assumptions, the doubling of $X$ is a homological sphere.  By a topological result due to Crisp-Hillman (\cite{CH} Theorem 2.2), $(X_\infty, g_\infty)$ at the infinity is asymptotic to $\mathbb{S}^3/\Gamma$ with $\Gamma=\{1\}$ or $\Gamma=Q_8$(quaternion
group) or $ \Gamma$ the perfect group (that is, $\mathbb{S}^3/\Gamma$ is a homology 3-sphere). 
 By the Chern-Gauss-Bonnet formula and the signature formula, we obtain the desired contradiction. 
 For more details see \cite{CG} section 4.3.
\end{proof}

We now begin the proof of Theorem \ref{maintheorem1}.
\vskip .1in

With the curvature bound \eqref{Eq:curvature-bound}, the injectivity radius estimates in Lemma \ref{bdy-injrad} and Lemma \ref{int-injrad}, 
the $\epsilon$-regularities Lemma \ref{epsilonregularity1} and Lemma \ref{epsilonregularity2}, one last piece that is needed to apply the 
Cheeger-Gromov convergences for manifolds with boundary in Lemma \ref{AKKLT} and Remark \ref{Rmk:AKKLT} to finish the proof
of Theorem \ref{maintheorem1} is the following diameter bound.

\begin{lemm} 
\label{diameter}
Under the assumptions in Theorem \ref{maintheorem1}, the diameters of the Fefferman-Graham's compactifications
$g_i^*$ are uniformly bounded. 
\end{lemm}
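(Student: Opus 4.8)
The plan is to argue by contradiction via a pointed blow-up, now that the curvature is under control. Suppose the conclusion fails; after passing to a subsequence, $\diam(X_i,g_i^*)\to\infty$. By Condition \eqref{Con:1} the Yamabe representatives $\hat g_i$ lie in a fixed compact family, so $\diam(\p X_i,\hat g_i)$ is uniformly bounded; since the boundary is totally geodesic for $g_i^*$, any two interior points can be joined through nearest boundary points, and hence the interior depth $D_i:=\sup_{x\in X_i}\dist_{g_i^*}(x,\p X_i)$ must tend to $\infty$. I would therefore choose interior base points $p_i$ realizing (a definite fraction of) $D_i$, so that $\dist_{g_i^*}(p_i,\p X_i)\to\infty$.

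Next I would extract a limit. Lemma \ref{Lem:curv-estimate} gives $|Rm_{g_i^*}|\le K_0$, and the injectivity-radius estimates of Lemmas \ref{bdy-injrad} and \ref{int-injrad} rule out collapsing once the curvature is bounded; thus Lemma \ref{AKKLT} together with Remark \ref{Rmk:AKKLT} yields a pointed $C^{k,\alpha}$ Cheeger--Gromov limit $(X_i,g_i^*,p_i)\to(X_\infty,g_\infty,p_\infty)$, where $g_\infty$ is complete, non-collapsed, satisfies $|Rm_{g_\infty}|\le K_0$, and is Bach-flat and $Q$-flat. Because $\dist_{g_i^*}(p_i,\p X_i)\to\infty$, the boundary escapes to infinity and $X_\infty$ has no boundary. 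The whole content of the lemma is thus to show that such a complete, boundaryless limit cannot occur.

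To identify $g_\infty$ I would track the Fefferman--Graham defining function. By \eqref{relation3} one has $R[g_i^*]=6\rho_i^{-2}(1-|\nabla\rho_i|^2)$ with $|\nabla\rho_i|\le 1$ from Lemma \ref{lem4.1}; in particular, on $\{\rho_i\le\rho_0\}$ with $\rho_0:=\sqrt{3/(CK_0)}$ one gets $|\nabla\rho_i|^2\ge\tfrac12$, so integrating along the downward gradient flow of $\rho_i$ (which has no critical points there and hence reaches $\p X_i$) shows $\dist_{g_i^*}(x,\p X_i)\le\sqrt2\,\rho_i(x)$. Together with the lower bound $\rho_i\gtrsim\dist_{g_i^*}(\cdot,\p X_i)$ from \cite[Lemma 4.9]{CG}, this forces $\rho_i\to\infty$ on the escaping region, whence $R[g_i^*]\le 6\rho_i^{-2}\to 0$ locally uniformly and $R_{g_\infty}\equiv 0$. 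The $Q$-flat equation \eqref{Q-eq}, $\Delta R=3|Ric|^2-R^2$, then upgrades scalar-flatness to $Ric_{g_\infty}\equiv 0$. So $(X_\infty,g_\infty)$ is a complete, non-collapsed, Ricci-flat $4$-manifold; Lemma \ref{Lem:LQS} supplies both the non-collapsing and Euclidean volume growth, so by \cite{CheegerNaber} it is ALE. Invoking Condition \eqref{Con:3} (the doubling is a homology sphere) with the Crisp--Hillman classification \cite{CH} and the Chern--Gauss--Bonnet and signature formulas, exactly as in the interior case of Lemma \ref{Lem:curv-estimate}, one concludes that $(X_\infty,g_\infty)$ is flat $\mathbb{R}^4$.

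The main obstacle is this last step. Unlike in Lemma \ref{Lem:curv-estimate}, here there is no rescaling normalizing $|Rm_{g_\infty}|(p_\infty)=1$, so flatness of $g_\infty$ is not in itself absurd and must be contradicted by quantitative use of the Einstein structure of the $g_i^+$. The new argument I would pursue compares volumes: on the escaping region $\rho_i\to\infty$ makes $g_i^+=\rho_i^{-2}g_i^*$ collapse the nearly flat $g_i^*$-geometry, and I would play the resulting polynomial volume growth of $g_i^+$-balls against the \emph{exponential} lower bound $\text{vol}_{g_i^+}(B(p_i,r))\ge c_0\,\text{vol}_{\mathbb{H}^4}(B(r))$ furnished by Lemma \ref{Lem:LQS}, together with the uniform bound on the renormalized volume $\int_{X_i}\sigma_2(A_{g_i^*})$. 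Making this incompatibility precise---ensuring that the almost-Euclidean interior region persists on a large enough $g_i^+$-scale to violate the hyperbolic volume lower bound---is exactly where the new diameter argument lies, and is the step I expect to require the most care.
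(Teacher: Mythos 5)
You are right that interior blow-up at points of depth $\to\infty$ produces a complete boundaryless limit which (via $R[g_i^*]=6\rho_i^{-2}(1-|\nabla\rho_i|^2)\to 0$, the $Q$-flat equation, Cheeger--Naber and the topological hypotheses) must be flat $\mathbb{R}^4$ --- but, as you yourself concede, that conclusion carries no contradiction, and the step you defer (``playing polynomial against exponential volume growth'') is precisely the step that cannot work as proposed. Lemma \ref{Lem:LQS} furnishes only a \emph{lower} volume bound, and after the conformal rescaling $g_i^*=\rho_i^2 g_i^+$ it translates exactly into the Euclidean lower bound $\mathrm{vol}(B^{g_i^*}(p_i,r))\ge c_v r^4$ --- an inequality that the flat limit satisfies with room to spare. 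The renormalized-volume/$L^2$-curvature bound is likewise consistent with a flat region (no curvature concentration), so every ingredient in your final paragraph is compatible with the scenario you need to exclude. The obstruction to a long manifold is global, not local at $p_i$: by basing the limit at deep interior points you discard the boundary, and with it the only mechanism that ties $\rho_i$ down.

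The paper's proof keeps the boundary in the limit and supplies the missing \emph{upper}-bound ingredient from the PDE for $\log\rho_i$: since $-\Delta\log\rho_i=\frac{R}{2}+|\nabla\rho_i|^2\rho_i^{-2}=3(1-|\nabla\rho_i|^2)\rho_i^{-2}+|\nabla\rho_i|^2\rho_i^{-2}\ge 0$, integrating over $\{d_{g_i^*}(\cdot,\partial X)\ge 1\}$ and controlling the flux through $\{d=1\}$ (where $\rho_i$ is bounded below and $|\nabla\rho_i|\le 1$) yields the uniform integral estimate $\int_{\{d\ge1\}}\rho_i^{-2}\,d\mathrm{vol}\le C_2$ of \eqref{eqlem4.2}. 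Passing to the limit, this first forces $\rho_\infty(x)\to\infty$ at infinity (using non-collapsing of unit balls and $|\nabla\rho_\infty|\le 1$), and then, combined with the Euclidean volume growth $\mathrm{vol}(B^{g_\infty}(p,r))\ge c_v r^4$ for $r<\frac12\rho_\infty(p)$ from Lemma \ref{Lem:LQS}, gives $\sup_{B(p,r)}\rho_\infty\ge Cr^2$ for $r=(\rho_\infty(p))^{2/3}$; since the Lipschitz bound of Lemma \ref{lem4.1} only permits $\sup_{B(p,r)}\rho_\infty\le\rho_\infty(p)+r$, one gets $C(\rho_\infty(p))^{4/3}\le\rho_\infty(p)+(\rho_\infty(p))^{2/3}$, so $\rho_\infty$ is bounded --- contradicting its divergence at infinity. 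In short: the quadratic-versus-linear growth tension for $\rho_\infty$, powered by the integral estimate on $\rho_i^{-2}$, is the idea your proposal is missing, and no amount of local analysis at the deep points $p_i$ can substitute for it.
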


\begin{proof} The proof of this lemma under the  assumption that the first Yamabe invariant is uniformly  bounded from below is obtained in \cite[Section 5: {\it The proof of Theorem 1.1}]{CG}. However,  we do not know if one has  the suitable Euclidean Sobolev type inequality in actual setting. This makes the problem is more delicate. Here we give a different approach to overcome the difficulty.
\\

We have already proved the family of metrics $ g_i^*$ has the bounded curvature in $C^{1}$ so that the arguments given in  \cite[Section 4.4: {\it The proof of Theorem 4.4}]{CG} yields the bound in $C^{k+1}$ norm.  In view of Lemmas \ref{bdy-injrad} and \ref{int-injrad}, the boundary radius and the interior one are uniformly bounded from below.  Therefore, for all $i$, for all $x\in \bar X$, we have $vol(B^{g_i^*}(x,1))\ge C>0$ for some constant $C>0$ independent of $i,x$, that is, there is non-collapse.  We prove the diameter is uniformly bounded from above by contradiction. Suppose that the diameter $\diam(g_i^*)$ tends to the infinity. By Cheeger-Gromov-Hausdorff compactness theory, up to diffeomorphisms fixing the boundary, $(X_i,g_i^*)$ converges to some complete non-compact manifold $(X_\infty, g_\infty)$ with the boundary. We divide the proof in 5 steps.\\

{\it Step 1.} There exists some $C>0$ such that $\rho_i\ge C$ provided $d_{ g_i^*}(x,\partial X)\ge 1$ and $d_{ g_i^*}(x,\partial X)\le C\rho_i(x)$ provided $0\le d_{ g_i^*}(x,\partial X)\le 1$.Thus the limit metric is conformal to an asymptotic hyperbolic Einstein manifold.  Moreover, there exists some constant $C_1>0$ independent of $i$ such that $\ds\int |Rm_{g_i^*}|^2\le C_1$. 
\\

The first part of the claim can be proved in the same way as in \cite[Section 4: {\it the proof of Lemma 4.4}]{CG}. The second part is proved  in \cite[Section 5: {\it the step 2 of the proof of Theorem 1.1}]{CG}.
Without loss of generality,  assume the boundary injectivity radius is bigger than 1.
 \\

{\it Step 2.} There exists some constant $C_2>0$ independent of $i$ such that 
\beq
\label{eqlem4.2}
\ds\int_{\{x, d_{ g_i^*}(x,\p X)\ge 1\}} \rho^{-2}_i (x)\le C_2.
\eeq
Thanks of (\ref{relation3}) and (\ref{relation4}),  we infer
$$
-\triangle \log \rho_i=\frac {R_i}2+\frac{|\nabla \rho_i|^2}{\rho_i^2}=3\frac{(1-|\nabla \rho_i|^2)}{\rho_i^2}+\frac{|\nabla \rho_i|^2}{\rho_i^2}
$$
Integrating on the set ${\{x, d_{ g_i^*}(x,\p X)\ge 1\}}$, we obtain
$$
\int_{\{x, d_{ g_i^*}(x,\p X)\ge 1\}} 3\frac{(1-|\nabla \rho_i|^2)}{\rho_i^2}+\frac{|\nabla \rho_i|^2}{\rho_i^2} =\left| \oint_{\{x, d_{ g_i^*}(x,\p X)= 1\}}\frac 1{\rho_i}  \langle \nabla \rho_i ,\nu\rangle\right |
$$
where $\nu$ is the outside normal  vector on the boundary ${\{x, d_{ g_i^*}(x,\p X)= 1\}}$. By Step 1, we know ${\rho_i} $ is uniformly bounded from below on the set ${\{x, d_{ g_i^*}(x,\p X)= 1\}}$. Together the facts the curvature of $g_i^*$ is bounded and the boundary $(\p X_i,\hat{g_i})$ is compact, we infer for some positive constant $C>0$
$$
\int_{\{x, d_{ g_i^*}(x,\p X)\ge 1\}} 3\frac{(1-|\nabla \rho_i|^2)}{\rho_i^2} \le C,\mbox{ and }  \int_{\{x, d_{ g_i^*}(x,\p X)\ge 1\}} \frac{|\nabla \rho_i|^2}{\rho_i^2} \le C
$$
since $|\nabla \rho_i|\le 1$. Combining these estimates, the desired claim yields.\\

{\it Step 3.}  We have 
$$
\lim_{x\to\infty} \rho_\infty(x) =+\infty
$$
Letting $i\to\infty$ in (\ref{eqlem4.2}), we get
\beq
\label{eqbislem4.2}
\int_{\{x, d_{ g_\infty}(x,\p X)\ge 1\}} \rho^{-2}_\infty(x)\le \lim_i\int_{\{x, d_{ g_i^*}(x,\p X)\ge 1\}} \rho^{-2}_i (x)\le C_2.
\eeq
For all $\varepsilon >0$, there exists $A>0$ such that
$$
\int_{\{x, d_{ g_\infty}(x,\p X)\ge A\}} \rho^{-2}_\infty(x)\le \varepsilon 
$$
Therefore, for any $y$ with $d_{ g_\infty}(y,\p X)\ge A+1$, we can estimate
$$
\int_{B^{g_\infty}(y,1)} \rho^{-2}_\infty(x)\le \int_{\{x, d_{ g_\infty}(x,\p X)\ge A\}} \rho^{-2}_\infty(x)\le  \varepsilon 
$$
so that 
$$
(\sup_{B^{g_\infty}(y,1)} \rho_\infty)^{-2} Vol (B^{g_\infty}(y,1))\le  \varepsilon 
$$
that is,
$$
\sup_{B^{g_\infty}(y,1)} \rho_\infty\ge C\varepsilon^{-1/2}
$$
Together with Lemma \ref{lem4.1}, we deduce
$$
\inf_{B^{g_\infty}(y,1)} \rho_\infty\ge \sup_{B^{g_\infty}(y,1)} \rho_\infty-1\ge C\varepsilon^{-1/2}-1
$$
Finally, we prove Step 3.\\

{\it Step 4.}  We claim that there exists some $c_v>0$ such that for any $p\in X_\infty$ and for any $r<\frac12
\rho_\infty(p)$
\beq
\label{eqlembisbis4.2}
Vol(\text{dist}(B^{g_\infty}(p,r)))\ge c_v r^4
\eeq

Let $p_i\in X_i $ such that $p_i\to p$.  First we remark that $\text{dist}_{g_i^*}(p_i, \p X_i)\ge  \rho_i(p_i)$ because of  Lemma \ref{lem4.1}. As in the proof of the end of Section 3, we have 
$$
Vol(\text{dist}(B^{g_i^*}(p_i,r)))\ge c_v r^4,
$$
where $c_v$ is some positive constant independent of $i$. Letting $i\to\infty$, the claim is proved.\\

{\it Step 5.} A contradiction.\\

On choose $p\in X_\infty$ such that $\rho_\infty (p)$ is sufficiently large. We fix $r=(\rho_\infty (p))^{2/3}$.  Using the results in Steps 2 and 4, we get
$\rho_\infty(p)$
\beq
\label{eqlembisbis24.2}
(\sup_{B^{g_\infty}(p,r)} \rho_\infty)^{-2} Vol (B^{g_\infty}(p,r))\le  \int_{B^{g_\infty}(p,r)} \rho^{-2}_\infty(x)\le C_2
\eeq
so that for some  positive contsant  $C>0$ there holds
\beq
\label{eqlembisbis34.2}
\sup_{B^{g_\infty}(p,r)} \rho_\infty\ge  Cr^2= C(\rho_\infty (p))^{4/3}
\eeq
On the other hand, it follows from Lemma \ref{lem4.1}, we deduce
\beq
\label{eqlembisbis34.2bis}
\inf_{B^{g_\infty}(p,r)} \rho_\infty\ge \sup_{B^{g_\infty}(p,r)} \rho_\infty-r
\eeq
so that
$$
\rho_\infty (p) +(\rho_\infty (p))^{2/3}=\rho_\infty (p)+r\ge C(\rho_\infty (p))^{4/3}.
$$
This yields that $\rho_\infty (p)$ is bounded. This contradicts the claim in Step 3. \\
Thus we have finished the proof of Lemma \ref{diameter}, hence the proof of  Theorem \ref{maintheorem1}.
\end{proof}

The proof of Theorem  \ref{maintheorem2} is quite similar to the corresponding theorem in \cite{CG} albeit the removing 
of conditions (4) and (5).   We leave the details to the readers.  The proof of Theorem  \ref{maintheorem3}
is different as we have no informations on the $S$-tensor or $T$ curvature in the statement  of the theorem. 
Here we will present the proof. 
\\

\begin{proof}[Proof of Theorem \ref{maintheorem3}] 
Again we will prove by contradiction.   Let $\{g_i^+\}$ be a set of conformally compact Einstein metrics on $X^4$ and $\{g_i^*\}$ corresponding 
Fefferman-Graham's compactifications associated with the Yamabe metrics $\hat g_i\in \mathcal{C}$, where $\mathcal{C}$ is compact
in $C^{k,\alpha}$ Cheeger-Gromov topology as given in \eqref{Con:1} in Theorem \ref{maintheorem1}. Assume that 
$$
\text{($2^\star$)} \quad \text{either } Y(\partial X, [\hat g_i]) \to Y(\mathbb{S}^3, [g_{\mathbb{S}}]) \text{ or } \int_X (|W|^2dvol)[g^+_i]\to 0,
$$ 
but $(X, g_i^*)$ does not converges in $C^{k,\alpha}$ Cheeger-Gromov topology. If the interior blow-up were to happen, 
then, it is easily seen that it would be a contradiction with the fact that any possibly limit space is flat due to ($2^\star$), in light of 
the rigidity in Gromov-Bishop's volume comparison principle or simply the limit metric is both Ricci flat and locally conformally flat. If the 
boundary blow-up were to happen, then it is again easily seen that it would be a contradiction with the fact that any possibly limit space
would be with $g^+_\infty$ being hyperbolic. Therefore, by the proof of Theorem \ref{maintheorem1}, one concludes
that $(X, g_i^*)$ converges to the Fefferman-Graham's compactification of hyperbolic space in $C^{k,\alpha}$ Cheeger-Gromov
topology for some $\alpha\in (0, 1)$, from which we reached a contradiction.  
\end{proof}

Before ending this section, we turn to an important fact which is a consequence of the compactness result in Theorem \ref{maintheorem1},  and in fact is an 
improved statement of the compactness for conformally compact Einstein metrics with the same conformal infinity. We will later use this fact (Theorem \ref{weighted-conv}) 
to establish the uniqueness result in section 5. 
To state the result, we will use the notion of weighted
spaces of tensors, which we refer the readers to \cite{Lee1} (see also \cite{GL}). We first recall the following expansions in terms of the Fermi coordinate from
the boundary, which we have stated in
the introduction of this paper in terms of geodesic coordinates. The expansion in this form is motivated 
by an observation in \cite{Mald}.

\begin{lemm} \label{weighted-convbis} Let $(X^4, g)$ be a Bach-flat and Q-flat 4-manifold with the totally geodesic boundary. Then, in the Fermi coordinate from
the boundary, one has $g = dr^2 + g_r$ and the expansion
\begin{equation}\label{Mald-Expansion}
g_r = \hat g + g^{(2)}r^2 + g^{(3)}r^3 + \cdots,
\end{equation}
where $g^{(2)}$ is a curvature of $\hat g = g|_{T\p X}$ and $g^{(3)}$ is not local.
\end{lemm}

\begin{theo}\label{weighted-conv} 
Suppose that, for two sequences of conformally compact Einstein metrics $g_i^+$ and $h_i^+$ that have the same conformal infinity of
positive Yamabe type and satisfy the assumption  in Theorem \ref{maintheorem1} (or the assumptions in Theorem \ref{maintheorem3}). Then, for a weight $\delta \in (0, 3)$,
there are subsequence (possibly after diffeomorphisms $\psi_i$ and $\phi_i$ that fix the boundary) that  $\psi_i^*g_i^*-\phi_i^*h_i^*$ converges  in 
weighted $C^{2, \alpha}_\delta$ topology, where  $g_i^*$ (resp. $h_i^*$) denotes Fefferman-Graham's compactification of $g_i^+$ (resp. $h_i^+$). 
\end{theo}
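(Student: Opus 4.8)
The plan is to upgrade the $C^{k,\alpha}$ Cheeger--Gromov compactness already furnished by Theorem \ref{maintheorem1} (equivalently Theorem \ref{maintheorem3}) into a convergence statement for the \emph{difference} of the two families in the stronger weighted topology, exploiting the fact that the two CCE metrics share a conformal infinity. First I would apply Theorem \ref{maintheorem1} to each of the families $\{g_i^*\}$ and $\{h_i^*\}$ separately: both are precompact in the $C^{k,\alpha}$ Cheeger--Gromov topology, so after passing to a subsequence and composing with boundary-fixing diffeomorphisms $\psi_i$ and $\phi_i$ one has $\psi_i^* g_i^* \to g_\infty^*$ and $\phi_i^* h_i^* \to h_\infty^*$ in $C^{k,\alpha}$. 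Since the common conformal infinity $\hat g_i$ lies in the compact set $\mathcal{C}$, we may also assume $\hat g_i \to \hat g_\infty$ in $C^{k,\alpha}$. This already yields convergence of the difference away from the boundary; the whole content of the theorem is therefore the behavior in a collar of $\p X$.

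The key step is a gauge normalization followed by a cancellation of leading terms. Working in the Fermi coordinates from the boundary for each metric, and after a further boundary-fixing diffeomorphism identifying the two collars $\p X \times [0,\epsilon)$ so that the two defining functions coincide with a common $r$, Lemma \ref{weighted-convbis} gives the expansions $(g_i)_r = \hat g_i + g_i^{(2)} r^2 + g_i^{(3)} r^3 + \cdots$ and $(h_i)_r = \hat g_i + h_i^{(2)} r^2 + h_i^{(3)} r^3 + \cdots$. Because both metrics have the same boundary metric $\hat g_i$ and because $g^{(2)}$ is a curvature quantity of $\hat g_i$ alone (it equals $-\tfrac12 A_{\hat g_i}$), the order-$0$ and order-$2$ coefficients coincide, so $g_i^{(2)} = h_i^{(2)}$ and the first two terms cancel in the difference. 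Consequently, in the aligned Fermi gauge,
\beq
\psi_i^* g_i^* - \phi_i^* h_i^* = (g_i^{(3)} - h_i^{(3)}) r^3 + O(r^4)
\eeq
vanishes to order $r^3$ at $\p X$, the leading coefficient being the difference of the non-local terms.

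With the cancellation in hand, the weighted estimate is essentially automatic for $\delta < 3$. The $C^{k,\alpha}$ Cheeger--Gromov bounds (with $k$ taken large enough that the coefficients $g_i^{(3)}, h_i^{(3)}$ and the remainders are controlled together with the requisite number of derivatives) show that $r^{-\delta}(\psi_i^* g_i^* - \phi_i^* h_i^*)$ is uniformly bounded in $C^{2,\alpha}$ up to the boundary, i.e. the difference is uniformly bounded in $C^{2,\alpha}_{\delta'}$ for some $\delta' \in (\delta, 3)$. A weighted Arzel\`a--Ascoli argument --- precompactness in $C^{2,\alpha}_\delta$ following from uniform bounds at the slightly stronger weight $\delta'$ --- then extracts a further subsequence along which $\psi_i^* g_i^* - \phi_i^* h_i^*$ converges in $C^{2,\alpha}_\delta$, as claimed.

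The main obstacle I anticipate is the uniformity of the gauge construction and of the expansion in $i$. One must choose the boundary-fixing diffeomorphisms $\psi_i,\phi_i$ compatibly so that, for each $i$, the two collars are identified via a common defining function and the totally geodesic boundary condition is preserved; only then is the cancellation $g_i^{(2)}=h_i^{(2)}$ genuine rather than merely asymptotic. Equally delicate is verifying that the coefficients in the Fermi expansion of Lemma \ref{weighted-convbis}, including the non-local $g^{(3)}$ and the higher-order remainder, are bounded \emph{uniformly in $i$} with the H\"older control needed to run the weighted compactness; this is where the full strength of the $C^{k,\alpha}$ bounds from Theorem \ref{maintheorem1}, and the Bach-flat and $Q$-flat elliptic system behind Lemma \ref{weighted-convbis}, must be invoked.
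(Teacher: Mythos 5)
Your proposal is correct and follows essentially the same route as the paper: align the Fermi gauges by boundary-fixing diffeomorphisms so the two compactifications share a common distance function $r$, use Lemma \ref{weighted-convbis} together with the common conformal infinity (and the locality of $g^{(2)}$) to cancel through order $r^2$ so that $\psi_i^*g_i^*-\phi_i^*h_i^*=O(r^3)$ uniformly in $i$, and then trade the excess decay $r^{3-\delta}$ near the boundary against interior convergence to conclude in $C^{2,\alpha}_\delta$. The only cosmetic difference is in the final step, where the paper runs a direct Cauchy estimate split over $\{r\le r_\epsilon\}$ and $\{r\ge r_\epsilon\}$ while you package it as a compact-embedding/Arzel\`a--Ascoli argument; just note that the subsequential $C^{2,\alpha}$ convergence on the interior region needed there comes from the $C^{k,\alpha}$, $k\ge 3$, Cheeger--Gromov convergence you already extracted in your first step, not from the uniform $C^{2,\alpha}_{\delta'}$ bound alone.
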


\begin{proof} For each Fefferman-Graham's compactification $g_i^*$ (resp. $h_i^*$), we first set the Fermi coordinate from the boundary. By the lower
bound of the boundary injectivity radius, we know that the heights of these Fermi coordinates are bounded from the below. The necessary 
diffeomorphisms that fix the boundary one needs to use is to make sure that each of these Fefferman-Graham compactification $g_i^*$ 
share the same distance function $r$ to the boundary $\p X$ at least within the focal loci of $g_i^*$(resp. $h_i^*$).  
\\

Suppose that $g_i^*$ (resp. $h_i^*$) (a subsequence) converges in $C^{3, \alpha}$ Cheeger-Gromov topology.  Now, let us align the distance functions to
be the same for all $g_i^*$ (resp. $h_i^*$) in this subsequence by diffeomorphisms $\psi_i$ (resp. $\phi_i$) that fix the boundary, and get
$$
\psi_i^* g_i^* - \phi_i^* h_i^*= O(r^3). 
$$
for any $i$ from Lemma \ref{weighted-convbis}. If necessary, extract a subsequence,  for $\delta \in (0, 3)$ and any $\epsilon > 0$, 
there is an index $N$, for $i, j\ge N$,
$$
\|(\psi_i^*g_i^*- \phi_i^*h_i^*)- (\psi_j^*g_j^*- \phi_j^*h_j^*)\|_{C^{2, \alpha}(\bar X_\infty)} < \frac 12 \epsilon.
$$
For any fixed $\delta < 3$, one gets
$$
\|\psi_i^*g_i^* - \phi_i^*h_i^*\|_{C^{2, \alpha}} \leq Cr^3 \leq \epsilon r^\delta
$$
over the region $\{r\leq r_\epsilon\}$ for some small $r_{\epsilon}>0$ such that $Cr_\epsilon^{3- \delta} \leq \epsilon$ 
($C$ is independent of $i$ due to the compactness in Theorem \ref{maintheorem1} ( resp. Theorem \ref{maintheorem3}) and Lemma \ref{weighted-convbis}, and the sizes of 
Fermi coordinates for $g_i^*$ has a uniform lower bound again follows from Theorem \ref{maintheorem1} (resp. Theorem \ref{maintheorem3})); while one gets
$$
\|(\psi_i^*g_i^*- \phi_i^*h_i^*)- (\psi_j^*g_j^*- \phi_j^*h_j^*)\|_{C^{2, \alpha}(\bar X_\infty)}  \leq \epsilon r^\delta
$$
over the rest $\{r \ge r_\epsilon\}$ by setting $N$ larger, in the light of Theorem \ref{maintheorem1} (resp. Theorem \ref{maintheorem3}). It is then easily seen that the 
corresponding $\psi_i^*g_i^+ - \phi_i^*h_i^+$ converges in the weighted space 
$C^{2, \alpha}_\delta (X_\infty)$ with $\delta\in(0, 3)$. This completes the proof of Theorem \ref{weighted-conv}. 
\end{proof}


\section{Uniqueness of Graham-Lee solutions in dimension 4} \label{Sect:uniqueness}

In this section we derive the global uniqueness result Theorem \ref{uniqueness} based on some result in the recent work \cite{LQS}. 

\begin{proof}[Proof of Theorem \ref{uniqueness}] 
We will prove this by contradiction. Assume otherwise there is a sequence of conformal 3-sphere $(\mathbb{S}^3, [\hat g_i])$ 
that converges to the round sphere such that, for each $i$, there exist two non-isometric conformally compact Einstein metrics $g^+_i$
and $h^+_i$. 
\\

We first claim that, for a subsequence, both $g^+_i$ and $h^+_i$ converge to the hyperbolic space in $C^{3, \alpha}$ 
Cheeger-Gromov sense due to Theorem \ref{maintheorem2} and the uniqueness result when the conformal infinity is the standard sphere \cite{Q,LQS}.\\

Next, according to the proof of Theorem \ref{weighted-conv}, we actually can conclude that, after some diffeomorphisim that fix the 
boundary $\phi_i$ and $\psi_i$,
$$
\|\psi_i^* g^+_i - \phi_i^*h^+_i\|_{C^{2, \alpha}_\delta (\mathbb{H}^4)} \to 0
$$
for any $\delta \in (0, 3)$ and some subsequence. In other words, in this subsequence, the two distinct conformally
compact Einstein metrics $g^+_i$ and $h^+_i$ are arbitrarily close to each other in weighted spaces, as long $i$ is sufficiently
large. We will show this is not possible by applying the local uniqueness result via the implicit function theorem on weighted norm space established in \cite{GL, 
Lee1}. We now give more details.
\\

We denote $\psi_i^* g^+_i$ by $ g^+_i$ and $\phi_i^*h^+_i$ by $h^+_i$. We denote Fefferman-Graham's compactification $g_{i}^*=\rho_{i}^2 g_{i}^+$ and  $h_{i}^*=\tilde\rho_{i}^2 h_{i}^+$ where $\log\rho_{i}$ and $\log \tilde \rho_{i}$ solve (\ref{fg}).  Let us consider the nonlinear functional on $4$-dimensional ball $B^4$ introduced by Biquard \cite{Biquard1} for two metrics $g^+$ and $t^+$.  
\beq
 F(g^+,t^+):=Ric[g^+]+3g^+- \delta_{g^+}(B_{t^+}(g^+)),
\eeq
where $B_{t^+}(g^+)$ is a linear condition, essentially the infinitesimal version of the previous harmonicity condition
$$
B_{t^+}(g^+):=\delta_{t^+} g^++\frac12 d\tr_{t^+}(g^+).
$$
We have for any asymptotically hyperbolic Einstein metrics $h^+$
$$
D_1 F(h^+,h^+)=\frac12(\triangle_L+6),
$$
where $D_1$ denotes the differentiation of $F$ with respective to its first variable, and where the Lichnerowicz Laplacian $\triangle_L$ on symmetric $2$-tensors  is given by
$$
\triangle_L:=\nabla^*\nabla[g^+] +2\overset{\circ}{Ric}[g^+]-2\overset{\circ}{Rm}[g^+]; 
$$
where 
$$
\overset{\circ}{Ric}[g^+](u)_{ij}=\frac{1}{2}(R_{ik}[g^+]{u_j}^k+R_{jk}[g^+]{u_i}^k), 
$$
and
$$
\overset{\circ}{Rm}[g^+](u)_{ij}=R_{ikjl}[g^+]u^{kl}.
$$
It is clear
$$
 F(g_{i}^+, g_{i}^+)=0
$$
We now divide the proof in 2 steps.
\\

{\sl Step 1. } {\bf Claim.} {\it We could find a  diffeomorphism $\varphi_i$  of class $C^{3,\alpha}$ (equal to the identity on the boundary), such that 
$$
 F(\varphi_i^* h_{i}^+, g_{i}^+)=0
$$
Moreover $\|\varphi_i(x)-x\|_{C^{3,\alpha}}\to 0$ and $\|\varphi_i^* h_{i}^+-g_{i}^+\|_{C^{2,\alpha}_\delta}\to 0$ for some $\delta\in (2+\alpha,3)$.}
\vskip .1in
It is sufficient to check this infinitesimally: the diffeomorphism group acts infinitesimally on $g_i^+$ by taking the covector field $X_i$ to the symmetrized covariant derivative $(\delta_{g_i^+})^*X_i$, so the problem to solve is
$$
B_{g_{i}^+} ((\delta_{g_{i}^+})^*X_i) = -B_{g_{i}^+}(h_{i}^+).
$$
Recall
$$
B_{g_{i}^+} (\delta_{g_{i}^+})^* = \frac 12((\nabla )^*\nabla [g_{i}^+] -Ric[g_{i}^+] ),
$$
On the other hand, a direct calculation leads to  (Proposition 2.5 \cite{GL})
$$
B_{g^+}(h^+)=\rho^{-1}{\mathcal E}^0(h,g)+{\mathcal E}^1(h,g)
$$
where $h=r^2 h^+$ and $g=r^2 g^+$ for some defining function $r$, ${\mathcal E}^m$  denotes any tensor whose components in any coordinate system smooth up to the boundary of the $g,g^{-1}, h,h^{-1}$ and their partial derivatives such that in each term the total number of derivatives of $g$ and $h$ that appear is at most $m$. More precisely, we have
$$
\begin{array}{lll}
\ds {\mathcal E}^0(h,g)_m=-h^{jk}(g_{jk}r_m-4h_{mj}r_k),\\
\ds {\mathcal E}^1(h,g)_m=-h^{jk}\p_kg_{mj}-\Gamma(h)^l_{mk}g_{lj}-\Gamma(h)^l_{jk}g_{lm}+\frac12\p_m(h^{jk}g_{jk})
\end{array}
$$
If there is no confusion, we drop the index $i$ for the metrics $g_i^+, h_{i}^+, g_{i}^*, h_{i}^*$, the covector field $X_i$.  In view of Theorem \ref{weighted-conv}, we note $B_{g^+}(h^+)=B_{g^+}(h^+-g^+)\in C^{1,\alpha}_{\delta}$ for all $\delta\in (0,3)$. Moreover, $B_{g^+_i}(h^+_i)\to 0$ in $C^{1,\alpha}_{\delta}$. We consider a $C^1$ fully nonlinear operator $\Psi$ for $\delta\in (2,3)$
$$
\begin{array}{lllll}
\Psi:& {C^{3,\alpha}_{\delta}( B^4;TB^4)}&\to&   {C^{1,\alpha}_{\delta}( B^4;T B^4)} \\
&\tilde X&\mapsto& \widetilde{B_{g^+}(exp(\tilde X)^*h^+)}
\end{array}
$$
where $exp$ is the exponential map and $\tilde B$ is a vector field related to the one form $B$. We know 
$2d\Psi(0)=\nabla^*\nabla +3$. It follows from Theorem C \cite{Lee1} that $d\Psi(0): C^{3,\alpha}_{\delta}(B^4;T B^4)\to C^{1,\alpha}_{\delta}(B^4;T B^4)$ is an isomorphism provided $2<\delta<3$.  Applying inverse functions theorem, for large $i$, we find $\tilde X_i\in C^{3,\alpha}_{\delta}$ such that 
$$
\Psi(\tilde X_i)=0.
$$
Again from Lemma 3.7 \cite{Lee1}, we have $C^{3,\alpha}_{\delta} ( B^4;T B^4) \subset C^{3,\alpha}_{2+\alpha} ( B^4;T B^4)\subset C^{3,\alpha}_{(0)} ( \bar B^4;T \bar B^4)$ provided $\delta>2+\alpha$. Thus, we  find a  diffeomorphism $\varphi_i=\exp(\tilde X_i)$  of class $C^{3,\alpha}$ (equal to the identity on the boundary), such that 
$$
 F(({\varphi_i})_*h_{i}^+, g_{i}^+)=0. 
$$ 
Moreover,  $\|\varphi_i(x)-x\|_{C^{3,\alpha}}\to 0$ and $\|\varphi_i^* h_{i}^+-g_{i}^+\|_{C^{2,\alpha}_\delta}\to 0$ for $\delta\in (2+\alpha,3)$. Thus we proved the claim.
\\

{\sl Step 2. } {\bf Claim.} {\it For large $i$, we have 
$$
g_{i}^+=\varphi_i^*h_{i}^+.
$$
}\\
We know $ F(\varphi_i^*h_{i}^+,g_{i}^+)=  F(g_{i}^+,g_{i}^+)=0$ and by step 1 $\|g_{i}^+-\varphi_i^*h_{i}^+\|_{C^{2,\alpha}_\delta}\to 0$ for $\delta\in (2+\alpha,3)$. On the other hand, using \cite[Theorems C and D]{Lee1}  and \cite[Lemma 12.71]{Besse}, the linearized operator $$D_1  F( g_{i}^+, g_{i}^+):  C^{2,\alpha}_{\delta}(  B^4;\Sigma^2   B^4)\to  C^{0,\alpha}_{\delta}(  B^4;\Sigma^2   B^4)$$ is an isomorphism. Applying the implicit function theorem, we infer the claim.

\end{proof}

\end{document}